\documentclass[11pt, a4paper]{amsart}

\usepackage{amsfonts,amsmath,amssymb,amsthm,color,enumerate,cite}
\usepackage{a4wide}

\newtheorem{lemma}{Lemma}[section]
\newtheorem{theorem}[lemma]{Theorem}

\newtheorem{proposition}[lemma]{Proposition}

\def\XXint#1#2#3{{\setbox0=\hbox{$#1{#2#3}{\int}$ }
\vcenter{\hbox{$#2#3$ }}\kern-.6\wd0}}

\makeatletter
\newcommand*{\mint}[1]{%
  \mint@l{#1}{}%
}
\newcommand*{\mint@l}[2]{%
  \@ifnextchar\limits{%
    \mint@l{#1}%
  }{%
    \@ifnextchar\nolimits{%
      \mint@l{#1}%
    }{%
      \@ifnextchar\displaylimits{%
        \mint@l{#1}%
      }{%
        \mint@s{#2}{#1}%
      }%
    }%
  }%
}
\newcommand*{\mint@s}[2]{%
  \@ifnextchar_{%
    \mint@sub{#1}{#2}%
  }{%
    \@ifnextchar^{%
      \mint@sup{#1}{#2}%
    }{%
      \mint@{#1}{#2}{}{}%
    }%
  }%
}
\def\mint@sub#1#2_#3{%
  \@ifnextchar^{%
    \mint@sub@sup{#1}{#2}{#3}%
  }{%
    \mint@{#1}{#2}{#3}{}%
  }%
}
\def\mint@sup#1#2^#3{%
  \@ifnextchar_{%
    \mint@sub@sup{#1}{#2}{#3}%
  }{%
    \mint@{#1}{#2}{}{#3}%
  }%
}
\def\mint@sub@sup#1#2#3^#4{%
  \mint@{#1}{#2}{#3}{#4}%
}
\def\mint@sup@sub#1#2#3_#4{%
  \mint@{#1}{#2}{#4}{#3}%
}
\newcommand*{\mint@}[4]{%
  \mathop{}%
  \mkern-\thinmuskip
  \mathchoice{%
    \mint@@{#1}{#2}{#3}{#4}%
        \displaystyle\textstyle\scriptstyle
  }{%
    \mint@@{#1}{#2}{#3}{#4}%
        \textstyle\scriptstyle\scriptstyle
  }{%
    \mint@@{#1}{#2}{#3}{#4}%
        \scriptstyle\scriptscriptstyle\scriptscriptstyle
  }{%
    \mint@@{#1}{#2}{#3}{#4}%
        \scriptscriptstyle\scriptscriptstyle\scriptscriptstyle
  }%
  \mkern-\thinmuskip
  \int#1%
  \ifx\\#3\\\else_{#3}\fi
  \ifx\\#4\\\else^{#4}\fi
}
\newcommand*{\mint@@}[7]{%
  \begingroup
    \sbox0{$#5\int\m@th$}%
    \sbox2{$#5\int_{}\m@th$}%
    \dimen2=\wd0 %
    \let\mint@limits=#1\relax
    \ifx\mint@limits\relax
      \sbox4{$#5\int_{\kern1sp}^{\kern1sp}\m@th$}%
      \ifdim\wd4>\wd2 %
        \let\mint@limits=\nolimits
      \else
        \let\mint@limits=\limits
      \fi
    \fi
    \ifx\mint@limits\displaylimits
      \ifx#5\displaystyle
        \let\mint@limits=\limits
      \fi
    \fi
    \ifx\mint@limits\limits
      \sbox0{$#7#3\m@th$}%
      \sbox2{$#7#4\m@th$}%
      \ifdim\wd0>\dimen2 %
        \dimen2=\wd0 %
      \fi
      \ifdim\wd2>\dimen2 %
        \dimen2=\wd2 %
      \fi
    \fi
    \rlap{%
      $#5%
        \vcenter{%
          \hbox to\dimen2{%
            \hss
            $#6{#2}\m@th$%
            \hss
          }%
        }%
      $%
    }%
  \endgroup
}

\title [] 
 {Homogenization of a mean field game system  in the small noise limit}


\author
{Annalisa Cesaroni$^{1}$}\thanks{$^{1}$ Partially supported by Indam-Gnampa and by Progetto di Ateneo ``Traffic flow on networks: analysis and control''}\address{Annalisa Cesaroni, Padova University, Dip. Statistical Sciences, via C. Battisti 241 35131 Padova, ITALY, \\ {\tt annalisa.cesaroni@unipd.it}} \author{Nicolas Dirr$^{2}$}\thanks{$^{2}$ Partially supported by the University of Padova, the Leverhulme Trust RPG-2013-261 and EPSRC EP/M028607/1} \address{Nicolas Dirr, Cardiff University, School of Mathematics, Senghennydd Road, Cardiff, UK, CF24 4AG, {\tt DirrNP@cardiff.ac.uk}}  \author{Claudio Marchi$^{1}$}
\address{Claudio Marchi, Padova University, Dip. Ingegneria dell'Informazione, via Gradenigo 6/b, 35131 Padova, ITALY,  {\tt claudio.marchi@unipd.it}} 

\begin{document}
\maketitle
\noindent{\bf Abstract.}   This paper concerns the simultaneous effect of homogenization and of the small noise limit for a $2^{\textrm {nd}}$ order mean field games (MFG) system with local coupling and quadratic Hamiltonian.  We show under some additional assumptions that the solutions of our system converge to a solution of an effective $1^{\textrm {st}}$ order system whose effective operators are defined through a cell problem which is a $2^{\textrm {nd}}$ order system of ergodic  MFG type. We provide several properties of the effective operators and we show that in general the effective system loses the MFG structure.

\section{Introduction}
We investigate evolutive mean field games (briefly, MFG) systems in the small noise limit when the Hamilton-Jacobi equation has a rapidly varying dependence on the state variable $x$, namely, 
\begin{equation}\label{systemintro}\begin{cases}
-u_t^\epsilon-\epsilon\Delta u^\epsilon +\frac{1}{2}|\nabla u^\epsilon|^2=V\left(\frac{x}{\epsilon}, m^\epsilon \right), & x\in \mathbb{R}^n, t\in (0,T)\\
m^\epsilon_t-\epsilon \Delta m^\epsilon-{\rm div}(m^\epsilon\nabla u^\epsilon)=0,& x\in \mathbb{R}^n, t\in (0,T)
\end{cases}\end{equation}\rm
with  initial and terminal condition $u^\epsilon(x,T)=u_0(x)$ and $m^\epsilon(x,0)=m_0(x)$.

MFG systems have been introduced by Lasry and Lions in \cite{ll1, ll2,ll} in the study of the overall behaviour of a large population of (rational and indistinguishable) individuals in markets, crowd motion, etc. In their approach, in a population of $N$ agents, each single agent is driven by a dynamics perturbed by a random noise and aims to minimize some cost functional  which depends only on the empirical distribution of all other players. The Nash equilibria are characterized by a system of $2N$ equations. According to Lasry and Lions, as $N\to +\infty$, this system of PDEs reduces to the system~\eqref{systemintro} with $\epsilon=1$ where the first equation gives the value function associated to the ``average'' player while the second equation describes the evolution of the distribution of players. The rigorous proof of this limit behaviour has been established by Lasry and Lions in~\cite{ll} for ergodic differential games whereas the evolutive case with nonlocal coupling has been addressed in some recent preprints (\cite{clld,f,la}). 

This approach has been generalized in several directions: long time behaviour (\cite{cllp1,cllp2}), first order systems (\cite{c, cg}), ergodic MFG systems (\cite{be,gps}). For a general overview, we refer the reader to \cite{noteachdou, notecard, gn, notegomes}.

Homogenization of a single PDE (and of systems as well) has been investigated exhaustively.
A summary of the vast literature on this topic is beyond the aim of this paper. Let us only recall from~\cite{blp} that, for a single semilinear equation as the first one in~\eqref{systemintro}, the expansion 
\[u^\epsilon=u^0(x,t)+\epsilon u(x/\epsilon)\]
provides (at least formally) the right ``guess'' for the cell problem (a single ergodic equation) and for the effective Hamilton-Jacobi equation as $\epsilon\to 0^+$. On the other hand, let us recall (see ~\cite{ps}) that, for the homogenization of a Fokker-Planck  equation in the small noise limit as the second equation in~\eqref{systemintro}, the multiplicative formal expansion 
\[m^\epsilon=m^0(t,x)(m(x/\epsilon)+\epsilon m^2(x/\epsilon))\]
takes into account the fact that we expect just weak convergence of the solutions, and provides the right guess for the cell problem and the effective continuity equation.

In this paper we consider  periodic  homogenization of a MFG system under the simultaneous effect of the small noise limit; in other words, we tackle the limit as $\epsilon\to 0$ to the solutions to~\eqref{systemintro}.  This system appears in the limit as $N\to +\infty$ of Nash equilibria for a population of $N$ agents. The dynamics of the $i$-th  agent is  given by 
\[dX^i_t= a_t dt+ \sqrt{\epsilon/2}dW_t\] where $W_t$ is a standard Brownian motion  and $a$ is the control chosen in $\mathbb{R}^n$ in order to minimize the  cost
\[L(x, t, a)= \mathbf{E}\int_0^t \left[\frac{|a_s|^2}{2}+V\left(\frac{X^i_s}{\epsilon}, \sum_{j} \delta_{X_{s}^j}\right)\right]ds,\] 
where we  choose for sake of simplicity the standard affine-quadratic dependence on the control. 
This framework models the case of a differential game which takes place in a  environment with heterogeneities of period $\epsilon$, and with dynamics disturbed at microscopic level by white noise. From a  mathematical point of view,  this scaling (where the viscosity has the same order of magnitude than the size of the heterogeneities) is the most interesting case.  We refer to  Section~\ref{formalexp} for a short discussion of other type of scaling  between the viscosity and the period of the running cost. 
As far as we know, it is the first time that this kind of problems has been considered  in the literature, even if, while completing this work, we became aware of a work in progress on homogenization of MFG system by  P.-L. Lions and P.E.  Souganidis \cite{ls}.

The homogenization limit is interesting as a mathematical question, but may also find applications like e.g. in traffic-flow problems.
Mean field games have widely been used to model traffic flow problems where the cost is the higher the more dense the traffic is. See for example \cite{BDMW},  \cite{CLM}, \cite{LW}  and references therein. 
Changing road conditions - e.g. hills and valleys or a change in the number of lanes-  influences the cost functional, resulting in a spatially varying pre-factor for the cost depending on local traffic density. If the typical distance between cars is much smaller than the scale on which the road conditions vary, then the behavior on a scale which is in turn much larger than the scale of variation of the road conditions could be described by a homogenized mean field game system.
We would like to point out that this observation is only a motivation for the problem under consideration here; applications of homogenization to mean field game systems modeling traffic flow are beyond the scope of this paper.

Because of the small noise limit considered here, the effective system is expected to be a first order system formed by a Hamilton-Jacobi equation and a  transport equation, where the effective Hamiltonian and the effective drift need to be suitably defined. More precisely, in our case, the limit system will be 
\begin{equation}\label{systemeffintro}\begin{cases}
-u_t^0+\bar H(\nabla u^0, m^0)=0, & x\in \mathbb{R}^n, t\in (0,T)\\
m^0_t-{\rm div}(m^0\bar b(\nabla u^0, m^0))=0,& x\in \mathbb{R}^n, t\in (0,T)
\end{cases}\end{equation} with initial/terminal condition $u^0(x,T)=u_0(x)$ and $m^0(x,0)=m_0(x)$.
The effective operators $\bar H(P,\alpha), \bar b(P,\alpha)$  appearing in the limit system are obtained through the following  cell problem. For every  $P\in \mathbb{R}^n$, $\alpha\geq 0$,  find the constant  $\bar H(P,\alpha)$ for which there exists a solution to    the   ergodic mean field game system:  
\begin{equation}\label{ucorintro} \begin{cases}
(i)   -\Delta u     \!+\!\frac{1}{2}|\nabla u  \!+\!P|^2\!-\!V(y, \alpha m  ) =
 \bar  H(P,\alpha), & y\in\mathbb{T}^n\\
(ii)   -\Delta m  -{\rm div}\left(m \left(\nabla u +P\right)\right)=0, & y\in\mathbb{T}^n\\ (iii)  \int_{\mathbb{T}^n} u =0 \qquad  \int_{\mathbb{T}^n} m =1, & \end{cases}
\end{equation} 
while $\bar b$ is given by  \begin{equation}\label{bbar} \bar b(P,\alpha)= \int_{\mathbb{T}^n} (\nabla u +P) m dy,\end{equation} where $(u,m)$ is the  solution to \eqref{ucorintro}.
We observe that, with this choice of small noise and local coupling, the cell problem has a MFG structure. On the other hand, in the case of homogenization of MFG systems without small noise or with nonlocal smoothing term $V$, we expect a cell problem almost decoupled: equation (i) in \eqref{ucorintro} does not depend anymore on $m$, so we can solve it as a standard ergodic equation in $\mathbb{T}^n$. In the case of strong noise, i.e. a second order part of the form $\Delta u$ we expect an explicit formula for the effective Hamiltonian $\bar H(P,\alpha)=\frac{1}{2}|P|^2\!-\int_{\mathbb{T}^n}\!V(y, \alpha)dy$ (see Section~\ref{formalexp}). 

The purpose of this paper is twofold: a study of the effective operators $\bar H$ and $\bar b$ and a convergence result for problem~\eqref{systemintro}. We provide the following properties of $\bar H$ and $\bar b$: local Lipschitz continuity, monotonicity in $\alpha$, coercivity in $P$ of $\bar H$ and asymptotic behaviour of $\bar H$ and $\bar b$ with respect to $P$. As a matter of fact, we establish the following formula
\[\nabla_P \bar H(P, \alpha)=  \bar b(P,\alpha)-\int_{\mathbb{T}^n} V_m(y,\alpha m)\alpha \tilde m m dy \]
where $m$ is the solution to~\eqref{ucorintro} while $\tilde m$ is a function with values in $\mathbb{R}^n$ defined as a solution of a suitable system (see problem~\eqref{mvarP}) and, roughly speaking, coincides with $\nabla_P m$.

The interesting feature is that the limit system \eqref{systemeffintro} may lose the mean field game structure because  $\nabla_P \bar H(P,\alpha)$ may not coincide with $\bar b(P, \alpha)$. We provide an explicit example where this phenomenon appears.

We provide the rigorous convergence result of the solution $(u^\epsilon, m^\epsilon)$ to \eqref{systemintro} to a solution $(u^0, m^0)$ of~\eqref{systemeffintro} respectively strongly in $L^2$ and weakly in $L^1$,  in the following special case: the initial/terminal data are affine for the Hamilton-Jacobi equation and constant for the Fokker-Planck equation, that is
\[ u_0(x)=P\cdot x\qquad m_0(x)\equiv 1, \qquad P\in \mathbb{R}^n.\]
The result is obtained by getting careful apriori estimates on the difference, appropriately rescaled, between the exact solution of the perturbed problem   \eqref{systemintro} and a first order asymptotic expansion. We discuss also a conditional convergence result, under some  additional restrictive conditions.

This paper is organized as follows. In Section \ref{sect:assumpt} we list the standing assumptions and establish the well posedness of system \eqref{systemintro}. In Section \ref{formalexp}, we heuristically provide the effective limit system and we also discuss the cases of alternative asymptotic expansion, nonlocal coupling, and strong noise. In Section \ref{sect:CP}, we define the effective operators, we show their continuity and we provide a variational characterization, coercivity and monotonicity of $\bar H$. Section \ref{sect:regularity} is devoted to local Lipschitz continuity of effective operators and to the computation of $\nabla_P\bar H$. Section \ref{sect:qual} contains the asymptotic behaviour with respect to $P$ and an explicit example where the limit system loses the MFG structure. Finally, in Section \ref{sect:affine}, we prove   the homogenization result for affine constant initial data and in   Section \ref{sect:general} we discuss some further perspectives.

\section{Standing assumptions}\label{sect:assumpt}
In this Section, we collect the assumptions on system~\eqref{systemintro} that will hold throughout all the paper unless it is explicitely stated in a different manner.


\noindent
{\bf Assumptions on the  potential $V$}. \\ We will assume that \[V(y,m):\mathbb{T}^n\times\mathbb{R}\to \mathbb{R}\]
\begin{itemize}\item is a $C^1$ bounded function and w.l.o.g. $V\geq 0$,
\item is monotone increasing with respect to $m$, that is, for every compact interval $K\subset\mathbb{R}$, there exists a positive constant $\gamma_K>0$ such that  
\begin{equation} \label{incr}
V(y, m)-V(y,n)\geq \gamma_K (m-n)\qquad \forall m, n\in K, y\in \mathbb{T}^n.\end{equation}
\end{itemize} 

\noindent
{\bf Assumptions on the initial and terminal data}.\\$u_0\in C^2(\mathbb{T}^n)$. 
$m_0$ is a   smooth nonnegative function on $\mathbb{T}^n$, such that $\int_{\mathbb{T}^n}m_0(x)dx=1$. 

\vskip2mm

We conclude recalling some well known results about existence and uniqueness of the solution to the mean field game system.
\begin{proposition}\label{ex}
For $\epsilon=\frac{1}{k}$ ($k\in\mathbb{N}$), there exists a unique classical solution to  \eqref{systemintro}.
\end{proposition}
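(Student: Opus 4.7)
The statement is a standard well-posedness result for a second-order, local, quadratic MFG system with bounded, monotone potential. My plan is to follow the classical scheme: existence by a Schauder fixed-point iteration exploiting the Hopf--Cole linearization of the Hamilton--Jacobi equation, and uniqueness by the Lasry--Lions monotonicity identity.

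For existence I fix $\eps>0$ and consider the convex set $\KK$ of nonnegative $m\in C^{\alpha/2,\alpha}(\T^n\times[0,T])$ with $\int_{\T^n}m(\cdot,t)\,\dx=1$ and a uniform H\"older bound. Given $m\in\KK$, the Hopf--Cole substitution $\phi=e^{-u/(2\eps)}$ transforms the first equation of \eqref{systemintro} into the linear backward parabolic problem
\[
-\phi_t-\eps\Delta\phi+\tfrac{1}{2\eps}V(x/\eps,m)\phi=0,\qquad \phi(\cdot,T)=e^{-u_0/(2\eps)},
\]
whose zero-order coefficient is bounded since $V$ is. Classical parabolic Schauder theory then yields a positive $\phi\in C^{1+\alpha/2,2+\alpha}$, and hence a classical $u$, with an estimate depending only on $\|V\|_\infty$, $\|u_0\|_{C^2}$, $T$ and $\eps$; in particular, independent of $m\in\KK$. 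With $\grad u$ now Lipschitz in $x$ and H\"older in $t$, the linear divergence-form Fokker--Planck problem
\[
\tilde m_t-\eps\Delta\tilde m-{\rm div}(\tilde m\grad u)=0,\qquad \tilde m(\cdot,0)=m_0,
\]
admits a unique classical solution, which stays nonnegative by the maximum principle and preserves mass by integration by parts. The resulting map $\TT\colon m\mapsto\tilde m$ sends $\KK$ into itself, is continuous in $C^{\alpha/2,\alpha}$ and compact by the gain of H\"older regularity, so Schauder's fixed-point theorem delivers a classical solution of \eqref{systemintro}.

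For uniqueness, let $(u_i,m_i)$, $i=1,2$, be two classical solutions and set $w=u_1-u_2$, $\mu=m_1-m_2$. Computing $\frac{d}{dt}\int_{\T^n}w\,\mu\,\dx$ from both equations, integrating by parts (the $\eps$-viscosity terms cancel), and using the algebraic identity
\[
\tfrac12\bigl(|\grad u_1|^2-|\grad u_2|^2\bigr)(m_1-m_2)-\grad w\cdot(m_1\grad u_1-m_2\grad u_2)=-\tfrac12(m_1+m_2)|\grad w|^2,
\]
I integrate over $[0,T]$ and exploit $w(\cdot,T)=0$, $\mu(\cdot,0)=0$ to obtain the Lasry--Lions identity
\[
\int_0^T\!\!\int_{\T^n}\bigl(V(x/\eps,m_1)-V(x/\eps,m_2)\bigr)(m_1-m_2)\,\dx\,dt+\tfrac12\int_0^T\!\!\int_{\T^n}(m_1+m_2)|\grad w|^2\,\dx\,dt=0.
\]
Both integrands are nonnegative, the first by the strict monotonicity assumption \eqref{incr} applied on a compact interval containing the (bounded) ranges of $m_1,m_2$. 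Hence both terms vanish; monotonicity forces $m_1\equiv m_2$, whereupon the two Hamilton--Jacobi problems coincide and $u_1\equiv u_2$ by the comparison principle.

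The only delicate point is making the Schauder estimates used in the fixed-point iteration uniform in $m\in\KK$, which is essentially automatic here thanks to the Hopf--Cole reduction and the global boundedness of $V$. For this reason the proposition may equivalently be obtained by directly invoking the well-posedness theory for local, quadratic, second-order MFG systems already available in the literature (e.g.\ \cite{ll, notegomes, noteachdou}).
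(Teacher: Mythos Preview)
Your proof is correct and follows essentially the same approach as the paper: existence via the Hopf--Cole linearization $\phi=e^{-u^\eps/(2\eps)}$ (reducing the HJB equation to a linear parabolic problem) combined with a Schauder fixed-point argument, and uniqueness via the Lasry--Lions monotonicity identity. The paper's own proof simply references \cite[Lemma~4.2]{cllp1} for existence and \cite{ll}, \cite[Thm~3.8]{notecard} for uniqueness, so your write-up is in fact a more detailed expansion of exactly that strategy.
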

\begin{proof}
For the existence, we follow the arguments in \cite[Lemma 4.2]{cllp1}. To this end, it is sufficient to observe that the function $w^\epsilon:=\exp\{-\frac{u^\epsilon}{2\epsilon}\}$ satisfies the linear problem
\[
-w^\epsilon_t-\epsilon \Delta w^\epsilon+\frac{V w^\epsilon}{2\epsilon}=0.
\]
As for uniqueness, we refer to \cite{ll} and \cite[Thm 3.8]{notecard}. 
\end{proof}

\section{Formal asymptotic expansions}\label{formalexp}
In this section we formally derive the cell problem and the effective equation by the method of asymptotic expansions. More precisely, we make the formal ansatz that the solution to the system \eqref{systemintro} satisfies the following asymptotic expansion up to first order in the small parameter~$\epsilon$:
\begin{equation}\label{as}\begin{cases}
u^\epsilon(x,t)=u^0(x,t)+\epsilon u \left(\frac{x}{\epsilon}\right),\\
m^\epsilon(x,t)=m^0(x,t)\left(m \left( \frac{x}{\epsilon}\right)+\epsilon m^2\left(\frac{x}{\epsilon}\right)\right),
\end{cases}\end{equation}
where $m$ and $m^2$ are assumed to be periodic with average respectively $1$ and $0$.

We insert this asymptotic expansion in \eqref{systemintro}, denoting by $P=\nabla u^0$, $\alpha=m^0$ and  $y=\frac{x}{\epsilon}$.

At order $\epsilon^{-1}$  in the second equation, we get 
\begin{equation}\label{corrector} -\Delta_y m -{\rm div}_y(m (P+	\nabla_y u )) =0. \end{equation}

Note that a positive solution to \eqref{corrector}, for $P$, and  $\nabla_y u $ measurable and bounded exists according to \cite[Thm 4.3, pag 136]{bbook}.


Then we collect the terms of order $\epsilon^0$ in the expansion of the $m$- equation 
and  get
\begin{multline}  \label{m2}
m^0(-\Delta_y m^2-{\rm div}_y(m^2(P+\nabla_y u )))\\ = 
-(m m^0)_t-{\rm div}_x(m^0m (P+\nabla_y u ))
+ 2\nabla_x m_0\cdot\nabla_y m. 
\end{multline}
Observe that if $m^2$ is a solution, then  $m^2+km$ is still a solution of the same equation for every $k\in\mathbb{R}$, which allows us to satisfy the mean zero constraint for $m^2.$

The solvability condition for $m^2$ gives (integrating the fast variable $y$ over $\mathbb{T}^n$, while treating the slow variable $x$ as a parameter)
\[m^0_t-{\rm div}_x\left(m^0\left(\int_{\mathbb{T}^n} m (P+\nabla_y u )dy\right)\right)=0\]
which is the expected limit equation for $m^0$. Here we used that $m$ has average equal to $1$ and that the other two terms average to $0$  since $m , u $ are assumed to be periodic in $y$ so the average of $\nabla m  $ is $0$.

Inserting the asymptotic expansion ansatz  in the first equation (the equation for $u$) in \eqref{systemintro}, we get
that the terms of order $\epsilon^0$ are
\[-u^0_t-\Delta_y u    +\frac{1}{2}|\nabla_y u +\nabla_x u^0|^2\!-\!V(y, m^0 m )=0, \]
which gives the formula for the effective Hamiltonian: for every $P\in\mathbb{R}^n$ and $\alpha\geq0$, $\bar H(P,\alpha)$ is
the unique constant for which there exists a periodic solution $u =u(y)$ to
\[-\Delta u  +\frac{1}{2}|\nabla u +P|^2-V(y, \alpha m  )=
\bar  H(P,\alpha),\] and $u_t=\bar H (\nabla u_0,\alpha)$ has to hold.

Summing up, the cell problem is the ergodic mean field game system 
\[ \begin{cases}\label{ergcellu}
 -\Delta u     \!+\!\frac{1}{2}|\nabla u  \!+\!P|^2\!-\!V(y, \alpha m  ) =
 \bar H(P,\alpha)\\
 -\Delta m  -{\rm div}\left(m \left(\nabla u +P\right)\right)=0\\
\int_{\mathbb{T}^n}u=0,\quad \int_{\mathbb{T}^n}m=1. \end{cases}
\]
This cell problem permits to define the effective operators $\bar H(P, \alpha)$ and 
\[\bar b(P,\alpha)= \int_{\mathbb{T}^n} m (\nabla u+P)dy.
\]
So the expected limit system is \eqref{systemeffintro}. 

\subsection{Other asymptotic expansions}

\subsubsection{Time-dependent asymptotic expansion and $\epsilon$-correction to  initial data}\label{timedep}
We also consider a fast time-scale dependence in the asymptotic expansion. This allows for order $\epsilon$-corrections to the  initial/terminal data and leads to a different (time-dependent) cell-problem but the same system of effective equations.
More precisely, consider the ansatz
\begin{equation*}
\begin{cases} u^\epsilon(x,t)=u^0(x,t)+\epsilon u \left(\frac{t}{\epsilon},\frac{x}{\epsilon}\right)\\
m^\epsilon(x,t)=m^0(x,t)\left(m\left(\frac{t}{\epsilon}, \frac{x}{\epsilon}\right)+\epsilon m^2\left(\frac{t}{\epsilon}, \frac{x}{\epsilon}\right)\right).
\end{cases}\end{equation*}
Then, at the highest order (i.e. at order $\epsilon^0$ for the $u$-equation and at order $\epsilon^{-1}$ for the $m$-equation) we get  
the new parabolic cell problem
\begin{equation}\label{paracell}
\begin{cases}
c-u_\tau-\Delta_y u    +\frac{1}{2}|\nabla_y u +P|^2 - V(y, \alpha m)=0\\
 m_\tau-\Delta_y m -{\rm div}_y(m(P+\nabla_y u))=0
\end{cases}
\end{equation}
with $c\equiv- u^0_t$, $P\equiv \nabla_x u^0$ and $\alpha\equiv m^0$.
The effective equation for $u^0$ is determined through solvability conditions for \eqref{paracell}. Note that $u^\epsilon$ will converge   to $u^0$ if 
\begin{equation}\label{ucond}
\lim_{\epsilon\to 0}\epsilon u\left( \frac{t}{\epsilon},\frac{x}{\epsilon}\right)= 0,\quad
\end{equation}
 whereas $m^\epsilon$ converges weakly  to $m^0$ if, at least,  $m$ belongs to some $L^p(\mathbb{T}^n\times (0, +\infty))$ space. Then there exists a unique constant $\bar H(P,\alpha)$ such that \eqref{paracell} admits a solution with these properties and $u^0_t=\bar H(\nabla_x u^0, m^0)$. 

This $\bar H$ is expected to be  the same as the one in \eqref{ergcellu} by the analysis of the long time behaviour of \eqref{paracell}, which is similar to the result in \cite{cllp1}. More precisely, one can expect that  $\epsilon u\left(\frac{t}{\epsilon},y\right) \to (1-t)\lambda$,  uniformly in $y$ and 
$t\in (0,1)$, where $\lambda$ is the unique constant for which there exists a periodic solution to the system 
\[\begin{cases}\label{cllp1}
 \lambda-\Delta_y u    +\frac{1}{2}|\nabla_y u +P|^2 - \widetilde{V}(y, \alpha m)=0\\
-\Delta_y m -{\rm div}(m(P+\nabla_y u))=0,
\end{cases}\]
where $\widetilde V(y, \alpha m):= V(y, \alpha m)-c$. Choosing $c$ as the unique constant for which \eqref{ergcellu} has a solution, by uniqueness of the ergodic constant, there holds $\lambda=0$ (see \cite{cllp1}).  Finally, we get \eqref{ucond} for solutions of \eqref{paracell} and, as $c= u^0_t$, we recover the limiting $u$-equation in \eqref{systemeffintro}.
On the other hand, again by  \cite{cllp1}, we expect   that  $m\left(y,\frac{t}{\epsilon} \right)$ converges weakly to $m(y)$ in some $L^p$ space.

The limit equation in $m$ was found as a solvability condition for $m^2,$ which now should be bounded in $\mathbb{T}au,$ a necessary condition for $m(x/\epsilon,t/\epsilon)$  to converge weakly  in some $L^p$ space. As the equation at order $\epsilon^0$ is now of the form
$$
 m^2_\mathbb{T}au=\Delta_y m^2 +R,
$$
where $R$ contains all nonlinear terms of order zero, we see that the solution can only be periodic in $y$ and bounded as $\mathbb{T}au\to\infty$  if the mean of $R$ vanishes,
which yields the same effective equation as before.

\subsubsection{Case of finite noise}
We consider the mean field game system 
\begin{equation}\label{systemnonsmall}\begin{cases}
-u_t^\epsilon- \Delta u^\epsilon +\frac{1}{2}|Du^\epsilon|^2=V\left(\frac{x}{\epsilon}, m^\epsilon \right), & x\in \mathbb{R}^n, t\in (0,T)\\
m^\epsilon_t- \Delta m^\epsilon-{\rm div}(m^\epsilon\nabla u^\epsilon)=0,& x\in \mathbb{R}^n, t\in (0,T).
\end{cases}\end{equation}
In this case,  the natural  formal ansatz for the solution to the system \eqref{systemnonsmall} is
\begin{equation}\label{as2}
u^\epsilon(x,t)=u^0(x,t)+\epsilon^2 u \left(\frac{x}{\epsilon}\right),\quad
m^\epsilon(x,t)=m^0(x,t)\left(m \left( \frac{x}{\epsilon}\right)+\epsilon^2 m^2\left(\frac{x}{\epsilon}\right)\right).
\end{equation}
We insert this asymptotic expansion in \eqref{systemnonsmall}, denoting by $X=\Delta_x u^0$, $P=\nabla u^0$, $\alpha=m^0$ and  $y=\frac{x}{\epsilon}$.

At order $\epsilon^{-2}$  in the second  equation  we obtain 
$-\Delta m  =0$,  where $m (\cdot)$ is assumed to be    periodic and with mean 1. This implies $m\equiv 1,$ i.e. 
$m^\epsilon(x,t)=m^0(x,t)(1+\epsilon^2 m^2\left(\frac{x}{\epsilon}\right)),$ or, in other words, {\em strong} convergence of $m^\epsilon$ to $m^0$ can be expected.
Inserting the asymptotic expansion in the first equation in \eqref{systemnonsmall}, we get the following cell problem: for every $X\in M_n(\mathbb{R}^n)$, symmetric matrix,  $P\in\mathbb{R}^n$ and $\alpha\geq 0$, $\bar H( X, P,\alpha)$ is
the unique constant for which there exists a periodic solution $u( \cdot)$ to
\[-\Delta u  -{\rm tr} X+\frac{1}{2}|P|^2-V(y, \alpha)=
\bar H( X, P,\alpha).\]
So $\bar H( X,P,\alpha)= -{\rm tr} X+\frac{1}{2}|P|^2-\int_{\mathbb{T}^n}V(y, \alpha)dy$ (see \cite[Ch.2]{blp}).

On the other hand, the  solvability condition for $m^2$ gives the limit equation for $m^0$. 
In conclusion,  the expected effective system is 
\[\begin{cases} -u^0_t-\Delta u^0 +\frac{1}{2}|\nabla u^0|^2-\int_{\mathbb{T}^n}V(y, m^0)dy =0 \\ 
m^0_t-\Delta m^0-{\rm div} \left(m^0\nabla u^0\right)=0.\end{cases}\]
In this case the cell problem decouples, while  the limit problem still has a mean field game structure. Note that the limit coupling
$\bar V(m)=\int_{\mathbb{T}^n} V(y,m)dy$ satisfies \eqref{incr}. Hence,
the effective system fulfills uniqueness of solutions by a well known argument (see \cite{ll}).

  \subsubsection{Case of non local coupling}
Also in the case of non local coupling we expect that the cell problem decouples. 
We consider the following example to illustrate this issue (see \cite[Rem 2.10]{notecard}). We define  $\mathcal{L}(m)=w$ as    the periodic  solution to  
$-\Delta w=m-\int_{\mathbb{T}^n} m(x)dx$  with $w(0)=0$ and we consider 
\begin{equation}\label{systemnonlocal}\begin{cases}
-u_t^\epsilon- \epsilon\Delta u^\epsilon +\frac{1}{2}|Du^\epsilon|^2=V\left(\frac{x}{\epsilon},\mathcal{L}(m^\epsilon(\cdot,t))\right), & x\in \mathbb{T}^n, t\in (0,T)\\
m^\epsilon_t- \epsilon\Delta m^\epsilon-{\rm div}(m^\epsilon\nabla u^\epsilon)=0,& x\in \mathbb{T}^n, t\in (0,T).
\end{cases}\end{equation}

We consider the formal asymptotic expansion \eqref{as}. We add  the ansatz that also $w^\epsilon=\mathcal{L}(m^\epsilon)$ satisfies
the  asymptotic expansion which is the standard expansion for homogenization of second order equations 
\[w^\epsilon(x,t)=w^0(x,t)+\epsilon^2 w\left(\frac{x}{\epsilon}\right),\]
where $-\Delta w^0=m^0-1$, 
and  $w$ is the  periodic function with zero average such that $-\Delta w (y)= m^0(x)(m(y)-1)$. This implies that in the cell problem $m$ does not appear anymore in the first equation. So, the cell system is  
\[\begin{cases} 
 -\Delta u     \!+\!\frac{1}{2}|\nabla u  \!+\!P|^2\!-\!V(y, \alpha) =
 \bar  H(P,\alpha)\\
 -\Delta m  -{\rm div}\left(m \left(\nabla u +P\right)\right)=0,\\   \int_{\mathbb{T}^n} u =0 \qquad  \int_{\mathbb{T}^n} m =1, \end{cases} 
\] where $P=\nabla_x u^0$ and $\alpha=w^0$. 

In this case the expected limit system is given by 
\[\begin{cases}
-u_t^0+\bar H(\nabla u^0, \mathcal{L}(m^0(\cdot, t)))=0, & x\in \mathbb{R}^n, t\in (0,T)\\
m^0_t-{\rm div}(m^0\bar b(\nabla u^0, m^0))=0,& x\in \mathbb{R}^n, t\in (0,T).
\end{cases}\] 


\section{Cell problem, effective Hamiltonian and effective vector field.}\label{sect:CP}
This Section is devoted to the definition and the properties of the effective operators. In the first Proposition, we tackle the solvability of the cell problem obtained by the formal asymptotic expansion. 

\begin{proposition}\label{cellprop} Under the standing assumptions, 
for every  $P\in\mathbb{R}^n$ and $\alpha\geq  0$, consider the system
\begin{equation}\label{ucor} \begin{cases}
(i)  &  -\Delta u     \!+\!\frac{1}{2}|\nabla u  \!+\!P|^2\!-\!V(y, \alpha m  ) =
 \bar  H(P,\alpha) \qquad x\in \mathbb{T}^n\\
(ii)  & -\Delta m  -{\rm div}\left(m \left(\nabla u +P\right)\right)=0 \qquad x\in \mathbb{T}^n,\\ (iii) &  \int_{\mathbb{T}^n} u =0 \qquad  \int_{\mathbb{T}^n} m =1. \end{cases}
\end{equation}
Then there exists  a  unique constant $\bar  H(P,\alpha)$ such that the system 
admits a solution  $(u,m)$. Moreover this solution is unique and $u\in C^{2,\gamma}$, $m\in W^{1,p}$ for all $\gamma \in (0,1)$ and all $p>1$.

Finally there exists a constant $c>0$ such that $m\geq c>0$.

 \end{proposition}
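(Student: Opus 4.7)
The plan is a Schauder fixed-point argument in the variable $m$. Let $\K\subset L^2(\T^n)$ be the convex closed set of nonnegative functions $\bar m$ with $\int_{\T^n}\bar m=1$ and $\|\bar m\|_{L^\infty}\le M$, where $M$ will be fixed a posteriori from uniform estimates. Given $\bar m\in \K$, I first solve the ergodic Hamilton--Jacobi equation
\[
-\Delta u+\tfrac12|\nabla u+P|^2-V(y,\alpha\bar m)=\bar H,\qquad \int_{\T^n}u=0,
\]
whose solvability, with a uniquely determined constant $\bar H\in\R$ and a unique solution $u\in C^{2,\gamma}$, is a classical fact (Lions--Papanicolaou--Varadhan type) since $V(\cdot,\alpha\bar m)$ is continuous and bounded. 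Then I solve the linear elliptic continuity equation
\[
-\Delta m-{\rm div}\bigl(m(\nabla u+P)\bigr)=0,\qquad \int_{\T^n}m=1,\quad m\ge 0;
\]
with the $C^{1,\gamma}$ drift $\nabla u+P$ at hand, Fredholm alternative applied to the adjoint (whose kernel consists of the constants) together with the probability constraint produces a unique nonnegative solution $m\in W^{2,p}$ for every $p>1$. Setting $\Phi(\bar m)=m$ defines the map whose fixed point will solve \eqref{ucor}.

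To close the fixed-point scheme I need uniform estimates. The Hopf--Cole substitution $w=e^{-u/2}$ turns the HJ equation into the linear principal eigenvalue problem
\[
-\Delta w+P\cdot\nabla w+\tfrac{w}{2}\bigl(V(y,\alpha\bar m)-\tfrac{|P|^2}{2}\bigr)=-\tfrac{\bar H}{2}\,w,\qquad w>0,
\]
which both gives an independent identification of $\bar H$ via Krein--Rutman and yields, through Harnack, positive upper and lower bounds for $w$ uniform in $\bar m\in\K$; Schauder estimates then produce a bound $\|u\|_{C^{2,\gamma}}\le C$ independent of $\bar m$. Plugging this back into the equation for $m$, standard $L^p$ and Calder\'on--Zygmund theory give uniform $W^{1,p}$ and hence $L^\infty$ bounds on $m$, which fix the choice of $M$ and make $\K$ stable under $\Phi$. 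Continuity of $\Phi$ from the $L^2$ topology into itself follows from stability of the HJ equation under $L^\infty$-bounded perturbations of the potential, and compactness from the embedding $W^{1,p}\hookrightarrow L^2$; Schauder's theorem then delivers a solution of \eqref{ucor}.

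For uniqueness I would use the Lasry--Lions monotonicity method. Given two solutions $(\bar H_i,u_i,m_i)$, subtract the two HJ equations and test against $m_1-m_2$, subtract the two continuity equations and test against $u_1-u_2$, and add the two identities. The cross terms involving $\bar H_1-\bar H_2$ vanish because $\int(m_1-m_2)=0$, and a direct expansion using $|\nabla u_1+P|^2-|\nabla u_2+P|^2=(\nabla u_1+\nabla u_2+2P)\cdot(\nabla u_1-\nabla u_2)$ combined with the integration-by-parts identity $\int(m_1-m_2)(-\Delta(u_1-u_2))\,dy=-\int\bigl[(m_1\nabla u_1-m_2\nabla u_2)+(m_1-m_2)P\bigr]\cdot\nabla(u_1-u_2)\,dy$ collapses to
\[
\tfrac12\int_{\T^n}(m_1+m_2)|\nabla u_1-\nabla u_2|^2\,dy+\int_{\T^n}(m_1-m_2)\bigl[V(y,\alpha m_1)-V(y,\alpha m_2)\bigr]dy=0.
\]
Both terms are nonnegative; the strict monotonicity \eqref{incr} then forces $m_1\equiv m_2$ when $\alpha>0$, after which the first term gives $\nabla u_1\equiv \nabla u_2$ on $\{m_1>0\}$, which is all of $\T^n$ by the lower bound below. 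The normalization $\int u=0$ then yields $u_1=u_2$ and, reading off the equation, $\bar H_1=\bar H_2$. The case $\alpha=0$ is a standard decoupled argument.

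Finally, the lower bound $m\ge c>0$ follows from Harnack's inequality applied to the linear equation for $m$ with the bounded smooth drift $\nabla u+P$: the strong maximum principle excludes $\inf m=0$ (otherwise $m\equiv 0$, contradicting $\int m=1$), while a covering of $\T^n$ gives $\sup m\le C\inf m$, so that $\int m=1$ forces $\inf m\ge 1/C$. The main technical obstacle in the whole scheme is producing the uniform $L^\infty$ bound on $m$ during the iteration, because the drift $\nabla u+P$ depends on $\bar m$ in a nonlinear way; this is handled precisely through the Hopf--Cole linearization, which decouples the HJ estimates from the coupling term and makes them insensitive to the $L^\infty$ norm of $\bar m$.
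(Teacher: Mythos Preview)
Your overall strategy---a Schauder fixed point in $m$, solving the ergodic Hamilton--Jacobi equation first and then the linear Fokker--Planck equation---is the same as the paper's. The paper iterates on the (unbounded) set of $C^{0,\gamma}$ probability densities and quotes \cite{bf87} for the HJ estimates and \cite{bbook} for the solvability and positivity of the Fokker--Planck equation, whereas you iterate on an $L^\infty$-ball in $L^2$ and manufacture the HJ estimates yourself via Hopf--Cole and Harnack. Two additions on your side are worth noting: you supply the Lasry--Lions uniqueness computation (the displayed identity is correct), which the paper states but does not write out---be aware that it needs the monotonicity \eqref{incr}, not merely the continuity of $V$ assumed in the proposition; and you derive $m\ge c>0$ directly from Harnack rather than quoting \cite{bbook}.

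One point needs correction. You claim that the Hopf--Cole linearization ``decouples the HJ estimates from the coupling term and makes them insensitive to the $L^\infty$ norm of $\bar m$.'' That is not what happens: the potential $V(y,\alpha\bar m)$ still sits as the zeroth-order coefficient in the linearized equation, and the Harnack constant depends on its $L^\infty$ norm. What actually makes the estimates uniform is that under the standing assumption $V$ is bounded, so $\|V(\cdot,\alpha\bar m)\|_\infty\le\|V\|_\infty$ regardless of $\bar m$; Hopf--Cole is a convenient vehicle, not the reason. If one takes the proposition literally with $V$ only continuous and possibly unbounded, then neither your scheme nor the paper's closes without extra work: the HJ bounds---and hence the $L^\infty$ bound on $\Phi(\bar m)$---then depend on $M$ through $\sup_{\T^n\times[0,\alpha M]}|V|$, so the invariance $\Phi(\K)\subset\K$ is not automatic.
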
   \begin{proof}
 Observe that if $\alpha=0$, in the first equation there is no $m$-dependence. So, we solve the ergodic problem (i) in \eqref{ucor} (see \cite{bf87}). Then we plug the solution $u$ in the second equation (ii), getting a solution $m$ (\cite{bbook}), with the  desired properties.

So, let assume that $\alpha>0$. We use a similar argument as in \cite{cm}. We consider $K:=\{m\in C^{0,\gamma}(\mathbb{T}^n)\mid m\textrm{ Lebesgue-density\ of\ a measure}\}$; observe that $K$ is a closed subset of $C^{0,\gamma}(\mathbb{T}^n)$. We introduce an operator $T:K\to K$, $m'\mapsto m$ as follows: given $m'\in K$, we solve the ergodic equation \eqref{ucor}-(i) with $m$ replaced by $m'$ coupled with the first condition in \eqref{ucor}-(iii). Then, we get $m$ solving \eqref{ucor}-(ii) coupled with the second condition in \eqref{ucor}-(iii).

Let us prove that $T$ is well-posed. By \cite{bf87}, there exists a unique pair $(u,\bar  H)\in W^{2,p}(\mathbb{T}^n)\times \mathbb{R}$ for any $p\geq 1$ ($u$ unique up to an additive constant) solving \eqref{ucor}-(i). In particular, $\nabla u$ is H\"older continuous. By \cite[Thm4.2]{bbook}, problem \eqref{ucor}-(ii) admits a solution $m\in W^{1,p}(\mathbb{T}^n)$ which is unique up to multiplicative constants. Hence, the mapping $T$ is well defined.

Let us now prove the $T$ is continuous and compact. To this end, we recall from \cite{bf87} that there holds
\[
|\bar  H|\leq K+|P|^2 \qquad \|u\|_{W^{2,p}(\mathbb{T}^n)}\leq C
\]
where $K$ depends on $\|V\|_\infty$ and  $C$ is a constant depending only on $\|V\|_\infty$, $P$ and $p$. On the other hand, we also recall from \cite[Thm4.2]{bbook} and \cite[Lemma 2.3]{be} that there holds
\[
\|m\|_{W^{1,p}(\mathbb{T}^n)}\leq C'
\]
where $C'$ is a constant depending only on $\|\nabla u\|_\infty$, and so in particular only from $\|V\|_\infty$ and  $P$.

Consider a sequence $\{m'_n\}$, with $m'_n\in K$ and $m'_n\to m'$ in the $C^{0,\gamma}$-topology. Therefore, the corresponding solutions $(u'_n, \bar  H_n)$ to  \eqref{ucor}-(i) with $m$ replaced by $m_n'$ coupled with the first condition in \eqref{ucor}-(iii) are uniformly bounded in~$W^{2,p}(\mathbb{T}^n)\times \mathbb{R}$. Consequently, the corresponding solutions $m_n$ to \eqref{ucor}-(ii) coupled with the second condition in \eqref{ucor}-(iii) are uniformly bounded in $W^{1,p}(\mathbb{T}^n)$.
So, by possibly passing to a subsequence, we get $u'_n\to u^*$ and $\bar  H_n\to \bar  H^*$. By stability and by uniqueness of problem \eqref{ucor}-(i), we get $u^*=u$ and $\bar  H^*=\bar  H$. In a similar manner, we get $m_n\to m$. Hence, we have accomplished the proof of the continuity of $T$.

The map $T$ is also compact because, $m\in W^{1,p}(\mathbb{T}^n)$ for any $p\geq 1$; in particular, $m\in C^{0,\alpha'}(\mathbb{T}^n)$ for any $\alpha'\in (0,1)$. By the compactness of the embedding $C^{0,\gamma'}(\mathbb{T}^n)\to C^{0,\gamma}(\mathbb{T}^n)$ with $\gamma'>\gamma$, we infer the compactness of $T$.

We conclude by Schauder's fixed point theorem the proof of the existence of a solution $(u,m)\in W^{2,p}(\mathbb{T}^n)\times W^{1,p}(\mathbb{T}^n)$ to \eqref{ucor} for any $p>1$. By a standard bootstrap argument, we obtain the claimed regularity, i.e. $u\in C^{2,\gamma}$. 

The last statement is proved in  \cite[Thm 4.3, pag 136]{bbook}. 
\end{proof}

We prove now some properties of the effective Hamiltonian $\bar H$ and the effective vector field $\bar b$, defined respectively in~\eqref{ucor} and~\eqref{bbar}.
\begin{proposition}\label{4.2}
$\bar H$ is coercive in $P$, that is \[ \frac{|P|^2}{2}-\|V\|_\infty \leq \bar H(P,\alpha)\leq \frac{|P|^2}{2}\]
and  is decreasing in $\alpha$.

Moreover, for any $\gamma\in (0,1)$, $p\in(1,+\infty)$, the maps \[(P,\alpha)\to\bar H(P,\alpha)\in\mathbb{R},\qquad(P,\alpha)\to \bar b(P,\alpha)\in \mathbb{R},\qquad (P,\alpha)\to (u,m)\in C^{1,\gamma}\times W^{1,p}\] are all continuous, where   $(u,m)$ is the solution to \eqref{ucor}.

\end{proposition}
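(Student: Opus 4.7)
The plan is to exploit the duality between equations (i) and (ii) of \eqref{ucor} to obtain a closed-form identity for $\bar H$, and then deduce all the claimed properties from it. First I would test (i) against $m$ and use (ii) (tested against $u$) to rewrite $\int m(-\Delta u)\, dy = -\int m(\nabla u + P)\cdot \nabla u\, dy$; the cross terms involving $P\cdot \nabla u$ then cancel cleanly, producing
\[
\bar H(P,\alpha) = \frac{|P|^2}{2} - \frac{1}{2}\int_{\T^n} m|\nabla u|^2\, dy - \int_{\T^n} m\, V(y, \alpha m)\, dy.
\]
The upper coercivity bound $\bar H \leq |P|^2/2$ is immediate, since $m, V \geq 0$. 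For the lower bound I would apply the maximum principle directly to \eqref{ucor}-(i): at a point $y_M$ where $u$ attains its maximum one has $\nabla u(y_M) = 0$ and $-\Delta u(y_M) \geq 0$, hence evaluating (i) at $y_M$ yields $\bar H(P,\alpha) \geq |P|^2/2 - V(y_M, \alpha m(y_M)) \geq |P|^2/2 - \|V\|_\infty$.

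For monotonicity, let $(u_i, m_i, \bar H_i)$ solve the cell problem at $(P, \alpha_i)$, $\alpha_1 < \alpha_2$, and write $v_i = \nabla u_i + P$, $V_i = V(y, \alpha_i m_i)$. Subtracting the two HJ equations, testing against $m_2$, and using (ii)$_2$ together with the algebraic identity $-v_2\cdot(v_1 - v_2) + \tfrac12(|v_1|^2 - |v_2|^2) = \tfrac12|v_1 - v_2|^2$, I obtain
\[
\bar H_1 - \bar H_2 = \tfrac{1}{2}\int_{\T^n} m_2 |v_1 - v_2|^2\, dy + \int_{\T^n} m_2 (V_2 - V_1)\, dy.
\]
The first term is non-negative. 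For the second, I would decompose $V_2 - V_1 = [V(y, \alpha_2 m_2) - V(y, \alpha_1 m_2)] + [V(y, \alpha_1 m_2) - V(y, \alpha_1 m_1)]$; the first bracket is pointwise $\geq 0$ (since $V$ is increasing in $m$ and $\alpha_2 > \alpha_1$), while the second is controlled using strong monotonicity $(V(y,\alpha_1 m_2) - V(y,\alpha_1 m_1))(m_2 - m_1) \geq \gamma_K\alpha_1 (m_2 - m_1)^2$ combined with the companion Lasry--Lions identity
\[
\int_{\T^n}(m_1 - m_2)(V_1 - V_2)\, dy = \tfrac{1}{2}\int_{\T^n}(m_1 + m_2)|v_1 - v_2|^2\, dy \geq 0,
\]
obtained by instead testing the subtracted equation against $m_1 - m_2$ (where the $\bar H$ constants cancel since $\int m_i = 1$). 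Putting these pieces together yields $\bar H_1 \geq \bar H_2$.

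For the continuity statements, I would rely on the a priori bounds recorded in the proof of Proposition \ref{cellprop}: on compact $(P, \alpha)$-sets, $\bar H$ is bounded and $(u, m)$ is bounded in $W^{2,p}(\T^n)\times W^{1,p}(\T^n)$ for every $p<\infty$. Given a sequence $(P_n, \alpha_n) \to (P, \alpha)$, compactness of the embedding $W^{2,p} \hookrightarrow C^{1,\gamma}$ yields a subsequence along which $(u_n, m_n, \bar H_n) \to (u^*, m^*, \bar H^*)$ in $C^{1,\gamma}(\T^n)\times W^{1,p}(\T^n)\times \R$; passing to the limit in \eqref{ucor} and invoking the uniqueness part of Proposition \ref{cellprop} identifies the limit with $(u, m, \bar H)(P, \alpha)$, so the full family converges without extraction. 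Continuity of $\bar b(P, \alpha) = \int m(\nabla u + P)\, dy$ is then immediate from its defining formula. The principal obstacle is the monotonicity step: rigorously closing the sign of the indefinite second bracket in the $m_2$-weighted integral requires carefully balancing the pointwise strong monotonicity bound against the global quadratic bound furnished by the L--L identity, as the pointwise sign of $m_2 - m_1$ is not known a priori.
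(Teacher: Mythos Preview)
Your coercivity and continuity arguments are essentially correct. For the upper bound you derive the energy identity $\bar H=\tfrac{|P|^2}{2}-\tfrac12\int m|\nabla u|^2-\int mV$, whereas the paper simply evaluates (i) at a minimum of $u$; both work. The continuity argument via compactness and uniqueness matches the paper's.

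The monotonicity step, however, has a genuine gap. First, your Lasry--Lions identity carries the wrong sign: testing the subtracted HJ equation against $m_1-m_2$ and using the subtracted FP equation actually gives
\[
\int_{\T^n}(m_1-m_2)(V_1-V_2)\,dy \;=\; -\tfrac12\int_{\T^n}(m_1+m_2)|v_1-v_2|^2\,dy \;\le\; 0,
\]
not $\ge 0$. (In the uniqueness setting $\alpha_1=\alpha_2$ the left side is also $\ge 0$ by monotonicity, forcing both to vanish; here $\alpha_1\neq\alpha_2$ so no pointwise sign is available.) With the corrected sign your ``balancing'' scheme collapses: substituting the identity into your formula $\bar H_1-\bar H_2=\tfrac12\int m_2|v_1-v_2|^2+\int m_2(V_2-V_1)$ just reproduces the symmetric formula $\bar H_1-\bar H_2=-\tfrac12\int m_1|v_1-v_2|^2+\int m_1(V_2-V_1)$, and neither decomposition controls the indefinite piece $\int m_j\bigl[V(y,\alpha_1 m_2)-V(y,\alpha_1 m_1)\bigr]$.

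The paper sidesteps this entirely by a rescaling trick: it sets $m^{\alpha_i}:=\alpha_i m_i$, so that $\int m^{\alpha_i}=\alpha_i$ and, crucially, $V_i=V(y,m^{\alpha_i})$. One then tests the subtracted HJ equation against $\bar m:=m^{\alpha_1}-m^{\alpha_2}$ (which still solves the subtracted FP equation, by linearity). Because $\int\bar m=\alpha_1-\alpha_2$, the constant term produces the factor $(\bar H_1-\bar H_2)(\alpha_1-\alpha_2)$, and because the arguments of $V$ now match the test function, monotonicity gives $(V_1-V_2)(m^{\alpha_1}-m^{\alpha_2})\ge 0$ \emph{pointwise}. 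The same algebraic identity you used then yields $(\bar H_1-\bar H_2)(\alpha_1-\alpha_2)\le 0$ in one line, with no indefinite remainder to balance.
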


\begin{proof}
{\bf Coercivity.} Let $y\in\mathbb{T}^n$ be a maximum point of $u,$ which is a solution to equation (i) in \eqref{ucor}. Then \[\bar H(P, \alpha)\geq \frac{|P|^2}{2}-\|V\|_\infty\] which implies the desired coercivity.
Moreover if we compute the equation (i) in  \eqref{ucor} at a minimum point of $u$, recalling that $V\geq 0$, we get \[\bar H(P,\alpha)\leq \frac{|P|^2}{2}.\]
{\bf Monotonicity.} Denote  $m^{\alpha_1}=\alpha_1 m^1$, where $(u^1,m^1)$ solves (ii) in \eqref{ucor} with $(P,\alpha_1)$ and $m^{\alpha_2}=\alpha_2 m^2$, where $(u^2,m^2)$ solves (ii) in  \eqref{ucor} with $(P,\alpha_2)$.
Let $\bar u=u^1-u^2$ and $\bar m=m^{\alpha_1}-m^{\alpha_2}$. Then $(\bar u, \bar m)$ solves
\[\begin{cases} -\Delta\bar u +\frac{1}{2}|\nabla u^1+P|^2-\frac{1}{2}|\nabla u^2+P|^2 -V(y,m^{\alpha_1})+V(y,m^{\alpha_2})\\ \qquad =\bar H(P,\alpha_1)-\bar H (P, \alpha_2)\\ -\Delta \bar m-{\rm div}(m^{\alpha_1}(\nabla u^1+P)-m^{\alpha_2}(\nabla u^2+P))=0 \\ \int_{\mathbb{T}^n} \bar u=0 \ \ \int_{\mathbb{T}^n} \bar m=\alpha_1-\alpha_2.
\end{cases}\]
We multiply the first equation by $\bar m$ and the second by $\bar u$,  we integrate and we subtract one equation from the other to obtain
\[(\bar H(P,\alpha_1)-\bar H (P, \alpha_2))(\alpha_1-\alpha_2)=-\int (V(y,m^{\alpha_1})-V(y,m^{\alpha_2}))(m^{\alpha_1}-m^{\alpha_2})+\]\[+\int \bar m\left(\frac{1}{2}|\nabla u^1+P|^2-\frac{1}{2}|\nabla u^2+P|^2\right)-\nabla\bar u\cdot (m^{\alpha_1}(\nabla u^1+P)-m^{\alpha_2}(\nabla u^2+P)).\]
It is easy to check that if $q_1, q_2\in\mathbb{R}^n$ and $n_1,n_2\geq 0$ then \[(n_1-n_2)\left(\frac{|q_1|^2}{2}-\frac{|q_2|^2}{2}\right)- (q_1-q_2)\cdot (n_1 q_1-n_2 q_2)= -\frac{n_1+n_2}{2} |q_1-q_2|^2\leq 0.\]
Then applying this equality to $q_1= \nabla u^1+P$, $q_2=\nabla u^2+P$, $n_1=m^{\alpha_1}$, $n_2=m^{\alpha_2}$, we get that \[ (\bar H(P,\alpha_1)-\bar H (P, \alpha_2))(\alpha_1-\alpha_2)\leq -\int (V(y,m^{\alpha_1})-V(y,m^{\alpha_2}))(m^{\alpha_1}-m^{\alpha_2}) \leq 0.\]
{\bf Continuity.} Consider a sequence $(P_n,\alpha_n)\to (P,\alpha)$.  Observe that $\bar H(P_n, \alpha_n)$ are uniformly bounded in $n$, due to coerciveness. For every $n$, let the pair $(u_n,m_n)$ be the solution to the cell problem \eqref{ucor} with  $(P,\alpha)$ replaced by $(P_n, \alpha_n)$.  By   the same argument as in \cite{bf87}, we get a priori bounds on $\nabla u_n$ independent of $n$, i.e. there exists $C>0$ such that $|\nabla u_n|\leq C$ for every $n$. So, up to a subsequence, $\nabla u_n\to v$ weakly$^*$ in $L^{\infty}$. Moreover, $\|m_n\|_{W^{1, p}} \leq K$ for every $p\in (1,\infty)$ and every $n$ (see \cite[Lemma 2.3]{be}). So, by Morrey's inequality, for $p$ sufficiently large, there exists a subsequence $m_n$ converging in $C^{0,\gamma}$ to $m$. So, up to extracting a converging sequence, we get, by stability of viscosity solutions, that $u_n\to u$, where $u$ is a viscosity solution to \[ -\Delta u     \!+\!\frac{1}{2}|\nabla u  \!+\!P|^2\!-\!V(y, \alpha m) =
 \lim_n\bar  H(P_n,\alpha_n).\] Moreover $u$ is Lipschitz, and  by a standard argument therefore even smooth.  
This implies that $v=\nabla u$, and then by uniqueness, along any converging subsequences, $\lim_n\bar  H(P_n,\alpha_n)=\bar H(P, \alpha)$, $\lim_n u_n=u$ uniformly and $\lim_n m_n=m$ uniformly and in $W^{1,p}$, for $p\in (1, +\infty)$, where $(u,m)$ is the solution to \eqref{ucor}. By standard elliptic regularity theory and a priori bounds on the solution to \eqref{ucor}-$(i)$ with uniformly Holder continuous source term, see also Proposition \ref{cellprop},  the convergence of $u_n$ is also in $C^{1,\gamma}$.
\end{proof}

We conclude recalling a variational characterization of the effective Hamiltonian.
	
Following \cite{ll}, we introduce the following energy  functional in $H^1(\mathbb{T}^n)\times H^1(\mathbb{T}^n)$ for every fixed $P\in \mathbb{R}^n$ and $\alpha\geq 0$:
\begin{equation}\label{energy}
E_{P,\alpha}(v,n)=\int_{\mathbb{T}^n}  n\frac{|\nabla v+P|^2}{2}+ \nabla n\cdot (\nabla v+P)-\Phi_\alpha(y,  n) dy\end{equation}
where $(\Phi_\alpha)_n\equiv \frac{\partial \Phi_\alpha}{\partial n}(y,n)=V(y,\alpha n)$.
\begin{lemma}
\[\frac{\partial E_{P,\alpha}}{\partial m}(u,m)=0 \quad \text{  iff $(u,m)$ solves (i) in  \eqref{ucor}}\]
\[\frac{\partial E_{P,\alpha}}{\partial u}(u,m)=0\quad \text{ iff $(u,m)$ solves (ii) in  \eqref{ucor}}.\]
Moreover, for $(u,m)$ solution to \eqref{ucor}, there holds true \begin{equation}\label{effh} \bar H(P,\alpha)= E_{P,\alpha}(u ,m )+\int_{\mathbb{T}^n} \left(\Phi_\alpha(y, m )-V(y,\alpha m )  m \right) dy.\end{equation}
 \end{lemma}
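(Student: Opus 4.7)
The plan is to compute the two partial Gateaux derivatives of $E_{P,\alpha}$ via standard integration by parts, and then derive the identity for $\bar H(P,\alpha)$ by plugging equation (i) of \eqref{ucor} into the functional evaluated at the solution.

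\textbf{First step (the two Euler--Lagrange equations).} For a test function $\phi\in C^\infty(\T^n)$, a direct differentiation of $E_{P,\alpha}(v,n)$ in the $n$-slot gives
\[
\frac{\partial E_{P,\alpha}}{\partial n}(v,n)[\phi]=\int_{\T^n}\phi\,\frac{|\nabla v+P|^2}{2}+\nabla\phi\cdot(\nabla v+P)-V(y,\alpha n)\,\phi\,dy,
\]
and integrating the middle term by parts (boundary terms vanish by periodicity) yields
\[
\frac{\partial E_{P,\alpha}}{\partial n}(v,n)[\phi]=\int_{\T^n}\phi\left(\frac{|\nabla v+P|^2}{2}-\Delta v-V(y,\alpha n)\right)dy.
\]
Since $n$ satisfies the constraint $\int n=1$, admissible variations $\phi$ have zero mean; requiring the integral to vanish for every such $\phi$ forces the bracketed expression at $(v,n)=(u,m)$ to equal a constant, which is precisely $\bar H(P,\alpha)$, and that is equation (i) of \eqref{ucor}. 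Analogously, in the $v$-slot,
\[
\frac{\partial E_{P,\alpha}}{\partial v}(v,n)[\psi]=\int_{\T^n}\bigl[n(\nabla v+P)+\nabla n\bigr]\cdot\nabla\psi\,dy=-\int_{\T^n}\psi\bigl[\Delta n+\mathrm{div}(n(\nabla v+P))\bigr]dy,
\]
whose vanishing for all $\psi\in C^\infty(\T^n)$ gives equation (ii) of \eqref{ucor}.

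\textbf{Second step (the energy identity).} Multiply equation (i) of \eqref{ucor} by $m$ and integrate; using $\int_{\T^n} m\,dy=1$ we obtain
\[
\int_{\T^n} m\,\frac{|\nabla u+P|^2}{2}\,dy=\int_{\T^n} m\,\Delta u\,dy+\int_{\T^n} m\,V(y,\alpha m)\,dy+\bar H(P,\alpha).
\]
Integration by parts converts $\int_{\T^n} m\,\Delta u\,dy=-\int_{\T^n}\nabla m\cdot\nabla u\,dy$, while $\int_{\T^n}\nabla m\cdot P\,dy=0$ by periodicity of $m$. Substituting these two identities into the definition \eqref{energy} of $E_{P,\alpha}(u,m)$, the contributions $\pm\int\nabla m\cdot\nabla u$ cancel, and what remains is exactly
\[
E_{P,\alpha}(u,m)=\bar H(P,\alpha)+\int_{\T^n}\bigl(V(y,\alpha m)\,m-\Phi_\alpha(y,m)\bigr)dy,
\]
which is \eqref{effh} after rearrangement.

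\textbf{Main subtlety.} The only point requiring a little care is the Lagrange-multiplier aspect of Step~1: when one sets $\partial E_{P,\alpha}/\partial n=0$ against mean-zero test functions, the resulting PDE holds only up to an additive constant, and one must then identify this constant with the ergodic value $\bar H(P,\alpha)$ provided by Proposition~\ref{cellprop}. Steps 2 and 3 are essentially routine computations once periodicity is invoked to discard boundary contributions.
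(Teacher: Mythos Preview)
Your proof is correct and follows essentially the same route as the paper: a direct Gateaux-derivative computation for the Euler--Lagrange identifications, followed by multiplying equation (i) by $m$ and integrating for the energy identity. One minor difference is that for \eqref{effh} the paper first multiplies equation (ii) by $u$ to obtain $\int_{\T^n} m\,\Delta u = \int_{\T^n} m(\nabla u+P)\cdot\nabla u$, whereas you bypass this step by using directly the integration by parts $\int_{\T^n} m\,\Delta u = -\int_{\T^n}\nabla m\cdot\nabla u$ together with $\int_{\T^n}\nabla m\cdot P=0$; your version is therefore slightly more economical, since it derives \eqref{effh} from equation (i) alone.
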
\begin{proof} The first statement is a straightforward computation. We give a quick sketch of it.
Fix $m$ and let $u$ be the solution to equation (i) in  \eqref{ucor} with this fixed $m$. Then, for every $\epsilon>0$ small and smooth $\phi$ with $\int_{\mathbb{T}^n}\phi =0$ we get
\begin{eqnarray*}\frac{ E_{P,\alpha}(u,m+\epsilon \phi)- E_{P,\alpha}(u,m)}{\epsilon}
& =& \int_{\mathbb{T}^n} \phi\frac{|\nabla u+P|^2}{2}+ \nabla \phi\cdot (\nabla u+P)-\\
&-& \int_{\mathbb{T}^n}\frac{\Phi_\alpha(y,  m+\epsilon\phi)-\Phi_\alpha(y,  m )}{\epsilon} .\end{eqnarray*}
So, letting $\epsilon\to 0$, and using the fact that $u$ solves (i) in \eqref{ucor} we get
\[\lim_{\epsilon\to 0}\frac{ E_{P,\alpha}(u,m+\epsilon \phi)- E_{P,\alpha}(u,m)}{\epsilon}= \bar H(P, \alpha)\int_{\mathbb{T}^n}\phi =0.\] The other implication is obtained reverting this argument.
Analogous arguments work for the second statement.

Finally, in order to prove~\eqref{effh}, we first observe that multiplying equation (ii) in~\eqref{ucor} by $u$ and integrating over $\mathbb{T}^n$ leads to
\[ \int \Delta u \  m=\int m  \left(\nabla u  +P\right)\nabla u
.\]
Now, multiplying equation (i) in~\eqref{ucor} by $m$ and integrating over $\mathbb{T}^n$, we infer~\eqref{effh}.\end{proof}

\section{Regularity properties of the effective operators}\label{sect:regularity}
In this section we study the relation between the effective Hamiltonian and the effective vector field. The main result is the computation of $\nabla_P \bar H(P,\alpha)$ established in Theorem \ref{hbar}. 
To do this, we first of all derive local Lipschitz estimates of $\bar H$. Finally we will provide some related regularity results also for the vector field $\bar b$. 

\subsection{Variation of $\bar H$ with respect to $P$}

Fix now $\delta$ small and consider for every $i=1,\dots, n$, the cell problem \eqref{ucor} associated to $(P+\delta e_i, \alpha)$ where $\{e_i\}_{i=1,\dots,n}$ is an orthonormal basis of $\mathbb{R}^n$. We denote with $(u^\delta_i, m^\delta_i)$ the solution.
First of all by Proposition \ref{4.2},  \begin{equation}\label{limdelta}\lim_{\delta\to 0} u^\delta_i= u \ \text{ in }C^{1,\gamma}\qquad\lim_{\delta\to 0} m^\delta_i= m \ \ \text{ in }C^{0} \end{equation} where $(u,m)$ is the solution to \eqref{ucor} associated to $(P,\alpha)$. 


Our aim is to characterize the following functions, and compute their limits as $\delta\to 0^+$
\begin{equation}\label{deltaw} w_i^\delta(x):= \frac{u^\delta_i-u}{\delta}\qquad n_i^\delta=\frac{m^\delta_i-m}{\delta}. \end{equation}
Note that $(w_i^\delta, n_i^\delta)$ is a solution of the following system
\begin{equation}\label{mdelta}
\begin{cases} (i) &  -\Delta  w_i^\delta +  (\nabla w^\delta_i+e_i)\cdot \frac{2P+\delta e_i +\nabla u+\nabla u^\delta_i }{2}- V_m(y,\alpha \widetilde n^\delta)\alpha  n_i^\delta\\ &  =\frac{\bar H(P+\delta e_i,\alpha)-\bar H(P,\alpha)}{\delta}\\ (ii)&
-\Delta n^\delta_i-{\rm div}\big((P+\delta e_i+\nabla u_i^\delta)  n_i^\delta \big) ={\rm div }(m(\nabla   w_i^\delta+ e_i)) \\ (iii) 
& \int_{\mathbb{T}^n} n_i^\delta=\int_{\mathbb{T}^n} w_i^\delta=0 \end{cases}
\end{equation}
where we used the mean value theorem to write \begin{equation}\label{mean}\frac{1}{\delta} (V(y,\alpha m^\delta_i)-V(y,\alpha m))= V_m(y,\alpha {\widetilde{ n}}^\delta)\alpha  n_i^\delta \end{equation} for some $\widetilde n^\delta$ such that $n^\delta(y)\in (m(y),m^\delta_i(y))$.

First of all we prove a priori estimates on $w^\delta_i, n^\delta_i$ and that, as $\delta\to 0$, then the right hand side of (i) in \eqref{mdelta} does not explode. 
\begin{proposition} \label{estidelta} Let $\alpha>0$. 
There is a constant $C$ depending on  $(P, \alpha)$, 
such that, for any $\delta$ sufficiently small,  
\[\|n_i^\delta\|_2^2+\|\nabla w_i^\delta\|_2^2\le C\]
and \begin{equation}\label{hbarlip}\frac{\bar H(P+\delta e_i,\alpha)-\bar H(P,\alpha)}{\delta}\leq C.\end{equation} 
This implies in particular that $\bar H$ is locally Lipschitz in $P$. 
\end{proposition}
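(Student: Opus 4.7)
The strategy is to apply the Lasry--Lions monotonicity identity to the \emph{nonlinear} difference of the two cell problems \eqref{ucor} at $(P+\delta e_i,\alpha)$ and at $(P,\alpha)$, rather than to the linearised system \eqref{mdelta}. Working at the nonlinear level means that all quadratic terms combine exactly via the algebraic identity already exploited in the monotonicity step of Proposition \ref{4.2}, whereas testing \eqref{mdelta} directly leaves a cubic-in-unknowns remainder of order $\delta$ that is delicate to absorb.

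Concretely, set $\bar u=u^\delta_i-u=\delta w^\delta_i$ and $\bar m=m^\delta_i-m=\delta n^\delta_i$, and write $Q_1=\nabla u+P$, $Q_2=\nabla u^\delta_i+P+\delta e_i$, so that $Q_2-Q_1=\delta(\nabla w^\delta_i+e_i)$. Subtract the two Hamilton--Jacobi equations in \eqref{ucor}, multiply by $\bar m$ and integrate; then subtract the two Fokker--Planck equations, multiply by $\bar u$ and integrate. Taking the difference of the two resulting identities the cross term $\int\nabla\bar u\cdot\nabla\bar m$ cancels and the constant $\bar H(P+\delta e_i,\alpha)-\bar H(P,\alpha)$ drops out because $\int\bar m=0$. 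Inserting the pointwise identity recorded in Proposition \ref{4.2} with $n_1=m^\delta_i$, $n_2=m$, $q_1=Q_2$, $q_2=Q_1$, using $\nabla\bar u=(Q_2-Q_1)-\delta e_i$ and the mean value formula \eqref{mean}, and finally dividing through by $\delta^2$, one arrives at the clean identity
\[
\int\tfrac{m^\delta_i+m}{2}\,\bigl|\nabla w^\delta_i+e_i\bigr|^2\,dy+\alpha\!\int V_m(y,\alpha\widetilde n^\delta)(n^\delta_i)^2\,dy=\int e_i\cdot\bigl(n^\delta_i\, Q_2+m(\nabla w^\delta_i+e_i)\bigr)\,dy.
\]

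The left-hand side is coercive: $m,m^\delta_i\ge c>0$ by Proposition \ref{cellprop}, and since $\alpha\widetilde n^\delta$ stays in a fixed compact interval for $\delta$ small (by Proposition \ref{4.2}) the monotonicity assumption \eqref{incr} yields $V_m\ge\gamma>0$. The right-hand side is linear in $(\nabla w^\delta_i,n^\delta_i)$ and is controlled by $C(\|\nabla w^\delta_i\|_2+\|n^\delta_i\|_2)$, since $\|Q_2\|_\infty$ and $\|m\|_\infty$ are bounded uniformly in $\delta$. Young's inequality then delivers the a priori bound $\|n^\delta_i\|_2^2+\|\nabla w^\delta_i\|_2^2\le C$ with $C$ depending only on $(P,\alpha)$. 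For \eqref{hbarlip}, integrating equation (i) of \eqref{mdelta} over $\T^n$ makes the Laplacian vanish and leaves $(\bar H(P+\delta e_i,\alpha)-\bar H(P,\alpha))/\delta$ equal to a linear functional of $(\nabla w^\delta_i,n^\delta_i)$, bounded via Cauchy--Schwarz by the $L^2$ estimate just obtained; running the same argument with $\delta<0$ completes the local Lipschitz continuity of $\bar H$ in $P$.

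The conceptual hurdle is recognising, in the very first step, that the Lasry--Lions identity should be applied to the nonlinear difference and not to the linearised system \eqref{mdelta}. Performed directly on \eqref{mdelta}, the analogous calculation produces the extra remainder $-\tfrac{\delta}{2}\int|\nabla w^\delta_i+e_i|^2\, n^\delta_i\,dy$, cubic in the unknowns, which one would need to absorb through an $L^\infty$ bootstrap; the nonlinear approach above bypasses this difficulty entirely by keeping the squares together until the cancellation has been performed.
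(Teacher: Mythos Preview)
Your proof is correct and is, up to an overall factor of $\delta^2$, literally the same computation the paper performs: the paper tests equation (i) of \eqref{mdelta} with $n_i^\delta$ and (ii) with $w_i^\delta$ and subtracts, arriving at
\[
\int (e_i\cdot f)\,n_i^\delta-(\nabla w_i^\delta\cdot e_i)\,m\;=\;\int V_m(y,\alpha\widetilde n^\delta)\,\alpha(n_i^\delta)^2+\Bigl(\tfrac{\delta}{2}n_i^\delta+m\Bigr)|\nabla w_i^\delta|^2,
\]
and then observes that $\tfrac{\delta}{2}n_i^\delta+m=\tfrac12(m_i^\delta+m)\ge c>0$, exactly the coercivity you use.

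Your final paragraph, however, is mistaken. Testing \eqref{mdelta} directly does \emph{not} leave a problematic cubic remainder: the term $-\tfrac{\delta}{2}\int|\nabla w_i^\delta|^2 n_i^\delta$ that appears combines with the term $\int m|\nabla w_i^\delta|^2$ coming from testing (ii) against $w_i^\delta$ to give precisely $\int\tfrac{m+m_i^\delta}{2}|\nabla w_i^\delta|^2$, which is coercive. This is not an accident; the system \eqref{mdelta} is nothing but the nonlinear difference divided by $\delta$, so testing it against $(n_i^\delta,w_i^\delta)$ is identical to your procedure of testing the nonlinear difference against $(\bar m,\bar u)=(\delta n_i^\delta,\delta w_i^\delta)$ and dividing by $\delta^2$. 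There is no conceptual hurdle here, and no $L^\infty$ bootstrap is needed in either presentation.
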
 
\begin{proof}
We set $f:=\nabla u+P+\delta e_i$. Note that $\|f\|_2^2$ is bounded uniformly in $\delta$ by Proposition~\ref{cellprop}. Hence, we can write (i) and~(ii) of~\eqref{mdelta} as
\begin{equation*}
\begin{cases}
(i)&-\Delta w^\delta_i +(\nabla w^\delta_i+e_i)\cdot f +\frac{\delta}{2}(|\nabla w^\delta_i|^2-1)-V_m(y,\alpha \widetilde{n}^{ \delta})\alpha n^{ \delta}_i\\ &   =\frac{\bar H(P+\delta e_i,\alpha)-\bar H(P,\alpha)}\delta\\
(ii)&
-\Delta n^\delta_i -{\rm div}\big((f+\delta \nabla w^\delta_i)n^\delta_i\big)={\rm div}\big(m(\nabla w^\delta_i+e_i)\big)
\end{cases}
\end{equation*}
Now we test (i) with $n_i^\delta$ and (ii) with $w_i^\delta.$ Note that $n_i^\delta$ has mean zero, so the
constant term on the right hand side of (i) drops out. We subtract and get
\begin{multline*}
\int\left((e_i\cdot f)n_i^\delta+\frac{\delta}{2}(|\nabla w^\delta_i|^2-1)n_i^\delta -V_m(y, \alpha \widetilde n^\delta)\alpha(n_i^\delta)^2- 
\delta|\nabla w^\delta_i|^2n_i^\delta  \right)\\=\int\left( m|\nabla w_i^\delta|^2+(\nabla w_i^\delta\cdot e_i)m\right)
\end{multline*}i.e., recalling that $\int n_i^\delta=0$, 
$$
\int\left((e_i\cdot f)n_i^\delta-(\nabla w_i^\delta \cdot e_i)m\right)=\int\left(V_m(y,\alpha \widetilde n^\delta)\alpha(n_i^\delta)^2+\left(\frac{\delta}{2}n_i^\delta+m\right)|\nabla w^\delta_i|^2\right).
$$
Since $m$ and $m_i^\delta$ are bounded in $L^\infty$ uniformly in $\delta$ (i.e. no concentrations phenomena appear), then our assumptions on $V$ (monotone with derivative bounded away from zero) allow to estimate $V_m(\cdot)>\gamma_K>0$ for some constant $\gamma_K$ depending on  $m$ (by \eqref{incr}, \eqref{limdelta} and \eqref{mean}) and moreover $\frac{\delta}{2}n_i^\delta+m=\frac{1}{2}\left(m_i^\delta +m\right)>c>0$ by Proposition \ref{cellprop} and \eqref{limdelta}. 
So the right hand side is $\ge c(\|n_i^\delta\|_2^2+\|\nabla w_i^\delta\|_2^2)$ and we conclude with Young's  inequality that there exists
a constant $C$ which depends only on a priori estimates of the correctors, on $\alpha$ and $P$ 
such that$$
\|n_i^\delta\|_2^2+\|\nabla w_i^\delta\|_2^2\le C(\nabla u, m).
$$
This together with testing $(i)$ with the constant 1 gives \eqref{hbarlip}.

\end{proof}

In order to characterize the limit as $\delta\to 0$ of  $w_i^\delta, n_i^\delta$, solution to \eqref{mdelta},  we introduce  an auxiliary system and we study existence and uniqueness of its solutions.

\begin{lemma}\label{lemmavar}
Let $(u,m)$ the solution to \eqref{ucor}, with $\alpha>0$.
Then for every $i=1,\dots, n$  there  exists a unique $c_i(P,\alpha)\in \mathbb{R}$ such that there exists a solution $(\tilde u_i, \tilde m_i)$ to the system
\begin{equation}\label{mvarP}
\begin{cases} (i) &  -\Delta\tilde u_i + \nabla\tilde u_i\cdot (\nabla u+P) + (\nabla u+P)\cdot e_i -V_m(y,\alpha m)\alpha\tilde m_i=c_i(P,\alpha)\\ (ii)&
-\Delta \tilde m_i-{\rm div}\big((P+\nabla u)\tilde m_i \big) ={\rm div }(m(\nabla \tilde u_i+ e_i)) \\
& \int_{\mathbb{T}^n}\tilde m_i=\int_{\mathbb{T}^n}\tilde u_i=0. \end{cases}
\end{equation} Moreover the solution is unique and smooth, i.e. $(\tilde u_i, \tilde m_i)	\in C^{2,\gamma}\times  W^{1,p}$ for all $\gamma \in (0,1)$ and all $p>1$.
\end{lemma}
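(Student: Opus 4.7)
The plan is to construct $(\tilde u_i,\tilde m_i,c_i)$ as the weak limit of the family $\bigl(w_i^\delta, n_i^\delta, \delta^{-1}(\bar H(P+\delta e_i,\alpha)-\bar H(P,\alpha))\bigr)$ defined through \eqref{deltaw}--\eqref{mdelta}, using the uniform bounds of Proposition \ref{estidelta}; uniqueness will follow from the standard linearized MFG monotonicity identity, and the claimed regularity from linear elliptic bootstrapping.

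\textbf{Existence.} By Proposition \ref{estidelta} together with Poincar\'e (recall $\int_{\T^n} w_i^\delta=0$), the sequence $(w_i^\delta,n_i^\delta)$ is bounded in $H^1(\T^n)\times L^2(\T^n)$ and the difference quotient is bounded in $\R$. Extract a subsequence with $w_i^\delta\rightharpoonup \tilde u_i$ in $H^1$, $n_i^\delta\rightharpoonup \tilde m_i$ in $L^2$, and the quotient converging to some $c_i$. By Proposition \ref{4.2} we have $\nabla u_i^\delta\to\nabla u$ and $m_i^\delta\to m$ in $C^{0,\gamma}$; since the intermediate value $\tilde n^\delta$ in \eqref{mean} lies between $m$ and $m_i^\delta$, it also tends to $m$ uniformly, whence $V_m(y,\alpha\tilde n^\delta)\to V_m(y,\alpha m)$ uniformly. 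All the linear-in-$(w_i^\delta,n_i^\delta)$ terms of \eqref{mdelta} therefore pass to the limit by weak--strong convergence. The only nonlinearity is $\tfrac{\delta}{2}(|\nabla w_i^\delta|^2-1)$, which tested against any smooth $\phi$ satisfies
\[
\left|\int_{\T^n}\tfrac{\delta}{2}(|\nabla w_i^\delta|^2-1)\,\phi\, dy\right|\le C\delta\|\phi\|_\infty \to 0,
\]
so it vanishes distributionally. This yields exactly \eqref{mvarP}, together with the zero-mean constraints which are preserved in the weak limit.

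\textbf{Uniqueness and regularity.} Given two solution triples, the difference $(c,v,\mu)$ solves the homogeneous version of \eqref{mvarP}. Testing (i) against $\mu$, (ii) against $v$, integrating by parts and subtracting yields the identity
\[
\int_{\T^n}V_m(y,\alpha m)\,\alpha\,\mu^2\,dy+\int_{\T^n}m\,|\nabla v|^2\,dy=0.
\]
Since $V_m\ge \gamma_K>0$ by \eqref{incr} (applied on a compact interval containing the range of $\alpha m$) and $m\ge c_0>0$ by Proposition \ref{cellprop}, we deduce $\mu\equiv 0$ and $\nabla v\equiv 0$; the zero-mean condition then forces $v\equiv 0$, and integrating (i) of \eqref{mvarP} over $\T^n$ gives $c=0$. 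As a by-product, the full family $(w_i^\delta,n_i^\delta)$ converges, not merely a subsequence. For regularity, $\tilde m_i\in L^2$ and $C^{0,\gamma}$ coefficients in (i) of \eqref{mvarP} give $\tilde u_i\in W^{2,p}$ for some $p>1$ by Calder\'on--Zygmund; plugging this into (ii) and invoking the Fokker--Planck estimates used in Proposition \ref{cellprop} (cf.\ \cite[Thm 4.2]{bbook}, \cite[Lemma 2.3]{be}) upgrades $\tilde m_i$ to $W^{1,p}$ for every $p>1$. Bootstrapping between (i) and (ii) via Schauder then gives $\tilde u_i\in C^{2,\gamma}$, completing the claim.

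\textbf{Expected main obstacle.} The chief delicate step is the passage to the limit in the nonlinear term $\tfrac{\delta}{2}|\nabla w_i^\delta|^2$ of (i) of \eqref{mdelta}: only weak $H^1$ convergence of $w_i^\delta$ is available, so $|\nabla w_i^\delta|^2$ does not a priori converge to $|\nabla\tilde u_i|^2$. The $\delta$ prefactor together with the uniform $H^1$ bound from Proposition \ref{estidelta} resolves this, forcing decay in $L^1$ and hence distributional vanishing. All remaining coefficients either converge uniformly or enter linearly, so no further compactness is needed.
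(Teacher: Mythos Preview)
Your argument is correct, and the uniqueness step is exactly the monotonicity computation the paper uses. However, your existence proof takes a genuinely different route from the paper's.

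The paper proves existence of $(\tilde u_i,\tilde m_i,c_i)$ \emph{directly} by a Schauder fixed-point argument on the linear system \eqref{mvarP}: fix $\bar n\in C^{0,\gamma}$ with zero mean, solve (i) for $(v,c_i^{\bar n})$, plug $v$ into (ii) to obtain $l$, and show that the resulting map $\bar n\mapsto l$ is continuous and compact on a closed convex subset of $C^{0,\gamma}$. Regularity then comes for free from the function spaces in which the fixed point is constructed. Only afterwards, in Theorem \ref{hbar}, does the paper use the bounds of Proposition \ref{estidelta} together with the uniqueness just established to identify the weak limit of $(w_i^\delta,n_i^\delta)$ with $(\tilde u_i,\tilde m_i)$.

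You instead collapse these two steps: you obtain existence as the weak $H^1\times L^2$ limit of the difference quotients, leveraging Proposition \ref{estidelta} directly. This is more economical---it avoids setting up a separate fixed-point scheme and simultaneously delivers the convergence statement that the paper proves later as Theorem \ref{hbar}. The trade-off is that your argument is tied to the specific origin of \eqref{mvarP} as a linearization of \eqref{ucor}, whereas the paper's fixed-point proof would apply to any linear system of that structural form; moreover, your regularity has to be recovered by bootstrap from weak $H^1\times L^2$ information (the sketch you give is essentially correct, though the first step for $\tilde m_i$---from $L^2$ to $H^1$ via the very weak formulation of (ii)---deserves one explicit sentence).
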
 \begin{proof}
Let us observe that the adjoint operator to the one in (i) coincides with the operator in (ii) of \eqref{ucor} which has a $1$-dimensional kernel (see \cite[Thm4.3]{bbook}), so the compatibility condition for the existence of a solution to (i) reads as follows
\begin{equation}\label{ci} c_i(P,\alpha)= \int_{\mathbb{T}^n} \left((\nabla u+P)_i m -V_m(y,\alpha m)\alpha \tilde m_i m\right)dy. \end{equation}

For the uniqueness of the solution of the system, the argument is quite standard. Let $(\tilde u_i, \tilde m_i, c_i(P,\alpha))$ and $(\tilde v_i, \tilde n_i, k_i(P,\alpha))$ be two solutions to \eqref{mvarP}. Let $\tilde u=\tilde u_i-\tilde v_i$ and $\tilde m=\tilde m_i-\tilde n_i$. Then $\tilde u, \tilde m$ solve the following system:
\[\begin{cases} (i) &  -\Delta\tilde u + \nabla\tilde u\cdot \nabla u   -V_m(y,\alpha m)\alpha\tilde m=c_i(P,\alpha)-k_i(P,\alpha)\\ (ii)&
-\Delta \tilde m-{\rm div}\big(\nabla u\tilde m  \big) ={\rm div }(m \nabla \tilde u) \\
& \int_{\mathbb{T}^n}\tilde m =\int_{\mathbb{T}^n}\tilde u =0. \end{cases}\]
We multiply (i) by $\tilde m$ and (ii) by $\tilde u$, subtract equation (ii) from (i) and integrate on the torus $\mathbb{T}^n$: so we obtain, after some easy integrations by parts,
\[-\int_{\mathbb{T}^n}\left(V_m(y,\alpha m)\alpha\tilde m^2 + |\nabla u|^2 m \right)=0.\]
Due to   assumption\eqref{incr} on $V$ and on the fact that $m>0$, both terms in the previous integral are positive. This implies that $\tilde m=0$, so $\tilde m_i=\tilde n_i$, and moreover $c_i(P,\alpha)=k_i(P, \alpha)$ and $u_i=v_i$.

For the existence of a solution, we argue by standard fixed point argument (see e.g. \cite{be}, \cite{notecard}). We sketch briefly the argument. First of all note that both equations in
\eqref{mvarP} are linear, with coefficients in $C^{0,\gamma}$ (due to our assumptions on $V$ and to Proposition \ref{cellprop}).

Fix now  $\bar n\in C^{0,\gamma}(\mathbb{T}^n)$, with $\int_{\mathbb{T}^n}\bar  n=0$, and solve equation (i) with this fixed $\bar n$ in place of $\tilde m_i$. We obtain that there exists a unique constant $c_i^{\bar n}(P,\alpha)$, given by \eqref{ci}, for which the equation admits a   solution $v$. This solution is unique, by the constraint on the average, and smooth, say in $C^{2,\gamma}$ (since it is Lipschitz, and then we apply standard elliptic regularity theory).
Now, we replace $m$ with $v$ in equation (ii) in \eqref{mvarP} and solve it. We get that there  exists a unique solution $l\in W^{1,p}$ for every $p>1$, with the constraint that $\int_{\mathbb{T}^n}l =0$. Indeed the existence of a one parameter family of solutions to (ii) in $W^{1,2}$ is obtained by Fredholm alternative (see e.g. \cite{bbook}),  and uniqueness is obtained adding the constraint on the average. The enhanced regularity can be obtained as in \cite[Lemma2.3]{be}.

So, we constructed a map \begin{equation}\label{t} T: \mathcal{B}:=\left\{\bar n\in C^{0,\gamma} \ |\ \int_{\mathbb{T}^n} \bar n=0\right\}\longrightarrow
\mathcal{B}\end{equation} such that $T:\bar n\to  (v, c_i^n(P,\alpha))\to l$. The continuity of such map can be obtained as in \cite[Thm 3.1]{notecard}. Let $m_n$ be a sequence in $\mathcal{B}$ converging uniformly to $\bar n$; let $u_n$ be the solution to (i) with $m_n$ and $v$ the solution to (i) with $n$. Then $c_i^{m_n}(P, \alpha)\to c_i^{\bar n}(P, \alpha)$ and  $V_m(y,\alpha m)\alpha m_n\to V_m(y,\alpha m)\alpha\bar n$ uniformly. By stability of viscosity solutions, we get that $u_n\to v$ uniformly. Moreover $\nabla u_n$ are uniformly bounded in $C^{0,\gamma}$ (due to standard elliptic regularity theory and uniform convergence of the coefficients of equation (i), see also \cite[Lemma 2.2]{be}) so we can extract a subsequence  $\nabla u_n\to \nabla v$ uniformly. Let $\mu_n$ and $\nu$ be the solutions to (ii) with $\nabla u$ replaced respectively by $\nabla u_n$ and $\nabla v$, so $\mu_m=T(m_n)$ and $\nu=T(\bar n)$. By the $L^\infty$ uniform bound on $\nabla u_n$, we get that $\mu_n$ are uniformly bounded in $W^{1,p}$ for every $p>1$ (see \cite[Lemma 2.3]{be}) and then by Sobolev embedding, they are uniformly bounded in  $C^{0,\gamma}$. Passing to a converging subsequence, we get that $\mu_n\to l$ uniformly, and moreover $l$ is a weak solution to (ii), with $\nabla v$. By uniqueness we conclude that  $l=\nu$, moreover again by uniqueness of the limits, we have convergence for the full sequence. This gives continuity of the operator $T$.
Compactness can be obtained as in \cite[Thm 2.1]{be}. This allows to conclude by Schauder's fixed point theorem.
\end{proof}

\begin{theorem} \label{hbar} Let $\alpha>0$. 
For every $i=1,\dots,n$,
\[\lim_{\delta\to 0} \frac{\bar H(P+\delta e_i, \alpha)-\bar H(P, \alpha)}{\delta}=  \bar b_i(P,\alpha)-\int_{\mathbb{T}^n} V_m(y,\alpha m)\alpha \tilde m_i m dy  \]
 where $m$ is the solution to (ii) in \eqref{ucor} and $\tilde m_i $ is the solution to (ii) in \eqref{mvarP}.
\end{theorem}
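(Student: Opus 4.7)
The plan is to pass to the limit $\delta\to 0$ in the linearized system \eqref{mdelta} satisfied by the difference quotients $(w_i^\delta, n_i^\delta)$ from \eqref{deltaw}, and to identify the limit with the unique solution $(\tilde u_i,\tilde m_i,c_i(P,\alpha))$ of the auxiliary system \eqref{mvarP} furnished by Lemma \ref{lemmavar}. By Proposition \ref{estidelta}, the quantities $\|\nabla w_i^\delta\|_{L^2}$, $\|n_i^\delta\|_{L^2}$ and $\delta^{-1}\bigl(\bar H(P+\delta e_i,\alpha)-\bar H(P,\alpha)\bigr)$ are uniformly bounded as $\delta\to 0$, so along a subsequence $\delta_k\to 0$ one extracts weak $L^2$ limits $\nabla w_i^{\delta_k}\rightharpoonup \nabla w$ and $n_i^{\delta_k}\rightharpoonup n$, together with a numerical limit $c\in\R$ of the difference quotients. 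The mean-zero constraints pass to the limit, and Poincar\'e's inequality even gives a uniform $H^1(\T^n)$ bound on $w_i^\delta$.

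Next, I would pass to the limit in the weak formulation of \eqref{mdelta}, testing each equation against an arbitrary smooth $\phi$. The crucial input is Proposition \ref{4.2}: $\nabla u_i^\delta\to \nabla u$ and $m_i^\delta\to m$ in $C^{0,\gamma}$, hence $V_m(y,\alpha\tilde n^\delta)\to V_m(y,\alpha m)$ uniformly on $\T^n$ (since $\tilde n^\delta$ lies between $m$ and $m_i^\delta$). Combining strong convergence of these coefficients with weak $L^2$ convergence of $(\nabla w_i^\delta,n_i^\delta)$, every linear term in \eqref{mdelta} passes to its natural limit. The only genuinely nonlinear contribution is $\frac{\delta}{2}(|\nabla w_i^\delta|^2-1)$ in equation (i); tested against $\phi$ this contributes $O(\delta)$ thanks to the uniform $L^2$ bound on $\nabla w_i^\delta$, so it drops out. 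Consequently $(w,n)$ together with the constant $c$ satisfies the weak form of \eqref{mvarP}.

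By the uniqueness statement in Lemma \ref{lemmavar}, $(w,n,c)=(\tilde u_i,\tilde m_i,c_i(P,\alpha))$. Since the limit is independent of the extracted subsequence, the full family converges and
\[
\lim_{\delta\to 0}\frac{\bar H(P+\delta e_i,\alpha)-\bar H(P,\alpha)}{\delta}=c_i(P,\alpha).
\]
Combining this with the explicit formula \eqref{ci} for $c_i(P,\alpha)$ and recalling that $\int_{\T^n}(\nabla u+P)_i m\,dy=\bar b_i(P,\alpha)$ by \eqref{bbar} yields the claimed identity.

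The main technical obstacle is the passage to the limit in products where only weak convergence of the unknowns is available. The strategy is to split each such product into a piece whose coefficient converges uniformly (by Proposition \ref{4.2}) times a factor converging weakly in $L^2$, plus a remainder controlled by the uniform $L^2$ bounds from Proposition \ref{estidelta}; in this way the quadratic term $\frac{\delta}{2}|\nabla w_i^\delta|^2$ is the only potentially dangerous contribution, and it is rescued by the explicit prefactor $\delta$.
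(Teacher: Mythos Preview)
Your proposal is correct and follows essentially the same strategy as the paper's proof: extract weakly convergent subsequences of $(\nabla w_i^\delta,n_i^\delta)$ and of the difference quotient using the a~priori bounds of Proposition~\ref{estidelta}, pass to the limit in system~\eqref{mdelta} using the strong convergence of the coefficients from Proposition~\ref{4.2} (equivalently~\eqref{limdelta}), identify the limit with the unique solution of~\eqref{mvarP} via Lemma~\ref{lemmavar}, and conclude through formula~\eqref{ci}. Your write-up is in fact more explicit than the paper's, which compresses the passage to the limit into the single phrase ``by stability and by uniqueness''; in particular, your observation that the quadratic term $\tfrac{\delta}{2}|\nabla w_i^\delta|^2$ is the only nonlinear contribution and is killed by the explicit prefactor~$\delta$ is exactly the point that makes the weak-$L^2$ compactness sufficient here.
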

\begin{proof}
Note that the coefficients of the system \eqref{mdelta}, due to \eqref{limdelta}, are converging to the coefficients of the system \eqref{mvarP}. Moreover $\frac{\bar H(P+\delta e_i, \alpha)-\bar H(P,\alpha)}{\delta}$ is bounded by \eqref{hbarlip}, then up to a subsequence, we can assume it is converging to some constant.  Moreover due to  the apriori bounds in Proposition \ref{estidelta}, we can extract subsequences $\nabla w^\delta_i, n^\delta_i$ converging weakly in $L^2$.  By stability and by uniqueness of the solution to \eqref{mvarP}, we get the convergence of $w^\delta_i, n^\delta_i$ to $\tilde u_i, \tilde m_i$ and  by uniqueness of the constant $c_i$ for which the system \eqref{mvarP} admits a solution we get that \[c_i(P,\alpha)=\lim_{\delta\to 0}\frac{\bar H(P+\delta e_i, \alpha)-\bar H(P,\alpha)}{\delta}\] which gives the desired conclusion, recalling formula \eqref{ci}.

\end{proof}
\subsection{Variation of $\bar H$ with respect to $\alpha$}
We shall proceed as in the previous section to compute the variation of $\bar H$ with respect to $\alpha$. 
\begin{lemma}\label{estialpha} Let $\alpha>0$. Then 
 $\bar H$ is locally Lipschitz in $\alpha$, i.e. there is a constant $C$ depending on  $P, \alpha$,  
such that, for any $\delta$ sufficiently small,  
\[\frac{\bar H(P,\alpha+\delta)-\bar H(P,\alpha)}{\delta}\leq C.\] 
\end{lemma}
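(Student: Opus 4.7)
The plan is to mimic the strategy of Proposition~\ref{estidelta}, shifting in $\alpha$ rather than $P$. Let $(u^\delta, m^\delta)$ solve \eqref{ucor} for $(P, \alpha+\delta)$ and set
$$
w^\delta := \frac{u^\delta - u}{\delta}, \qquad n^\delta := \frac{m^\delta - m}{\delta};
$$
both functions have zero mean by \eqref{ucor}-(iii). The idea is to derive a linearized system for $(w^\delta, n^\delta)$, obtain uniform $L^2$ bounds by a duality test, and then read off the Lipschitz estimate by integrating the $w^\delta$-equation.

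First I would subtract the two copies of \eqref{ucor}-(i) and divide by $\delta$. The quadratic term produces $f^\delta \cdot \nabla w^\delta$ with $f^\delta := \tfrac12(\nabla u^\delta + \nabla u) + P$. For the potential, the mean value theorem in the second argument of $V$, combined with the identity
$$
\frac{(\alpha+\delta)m^\delta - \alpha m}{\delta} = \alpha n^\delta + m^\delta,
$$
yields a term $V_m(y,\xi^\delta)(\alpha n^\delta + m^\delta)$ for some $\xi^\delta$ taking values between $\alpha m$ and $(\alpha+\delta) m^\delta$. The $m$-equation linearizes to
$$
-\Delta n^\delta - {\rm div}\bigl(n^\delta(\nabla u^\delta + P)\bigr) = {\rm div}(m \nabla w^\delta),
$$
i.e.\ \eqref{mdelta}-(ii) with $e_i$ replaced by $0$.

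Next I would test the $w^\delta$-equation against $n^\delta$ (the constant on the right drops out because $\int n^\delta = 0$) and the $n^\delta$-equation against $w^\delta$, then subtract. Integration by parts cancels the $\int \nabla w^\delta \cdot \nabla n^\delta$ terms, while the identity $f^\delta - (\nabla u^\delta + P) = -\tfrac{\delta}{2}\nabla w^\delta$ produces the coefficient $\tfrac12(m + m^\delta)$ in front of $|\nabla w^\delta|^2$. The upshot is
$$
\int_{\T^n} V_m(y,\xi^\delta) \alpha (n^\delta)^2 \, dy + \int_{\T^n} \tfrac{m + m^\delta}{2}|\nabla w^\delta|^2 \, dy = -\int_{\T^n} V_m(y,\xi^\delta) m^\delta n^\delta \, dy.
$$
By \eqref{incr}, \eqref{limdelta} and the positivity statement in Proposition~\ref{cellprop}, both coefficients on the left are bounded below by a positive constant uniform in small $\delta$; since $V_m m^\delta$ is uniformly bounded, Young's inequality absorbs the right-hand side and yields a uniform estimate $\|n^\delta\|_2^2 + \|\nabla w^\delta\|_2^2 \le C(P, \alpha)$.

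Finally, integrating the $w^\delta$-equation over $\T^n$ and bounding the resulting terms via this $L^2$ estimate together with the $L^\infty$ bounds on $f^\delta$, $V_m$ and $m^\delta$ delivers the claim. The only feature not already present in Proposition~\ref{estidelta} is the unsigned forcing term $V_m(y,\xi^\delta) m^\delta$, which arises because the $\alpha$-shift enters the nonlinearity $V(y, \alpha m)$ directly; this is the main point to watch, but its uniform $L^\infty$ bound makes it harmless under Young's inequality.
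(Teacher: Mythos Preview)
Your proposal is correct and follows essentially the same route as the paper: derive the linearized system for $(w^\delta,n^\delta)$, test the first equation with $n^\delta$ and the second with $w^\delta$, subtract to obtain the energy identity with coefficient $\tfrac12(m+m^\delta)$ and the extra forcing term $-\int V_m m^\delta n^\delta$, absorb the latter via Young's inequality, and finally integrate the $w^\delta$-equation against $1$. The only cosmetic difference is that the paper writes the quadratic part as $\tfrac{\delta}{2}|\nabla w^\delta|^2+\nabla w^\delta\cdot(\nabla u+P)$ rather than $f^\delta\cdot\nabla w^\delta$, which is the same expression.
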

\begin{proof} 
The proof is similar to that in Proposition \ref{estidelta}.

For $\delta$ small we consider the solution $(u^\delta, m^\delta)$ to the cell problem \eqref{ucor} associated to $(P, \alpha+\delta)$. By Proposition \ref{4.2},  we get that as $\delta\to 0$, $u^\delta\to u$ in $C^{1,\gamma}$, $m^\delta\to m$ in $C^0$. The functions \[w^\delta=\frac{u^\delta-u}{\delta}\quad n^\delta=\frac{m^\delta-m}{\delta}\] fulfill
\begin{equation}\label{mdeltaalpha}
\begin{cases} (i) &  -\Delta  w^\delta +  \delta \frac{|\nabla w^\delta|^2}{2}+ \nabla w^\delta\cdot (\nabla u+P)- V_m(y,\alpha \tilde{n}^\delta) (\alpha  n^\delta+m^\delta)\\ & =\frac{\bar H(P, \alpha+\delta)-\bar H(P,\alpha)}{\delta}\\ (ii)&
-\Delta n^\delta-{\rm div}\big((P+\nabla u^\delta)  n^\delta \big) ={\rm div }(m(\nabla   w^\delta)) \\
(iii) & \int_{\mathbb{T}^n} n^\delta=\int_{\mathbb{T}^n} w^\delta=0 \end{cases}
\end{equation}
for some $\tilde{n}^\delta(y)\in (m(y),(1+\frac{\delta}{\alpha}) m^\delta(y))$. 
We multiply the first equation by $n^\delta$ and the second by $w^\delta$, we subtract the second equation from the first and recalling that $w^\delta,n^\delta$ have mean zero, we get
\[\int \frac{(m^\delta+m)}{2}|\nabla w^\delta|^2+ V_m(y,\alpha \tilde{n}^\delta)  \alpha  (n^\delta)^2=-\int  V_m(y,\alpha \tilde{n}^\delta) n^\delta m^\delta. \]
By the Young inequality, this implies  \[\int \frac{(m^\delta+m)}{2}|\nabla w^\delta|^2+ \frac{1}{2}V_m(y,\alpha \tilde{n}^\delta)  \alpha  (n^\delta)^2\leq \frac{1}{2\alpha}\int  V_m(y,\alpha \tilde{n}^\delta) (m^\delta)^2  \] which in particular, recalling that $V_m\geq \gamma_K$ and $m^\delta+m>0$, implies $\|n^\delta\|_2^2+\|\nabla w^\delta\|_2^2\le C$, with a constant depending on $m, u, P, \alpha$. 

By testing the first equation in \eqref{mdeltaalpha} by $1$ and integrating over~$\mathbb{T}^n$
these  bounds imply the desired local Lipschitz continuity.
\end{proof}
\begin{lemma}\label{lemmavaralpha}
Let $(u,m)$ the solution to \eqref{ucor}.
Then  there  exists a unique $k(P,\alpha)$ such that there exists a solution $(\bar u, \bar m)$ to the system
\begin{equation}\label{mvaralpha}
\begin{cases} (i) &  -\Delta\bar u+ \nabla\bar u\cdot (\nabla u+P)  -V_m(y,\alpha m)\alpha\bar m -V_m(y,\alpha m) m =k(P,\alpha)\\ (ii)&
-\Delta \bar m-{\rm div}\big((P+\nabla u)\bar m \big) ={\rm div }(m\nabla \bar u) \\
& \int_{\mathbb{T}^n}\bar m=\int_{\mathbb{T}^n}\bar u=0. \end{cases}
\end{equation} 
Moreover the solution is unique and smooth, i.e. 
$(\bar u, \bar m)	\in C^{2,\gamma}\times  W^{1,p}$ for all $\gamma \in (0,1)$ and all $p>1$.

Moreover \[k(P,\alpha)=   -\int_{\mathbb{T}^n}\left[ V_m(y,\alpha m) (m+\alpha\bar m)^2 +\alpha m|\nabla \bar u|^2\right] dy.\]
\end{lemma}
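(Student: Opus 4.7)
The proof closely parallels Lemma \ref{lemmavar}, so I would largely import the same machinery and concentrate on what is genuinely new, namely the closed formula for $k(P,\alpha)$.

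\textbf{Existence of $k(P,\alpha)$ and $(\bar u,\bar m)$.}  The linear operator $L\bar u := -\Delta \bar u+\nabla\bar u\cdot(\nabla u+P)$ acting on zero-mean functions has formal adjoint $L^\ast f = -\Delta f - \mathrm{div}((\nabla u+P)f)$, i.e.\ exactly the operator appearing in (ii) of \eqref{ucor}. By Proposition \ref{cellprop} its kernel is one-dimensional, spanned by $m>0$. Hence, for any fixed $\bar m\in C^{0,\gamma}(\T^n)$ with $\int\bar m=0$, the Fredholm solvability condition for (i) uniquely determines
\begin{equation}\label{kcompat}
k(P,\alpha) = -\int_{\T^n} V_m(y,\alpha m)\,m\,(m+\alpha\bar m)\,dy.
\end{equation}
Plugging in and solving yields $\bar u\in C^{2,\gamma}$; then solving (ii) (which is solvable by Fredholm alternative, the solution being unique under the mean-zero constraint) produces $\bar m\in W^{1,p}$. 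This defines a map $T:\mathcal B\to\mathcal B$ on the closed set $\mathcal B=\{\bar n\in C^{0,\gamma}:\int\bar n=0\}$ exactly as in \eqref{t}. Its continuity and compactness follow verbatim from the arguments in Lemma \ref{lemmavar} (stability of viscosity solutions, uniform $C^{0,\gamma}$-bounds on $\nabla\bar u$, uniform $W^{1,p}$-bounds on $\bar m$, and Sobolev embedding). Schauder's fixed point theorem yields a solution; a standard bootstrap gives the claimed regularity.

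\textbf{Uniqueness.}  If $(\bar u_j,\bar m_j,k_j)$, $j=1,2$, are two solutions, set $\tilde u=\bar u_1-\bar u_2$, $\tilde m=\bar m_1-\bar m_2$. Note that the inhomogeneous term $V_m(y,\alpha m)m$ in (i) drops out on subtracting, so $(\tilde u,\tilde m)$ solves a system of the same form as in the uniqueness step of Lemma \ref{lemmavar}. Testing (i) against $\tilde m$ and (ii) against $\tilde u$, integrating, and subtracting gives
\[
\int_{\T^n}\bigl(V_m(y,\alpha m)\,\alpha\,\tilde m^2+m|\nabla\tilde u|^2\bigr)\,dy=0,
\]
and since $V_m\ge\gamma_K>0$ on the relevant range of $m$ by \eqref{incr} and $m\ge c>0$ by Proposition \ref{cellprop}, we deduce $\tilde m\equiv 0$, $\nabla\tilde u\equiv 0$, whence $\tilde u\equiv 0$ (zero mean) and $k_1=k_2$.

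\textbf{Formula for $k(P,\alpha)$.}  This is the new ingredient.  Testing (i) of \eqref{mvaralpha} against $m$ and integrating by parts, the principal part $\int m(-\Delta\bar u+\nabla\bar u\cdot(\nabla u+P))\,dy$ vanishes because it equals $-\int\bar u[\Delta m+\mathrm{div}(m(\nabla u+P))]\,dy=0$ by (ii) of \eqref{ucor}. Using $\int m=1$ this recovers \eqref{kcompat}:
\begin{equation}\label{step1}
k(P,\alpha)=-\int_{\T^n} V_m(y,\alpha m)\,m\,(m+\alpha\bar m)\,dy.
\end{equation}
Next, testing (i) against $\alpha\bar m$ (which has zero mean), the principal part becomes $\alpha\int(\nabla\bar u\cdot\nabla\bar m+(\nabla u+P)\bar m\cdot\nabla\bar u)\,dy$, and this quantity is identified from testing (ii) against $\bar u$ as $-\alpha\int m|\nabla\bar u|^2\,dy$. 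We thus obtain
\begin{equation}\label{step2}
-\alpha\int_{\T^n} m|\nabla\bar u|^2\,dy=\int_{\T^n} V_m(y,\alpha m)\,\alpha\bar m\,(m+\alpha\bar m)\,dy.
\end{equation}
Adding \eqref{step1} and \eqref{step2} and recognizing the sum on the right as $-\int V_m(m+\alpha\bar m)^2\,dy - \alpha\int m|\nabla\bar u|^2 + k$ versus $k$ itself, i.e.\ writing $V_m(m+\alpha\bar m)^2=V_m\,m(m+\alpha\bar m)+V_m\,\alpha\bar m(m+\alpha\bar m)$, gives
\[
k(P,\alpha)=-\int_{\T^n}\Bigl[V_m(y,\alpha m)(m+\alpha\bar m)^2+\alpha m|\nabla\bar u|^2\Bigr]\,dy,
\]
as required.

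The only non-routine step is the algebraic combination of \eqref{step1} and \eqref{step2}; the existence/uniqueness/regularity parts are essentially an exact replay of Lemma \ref{lemmavar} with the inhomogeneous term $V_m(y,\alpha m)m$ playing no role beyond inducing the extra summand $m^2$ in the compatibility condition.
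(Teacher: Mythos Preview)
Your proof is correct and follows the same approach as the paper: existence, uniqueness and regularity are referred back to Lemma \ref{lemmavar}, and the representation formula is obtained by testing (i) against $m+\alpha\bar m$ (the paper does this in one step, while you split it into the two tests \eqref{step1} and \eqref{step2} against $m$ and $\alpha\bar m$ separately and then add, which is an immaterial difference).
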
 
\begin{proof} The proof is completely analogous (with some minor modifications) to the proof of Lemma \ref{lemmavar}, so we omit it.

To obtain the representation formula for $k(P,\alpha)$, we multiply equation (i) in  \eqref{mvaralpha} by $(m+\alpha\bar m)$, where $m$ solves (ii) in \eqref{ucor}, and integrate on $\mathbb{T}^n$. We obtain, after some integration by parts, 
\[k(P, \alpha)= \int_{\mathbb{T}^n} \alpha\ {\rm div }(m \nabla \bar u )\bar u -V_m(y,\alpha m)(m+\alpha\bar m)^2 dy\] which gives the desired conclusion. \end{proof}
\begin{theorem} \label{estideltaalpha} Let $\alpha>0$. 
  \[\lim_{\delta\to 0}\frac{\bar H(P, \alpha+\delta)-\bar H(P, \alpha)}{\delta}=  -\int_{\mathbb{T}^n}\left[ V_m(y,\alpha m)  (m+\alpha\bar m)^2 +\alpha m|\nabla \bar u|^2\right] dy<0 \]
 where $m$ is the solution to (ii) in \eqref{ucor} and $(\bar u, \bar m)$ is the unique solution to \eqref{mvaralpha}.
In particular   $\bar H$ is strictly decreasing in $\alpha$. 
\end{theorem}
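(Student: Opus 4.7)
The plan is to follow the same scheme used in the proof of Theorem \ref{hbar}: extract convergent subsequences from the $\delta$-family of "discrete derivatives", pass to the limit in the linearized system \eqref{mdeltaalpha}, and identify the limit with the auxiliary system \eqref{mvaralpha} whose solution is unique by Lemma \ref{lemmavaralpha}. Then the representation of $k(P,\alpha)$ already proved in Lemma \ref{lemmavaralpha} directly yields the formula, and strict negativity follows from the sign of each term on the right-hand side.

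More precisely, for $\delta$ small let $(u^\delta,m^\delta)$ be the solution of \eqref{ucor} at $(P,\alpha+\delta)$ and set $w^\delta=(u^\delta-u)/\delta$, $n^\delta=(m^\delta-m)/\delta$, which solve \eqref{mdeltaalpha}. Proposition \ref{4.2} gives $u^\delta\to u$ in $C^{1,\gamma}$ and $m^\delta\to m$ in $C^0$, hence $\tilde n^\delta\to m$ uniformly (recall $\tilde n^\delta\in(m,(1+\delta/\alpha)m^\delta)$), so $V_m(y,\alpha\tilde n^\delta)\to V_m(y,\alpha m)$ uniformly. By Lemma \ref{estialpha}, $\|n^\delta\|_2$ and $\|\nabla w^\delta\|_2$ are bounded and $\tfrac{\bar H(P,\alpha+\delta)-\bar H(P,\alpha)}{\delta}$ is bounded; along subsequences, call them $\bar m^*$, $\nabla\bar u^*$ and $k^*$ the weak $L^2$ resp.\ real limits. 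The term $\tfrac{\delta}{2}|\nabla w^\delta|^2$ in (i) of \eqref{mdeltaalpha} tends to zero in $L^1$ since $\|\nabla w^\delta\|_2^2$ is bounded. Bootstrapping the linear elliptic equation (i) (with uniformly H\"older-continuous coefficients, as in Lemma \ref{lemmavar}) upgrades the convergence of $w^\delta$ to strong convergence of $\nabla w^\delta$, enough to pass to the limit in the quadratic right-hand side of equation (ii) in the distributional sense. Thus $(\bar u^*,\bar m^*, k^*)$ solves \eqref{mvaralpha}, and by the uniqueness statement in Lemma \ref{lemmavaralpha} we deduce $\bar u^*=\bar u$, $\bar m^*=\bar m$, $k^*=k(P,\alpha)$. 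Since the limit is independent of the extracted subsequence, the full limit exists and equals $k(P,\alpha)$, which by the representation formula of Lemma \ref{lemmavaralpha} gives the stated expression.

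Strict negativity is obtained as follows. By Proposition \ref{cellprop} we have $m\geq c>0$ on $\T^n$, and by assumption \eqref{incr} one has $V_m(y,\alpha m)\geq \gamma_K>0$ on the relevant compact range. Therefore the integrand is at least $\gamma_K(m+\alpha\bar m)^2+\alpha m|\nabla\bar u|^2\geq 0$. If the integral vanished, one would have $m+\alpha\bar m\equiv 0$ almost everywhere, but integrating over $\T^n$ gives $1+\alpha\cdot 0 = 0$, which is impossible; hence the integral is strictly positive and the limit strictly negative. This in turn proves strict monotonicity of $\bar H$ in $\alpha$, sharpening the non-strict monotonicity already established in Proposition \ref{4.2}.

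The main technical obstacle is verifying, in the passage to the limit, that the nonlinear term $\tfrac{\delta}{2}|\nabla w^\delta|^2$ in equation (i) of \eqref{mdeltaalpha} genuinely vanishes and that the right-hand side ${\rm div}(m\,\nabla w^\delta)$ of equation (ii) converges in the sense of distributions; both require upgrading the $L^2$ bound on $\nabla w^\delta$ to a compactness statement. This is handled by viewing equation (i) of \eqref{mdeltaalpha} as a linear elliptic PDE for $w^\delta$ with coefficients that are uniformly bounded in $C^{0,\gamma}$ (in the spirit of \cite[Lemma 2.2]{be}) and a bounded right-hand side, so that $\|w^\delta\|_{C^{1,\gamma}}$ is uniformly controlled. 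Everything else is a routine adaptation of the arguments used in Theorem \ref{hbar}.
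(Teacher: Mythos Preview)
Your proposal is correct and follows essentially the same route as the paper, which simply declares the proof ``completely analogous'' to that of Theorem~\ref{hbar} and omits it; you have supplied exactly that analogous argument, together with a clean justification of strict negativity via $\int_{\T^n}(m+\alpha\bar m)=1\neq 0$. One small remark: the bootstrap to strong convergence of $\nabla w^\delta$ is not actually needed, since after observing that $\tfrac{\delta}{2}|\nabla w^\delta|^2\to 0$ in $L^1$ every remaining term in \eqref{mdeltaalpha} is linear in $(w^\delta,n^\delta)$ with uniformly convergent coefficients, so weak $L^2$ convergence already suffices to pass to the limit distributionally (in particular the right-hand side of (ii) is ${\rm div}(m\nabla w^\delta)$, which is linear---not quadratic---in $\nabla w^\delta$).
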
 
\begin{proof}The proof is completely analogous (with some minor modifications) to the proof of Theorem \ref{hbar}, so we omit it.
\end{proof} 
\subsection{Regularity properties of $\bar b$}

\begin{theorem} \label{bbaresti} Let $P\in\mathbb{R}^n$ and  $\alpha>0$. 
Then $\bar b$ is locally Lipschitz continuous with respect to $P$ and $\alpha$. Moreover 
\[\lim_{\delta\to 0}\frac{\bar b(P+\delta e_i, \alpha)-\bar b(P,\alpha)}{\delta}= e_i+ \int_{\mathbb{T}^n}\left[\tilde m_i \nabla u +m \nabla \tilde u_i\right]dy\]
where $(\tilde u_i, \tilde m_i)$ is the solution to \eqref{mvarP} and
\[\lim_{\delta\to 0}\frac{\bar b(P, \alpha+\delta)-\bar b(P,\alpha)}{\delta}=  \int_{\mathbb{T}^n}\left[\bar m \nabla u  +m \nabla \bar u\right]dy\]where $(\bar u, \bar m)$ is the solution to \eqref{mvaralpha}.
\end{theorem}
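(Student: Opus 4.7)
The plan is to express $\bar b(P,\alpha)=\int_{\T^n} m(\nabla u+P)\,dy$ in finite-difference form, divide by the increment, and then pass to the limit by combining the $L^2$-bounds on the scaled differences $(w^\delta, n^\delta)$ with the strong compactness of the underlying correctors provided by Proposition \ref{4.2}.

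For the $P$-direction, fix $i$ and use the notations $u^\delta_i, m^\delta_i, w^\delta_i, n^\delta_i$ from \eqref{deltaw}. A direct manipulation gives
\begin{equation*}
\frac{\bar b(P+\delta e_i,\alpha)-\bar b(P,\alpha)}{\delta}
=\int_{\T^n}\Bigl[n^\delta_i(\nabla u^\delta_i+P+\delta e_i)+m(\nabla w^\delta_i+e_i)\Bigr]\,dy.
\end{equation*}
By Proposition \ref{estidelta}, $\|n^\delta_i\|_2^2+\|\nabla w^\delta_i\|_2^2\le C$, and by Proposition \ref{4.2} the family $\{\nabla u^\delta_i\}$ is uniformly bounded in $L^\infty$ and $m$ is bounded; hence the Cauchy--Schwarz inequality immediately yields the local Lipschitz continuity of $\bar b$ in $P$. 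To identify the limit, recall from Proposition \ref{4.2} that $u^\delta_i\to u$ in $C^{1,\gamma}$ and $m^\delta_i\to m$ in $C^0$; together with the a priori $L^2$-bounds this lets us extract a subsequence along which $n^\delta_i\rightharpoonup\tilde m_i^*$ and $\nabla w^\delta_i\rightharpoonup\nabla\tilde u_i^*$ weakly in $L^2$. Passing to the limit in the linear system \eqref{mdelta} (using strong convergence of $\nabla u^\delta_i, m^\delta_i$ and $V_m(y,\alpha\widetilde n^\delta)$) produces a solution of the limiting system \eqref{mvarP}; by the uniqueness asserted in Lemma \ref{lemmavar}, $(\tilde m_i^*,\tilde u_i^*)=(\tilde m_i,\tilde u_i)$ and the full limit exists. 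Passing to the limit in the finite-difference identity above and using $\int_{\T^n}\tilde m_i\,dy=0$ and $\int_{\T^n}m\,dy=1$ yields
\begin{equation*}
\lim_{\delta\to 0}\frac{\bar b(P+\delta e_i,\alpha)-\bar b(P,\alpha)}{\delta}
=e_i+\int_{\T^n}\bigl[\tilde m_i\,\nabla u+m\,\nabla\tilde u_i\bigr]\,dy,
\end{equation*}
as required.

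The $\alpha$-direction is handled in exactly the same fashion. Let $u^\delta, m^\delta, w^\delta, n^\delta$ be as in Lemma \ref{estialpha}. The analogous finite-difference identity reads
\begin{equation*}
\frac{\bar b(P,\alpha+\delta)-\bar b(P,\alpha)}{\delta}
=\int_{\T^n}\bigl[n^\delta(\nabla u^\delta+P)+m\,\nabla w^\delta\bigr]\,dy,
\end{equation*}
and the proof of Lemma \ref{estialpha} supplies the uniform bound $\|n^\delta\|_2^2+\|\nabla w^\delta\|_2^2\le C$, giving local Lipschitz continuity in $\alpha$. Passing to a weak $L^2$ limit, using the strong convergence of $u^\delta, m^\delta$ to $u, m$ from Proposition \ref{4.2} to pass to the limit in the coefficients of \eqref{mdeltaalpha}, and invoking uniqueness of the system \eqref{mvaralpha} (Lemma \ref{lemmavaralpha}), we identify the weak limit of $(n^\delta,\nabla w^\delta)$ as $(\bar m,\nabla\bar u)$. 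Since $\int_{\T^n}\bar m\,dy=0$, the $P$-term drops out and we obtain
\begin{equation*}
\lim_{\delta\to 0}\frac{\bar b(P,\alpha+\delta)-\bar b(P,\alpha)}{\delta}
=\int_{\T^n}\bigl[\bar m\,\nabla u+m\,\nabla\bar u\bigr]\,dy.
\end{equation*}

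The main technical obstacle is the passage to the limit in the product $n^\delta_i(\nabla u^\delta_i+P+\delta e_i)$ (and its analogue for $\alpha$), which involves only weak convergence of $n^\delta_i$; this is resolved by the $C^{1,\gamma}$-continuity of the map $(P,\alpha)\mapsto u$ established in Proposition \ref{4.2}, which turns the second factor into a strongly convergent sequence in $L^\infty$, so that the product converges in the distributional sense and integration against the characteristic function of $\T^n$ is legitimate. Uniqueness of the limiting systems \eqref{mvarP} and \eqref{mvaralpha} then upgrades subsequential convergence to convergence of the full family, which delivers the stated differentiability formulas.
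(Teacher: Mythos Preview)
Your proposal is correct and follows essentially the same approach as the paper. The paper's proof is terser: it writes down the same finite-difference identity, invokes Proposition~\ref{estidelta} and \eqref{limdelta} for the Lipschitz bound, and then simply cites ``the proof of Theorem~\ref{hbar}'' for the identification of the limit; you have unpacked exactly that argument (weak $L^2$ compactness of $(n^\delta_i,\nabla w^\delta_i)$, stability via strong convergence of the coefficients, and uniqueness in \eqref{mvarP}/\eqref{mvaralpha}) and made the weak--strong pairing in the product $n^\delta_i(\nabla u^\delta_i+P+\delta e_i)$ explicit.
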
 \begin{proof}
Note that \[\frac{\bar b(P+\delta e_i, \alpha)-\bar b(P,\alpha)}{\delta}=  \int_{\mathbb{T}^n}\left[n^\delta_i(\nabla u_i^\delta+P+\delta e_i)+m (\nabla w^\delta_i+e_i)\right]dy \] where $n^\delta_i$ and $w^\delta_i$ are defined in \eqref{deltaw}. Note that the r.h.s. in the previous equality    is  bounded by a constant depending on $P, \alpha, u,m$  due to Proposition \ref{estidelta} and to \eqref{limdelta}. This gives locally Lipschitz continuity of $\bar b$ with respect to $P$. 

So, by the proof of Theorem \ref{hbar} and by \eqref{limdelta},
 \[\lim_{\delta\to 0}\frac{\bar b(P+\delta e_i, \alpha)-\bar b(P,\alpha)}{\delta}=  \int_{\mathbb{T}^n}\left[\tilde m_i(\nabla u +P )+m (\nabla \tilde u_i+e_i)\right]dy.\]

Analogous argument gives the statement for the variation with respect to $\alpha$. \end{proof}

\section{Qualitative properties of the effective operators}\label{sect:qual}
In this section we provide some qualitative properties of the effective operators $\bar H, \bar b$: their asymptotic limit as $|P|\to +\infty$ and an explicit example where the effective system~\eqref{systemeffintro} loses the  MFG structure, 
i.e. $\nabla_P \bar H\neq \bar b$.

\begin{proposition}\label{proplimit}
Let $\bar H$ and $\bar b$ be the effective operators defined in \eqref{ucor} and \eqref{bbar}. Then
\[\lim_{|P|\to +\infty}\frac{\bar H(P, \alpha)}{|P|^2} =\frac{1}{2} \ \qquad\text{and}\qquad \lim_{|P|\to +\infty}\frac{|\bar b(P, \alpha)-P|}{|P|}=0, \]
uniformly for $\alpha \in [0,+\infty)$.
\end{proposition}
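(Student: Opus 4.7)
The first limit is immediate from the two-sided bound in Proposition~\ref{4.2}: dividing $\frac{|P|^2}{2}-\|V\|_\infty \leq \bar H(P,\alpha)\leq \frac{|P|^2}{2}$ by $|P|^2$ yields
\[\frac{1}{2}-\frac{\|V\|_\infty}{|P|^2}\leq \frac{\bar H(P,\alpha)}{|P|^2}\leq \frac{1}{2},\]
and since $\|V\|_\infty$ is independent of $\alpha$, the convergence is uniform in $\alpha\in[0,+\infty)$.

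For the second limit, the key observation is that, since $\int_{\T^n}m\,dy=1$, the definition \eqref{bbar} gives
\[\bar b(P,\alpha)-P=\int_{\T^n} m\,\nabla u\,dy,\]
so it suffices to bound $\bigl|\int m\,\nabla u\bigr|$ uniformly in $P$ and $\alpha$. To achieve this, the plan is to derive an energy identity by testing equation $(i)$ of \eqref{ucor} against $m$ and equation $(ii)$ against $u$, then subtracting. After an integration by parts (using $-\int m\Delta u =\int \nabla m\cdot\nabla u$ on the one hand, and $\int u\,\mathrm{div}(m(\nabla u+P))=-\int m(\nabla u+P)\cdot\nabla u$ on the other), the terms involving $P\cdot\int m\nabla u$ cancel and one arrives at the identity
\[\frac{1}{2}\int_{\T^n} m|\nabla u|^2\,dy=\frac{|P|^2}{2}-\int_{\T^n} mV(y,\alpha m)\,dy-\bar H(P,\alpha).\]

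Now I invoke the lower coercivity bound $\bar H(P,\alpha)\geq |P|^2/2-\|V\|_\infty$ and the non-negativity of $V$ to conclude
\[\int_{\T^n} m|\nabla u|^2\,dy\leq 2\|V\|_\infty,\]
which is a bound \emph{independent of both} $P$ \emph{and} $\alpha$. By Cauchy--Schwarz together with $\int m=1$,
\[\left|\int_{\T^n} m\,\nabla u\,dy\right|^2\leq \int_{\T^n} m\,dy\cdot\int_{\T^n} m|\nabla u|^2\,dy\leq 2\|V\|_\infty,\]
and hence $|\bar b(P,\alpha)-P|\leq \sqrt{2\|V\|_\infty}$ uniformly in $\alpha\in[0,+\infty)$. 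Dividing by $|P|$ and letting $|P|\to+\infty$ gives the second limit.

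The only non-routine step is spotting the right test functions to make the $P\cdot\int m\nabla u$ cross-terms cancel and thereby obtain a bound on $\int m|\nabla u|^2$ that does not grow with $|P|$; everything else is direct use of the coercivity bounds already proved. Once the energy identity is in hand, the conclusion is essentially a one-line Cauchy--Schwarz argument, and the uniformity in $\alpha$ comes for free because the only data-dependent quantity entering the estimate is $\|V\|_\infty$.
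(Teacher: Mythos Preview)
Your proof is correct and, in several respects, more streamlined than the paper's. For the first limit the paper does not simply quote the coercivity bound from Proposition~\ref{4.2}; instead it integrates equation~(i) over $\T^n$ and combines it with an $L^2$ estimate on $\nabla u/|P|$ obtained from an energy identity. Your one-line appeal to the two-sided bound $\frac{|P|^2}{2}-\|V\|_\infty\le\bar H\le\frac{|P|^2}{2}$ is both shorter and equally rigorous.

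For the second limit the two arguments are close in spirit but differ in the choice of test functions and in which structural assumption on $V$ is used. The paper tests equation~(i) against $m-1$ and equation~(ii) against $u$, subtracts, and then invokes the \emph{monotonicity} of $V$ to get $\int(m+1)|\nabla u|^2\le 2\int V(y,\alpha)\,dy$. You test equation~(i) against $m$ instead of $m-1$, arrive at the clean identity $\tfrac12\int m|\nabla u|^2=\tfrac{|P|^2}{2}-\int mV-\bar H$, and then close the estimate with the \emph{coercivity} lower bound on $\bar H$ rather than with monotonicity. Both routes feed the same weighted Cauchy--Schwarz at the end. Your version has the mild advantage that it uses only boundedness and nonnegativity of $V$, not its monotonicity in $m$, and that it recycles an inequality already proved rather than reproving a variant.
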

\begin{proof}
We multiply   equation (i) in \eqref{ucor} by $\frac{m-1}{|P|^2}$. We integrate and we get, recalling the periodicity assumptions  and that $m$ has mean $1$, 
\begin{eqnarray}\label{c1dim1}0=\int_{\mathbb{T}^n} -\frac{m\Delta u }{|P|^2}+\frac{1}{2|P|^2}\left|\nabla u +P\right|^2  (m-1)-\frac{V(y,\alpha m)}{|P|^2} (m-1) =\\ =\int_{\mathbb{T}^n}  -\frac{m \Delta u}{|P|^2}+\frac{|\nabla u |^2}{2|P|^2} (m-1)+ \nabla u\cdot \frac{P}{|P|^2} m-\frac{V(y,\alpha m)}{|P|^2} (m-1). \nonumber \end{eqnarray}
We multiply the second equation in \eqref{ucor} by $\frac{u}{|P|^2}$, integrate and we get, recalling the periodicity assumptions,
\begin{equation}\label{c2dim1}0=\int_{\mathbb{T}^n} -\frac{ u \Delta m }{|P|^2} + m \frac{|\nabla u|^2}{|P|^2}+m \nabla u \cdot \frac{P}{|P|^2}.  \end{equation}
We subtract \eqref{c2dim1} by \eqref{c1dim1} and get
\begin{equation}\label{c3dim1} \int_{\mathbb{T}^n}  \frac{|\nabla u|^2}{2|P|^2} ( m+1) +\frac{V(y,\alpha m)}{|P|^2}  (m-1)=  0. \end{equation}  
We observe that, by monotonicity of $V$, \[V(y,\alpha m)(m -1) \geq  V(y,\alpha)(m  -1).\]

So \eqref{c3dim1} gives
\begin{equation*}\label{c4dim1} \int_{\mathbb{T}^n}  \frac{|\nabla u |^2}{2|P|^2}(m+1) +\frac{V(y,\alpha )}{|P|^2}  m dy \leq   \int_{\mathbb{T}^n}\frac{V(y,\alpha )}{|P|^2} dy. \end{equation*}
Recalling that $V\geq 0$ and $m\geq 0$, we obtain \begin{equation}\label{c8dim1} \int_{\mathbb{T}^n} \frac{|\nabla u |^2}{2|P|^2}(m+1) dy \leq  \frac{ \int_{\mathbb{T}^n} V(y,\alpha )dy}{|P|^2}.\end{equation} 
Since $m\geq 0$,  by Proposition \ref{cellprop},   this implies that $\frac{\nabla u}{|P|}\to 0$ in $L^2$ as $|P|\to +\infty$ (and then also 
$\frac{  u}{|P|}\to 0$ in $L^2 $ as $|P|\to +\infty$ by the Poincar\'e  inequality).  

We multiply by $\frac{1}{|P|^2}$ the first equation in \eqref{ucor} and integrate to get 
\begin{equation}\label{c1} \int_{\mathbb{T}^n}  \frac{|\nabla u+P|^2}{2|P|^2}-\frac{V(y,\alpha m)}{|P|^2}dy=\frac{\bar H(P,\alpha)}{|P|^2}.\end{equation}
We recall that $V$ is bounded and moreover by the triangle inequality 
\[\frac{1}{\sqrt{2}}-\left(\int_{\mathbb{T}^n}  \frac{|\nabla u |^2}{ 2|P|^2}\right)^{1/2}\leq \left(\int_{\mathbb{T}^n}  \frac{|\nabla u+P|^2}{2|P|^2}dy\right)^{1/2}\leq \frac{1}{\sqrt{2}}+\left(\int_{\mathbb{T}^n}  \frac{|\nabla u |^2}{2 |P|^2}\right)^{1/2}.\] Then  \[\lim_{|P|\to +\infty}\left(\int_{\mathbb{T}^n}  \frac{|\nabla u+P|^2}{2|P|^2}dy\right)^{1/2}= \frac{1}{\sqrt{2}}.\]
So, letting  $|P|\to +\infty$ in \eqref{c1}, we get the desired result. 

By  definition \eqref{bbar}, we get \[\bar b(P, \alpha)-P=\int_{\mathbb{T}^n} m(y) \nabla u(y) dy.\] 
So, \[\frac{\left|\bar b(P, \alpha)-P\right|}{|P|} \leq \frac{1}{|P|}\int_{\mathbb{T}^n} m(y) |\nabla u(y)| dy.\] 
By H\"older's  inequality and \eqref{c8dim1} we get
\[\frac{\left|\bar b(P, \alpha)-P\right|}{|P|} \leq  \left(\int_{\mathbb{T}^n}  \frac{|\nabla u |^2}{  |P|^2} m dy \right)^{\frac{1}{2}} \left(\int_{\mathbb{T}^n}   m dy \right)^{\frac{1}{2}}\leq  \frac{ \sqrt{2\int_{\mathbb{T}^n} V(y,\alpha )dy}}{|P|}.\]
So, letting $|P|\to +\infty$, we conclude that  \[\frac{\left|\bar b(P, \alpha)-P\right|}{|P|}\to 0.\]
\end{proof}
%

\subsection{A case in which the limit system is not MFG}
We show that in general  the effective  system \eqref{systemeffintro} is not a MFG system by providing an example in dimension $n=1$  where  \begin{equation}\label{exunodim}\nabla_P \bar H(P,\alpha)\neq  \bar b (P,\alpha).\end{equation}

We assume that the potential has the following form  \[V(y,m)=v(y)+m,\textrm{with }v\geq 0.\] So the cell problem reads
\begin{equation}\label{celldim1} \begin{cases} 
 - \Delta u     \!+\!\frac{1}{2}|\nabla u  \!+\!P|^2\!- \alpha m -v(y) =
 \bar  H(P,\alpha), & y\in \mathbb{T}^n\\
 -\Delta m  - {\rm div}\left(m \left(\nabla u +P\right)\right)=0,& y\in\mathbb{T}^n \\   \int_{\mathbb{T}^n} u(y)dy =0 \qquad  \int_{\mathbb{T}^n} m(y)dy=1. & \end{cases} 
\end{equation}
Note that in this case the potential does not satisfy the standing assumptions,
 since it is not bounded. Moreover, we will see that the previous results still apply.
 
\begin{lemma}\label{lemmaunodim} Let $n\leq 3$. 

\begin{itemize}
\item[(i)] For every $P\in\mathbb{R}^n$, $\alpha\geq 0$ there exists a unique constant $\bar H(P,\alpha)$ such that
\eqref{celldim1} admits a solution $(u,m)$. Moreover this solution is unique, $m>0$ and 
$u\in C^{2,\gamma}$, $m\in W^{1,p}$ for every $\gamma\in (0,1)$ and $p>1$.
\item[(ii)] There hold $\lim_{|P|\to +\infty}\frac{\bar H(P, \alpha)}{|P|^2} =\frac{1}{2}$ and $\lim_{|P|\to +\infty}\frac{|\bar b(P, \alpha)-P|}{|P|}=0,$
locally uniformly for $\alpha \in [0,+\infty)$.
\item[(iii)] The maps  $(P,\alpha)\to \bar H(P, \alpha), \bar b(P,\alpha)$ are continuous. 
\item[(iv)]  There holds \[\nabla_P \bar H(P,\alpha)=     \bar b(P,\alpha)-\frac{\alpha}{2}\nabla_P(\|m\|_{L^2}^2).\] \end{itemize}
\end{lemma}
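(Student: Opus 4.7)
The plan is to reduce each of (i)--(iv) to the general results of Sections~\ref{sect:CP}--\ref{sect:qual}; the only wrinkle is that $V(y,m)=v(y)+m$ is unbounded, violating the standing hypothesis, but linearity in $m$ (so $V_m\equiv 1$) and $v\in C(\T^1)$ are enough to carry every argument through, modulo a restriction to $|P|$ large in (iii)--(iv).

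\emph{Part (i)} follows from Proposition~\ref{cellprop}, whose proof requires only continuity of $V$, for existence, regularity and positivity of $m$, combined with the Lasry--Lions duality identity from the monotonicity proof of Proposition~\ref{4.2}. Testing the HJ equation against $m_1-m_2$ and the continuity equation against $u_1-u_2$ and subtracting, one arrives at
\[ 0=-\alpha\int_0^1(m_1-m_2)^2\,dy-\tfrac12\int_0^1(m_1+m_2)(u_1'-u_2')^2\,dy, \]
which together with the mean-zero/mean-one normalisation forces $m_1=m_2$ and $u_1=u_2$.

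\emph{Part (ii)} is Proposition~\ref{proplimit} applied almost line by line: the only two places where boundedness of $V$ was invoked ($\int V(y,\alpha m)/|P|^2\to 0$ and the right-hand side of~\eqref{c8dim1}) now both reduce to $|P|^{-2}\bigl(\int v\,dy+\alpha\bigr)\to 0$, since $\int m=1$, while the monotonicity lemma $V(y,\alpha m)(m-1)\ge V(y,\alpha)(m-1)$ becomes the trivial identity $V(y,\alpha m)(m-1)=V(y,\alpha)(m-1)+\alpha(m-1)^2$. For \emph{part (iii)}, the coercivity $\bar H(P,\alpha)\sim |P|^2/2$ from (ii) gives local boundedness of $\bar H$ when $|P|$ is large; together with the a priori bounds of \cite{bf87} and \cite[Lemma 2.3]{be}, which only see $\|V(\cdot,\alpha m)\|_\infty$, i.e.\ $\|m\|_\infty$, this lets one copy the continuity argument of Proposition~\ref{4.2}.

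The core new computation is \emph{part (iv)}. Since $V_m\equiv 1$, Theorem~\ref{hbar} specialises to
\[ \frac{d}{dP}\bar H(P,\alpha)=\bar b(P,\alpha)-\alpha\int_0^1 \tilde m\, m\,dy, \]
where $\tilde m$ is the weak $L^2$-limit of $(m^\delta-m)/\delta$ in the proof of Theorem~\ref{hbar}, and hence represents the derivative $\partial_P m$. Differentiating under the integral then gives
\[ \int_0^1 m\,\tilde m\,dy=\tfrac12\frac{d}{dP}\int_0^1 m^2\,dy=\tfrac12\frac{d}{dP}\|m\|_{L^2}^2, \]
and substitution yields the formula. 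The main obstacle is the last point: rigorously verifying that Proposition~\ref{estidelta}, Lemma~\ref{lemmavar} and Theorem~\ref{hbar} all apply despite $V$ being unbounded. This reduces to (a) the uniform positive lower bound $V_m\equiv 1$ (cleaner than the generic $\gamma_K$ required in~\eqref{incr}), and (b) $L^\infty$-bounds on $m$ and $m^\delta$ uniform in small $\delta$, which follow from (i) applied at nearby values of $P$ once $|P|$ is large enough that (ii) supplies the controlled asymptotics of $\bar H$ — precisely why the restriction ``$P$ sufficiently large'' enters in (iii)--(iv).
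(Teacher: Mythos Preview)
Your proposal is correct and follows essentially the same route as the paper: each of (i)--(iv) is reduced to the corresponding general result (Proposition~\ref{cellprop}, Proposition~\ref{proplimit}, Proposition~\ref{4.2}, Theorem~\ref{hbar}), with the unboundedness of $V$ handled by the explicit form $V_m\equiv 1$ and the asymptotics from (ii) supplying the needed a~priori bounds on $\bar H$ and $m$ for large $|P|$. Your justification of $\int m\tilde m=\tfrac12\frac{d}{dP}\|m\|_{L^2}^2$ via the weak $L^2$ convergence of $(m^\delta-m)/\delta$ to $\tilde m$ together with the strong convergence $m^\delta\to m$ is slightly more explicit than the paper, which simply writes the identity.
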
 
\begin{proof}
(i) This existence result can be found in \cite[Thm 1.4]{cirant}, see also \cite{pv} and \cite{gps2}. 

%
%

(ii) Arguing as in the proof of Proposition \ref{proplimit}, we obtain \eqref{c3dim1}, which in this case reads
\begin{equation}\label{c5dim1}\int_{\mathbb{T}^n}   \frac{|\nabla u|^2}{2|P|^2} ( m+1) +\frac{v(y)+\alpha m}{|P|^2}  (m-1) dy=  0. \end{equation}
This implies, recalling that $v\geq 0$ 
\begin{equation}\label{c9dim1}\int_{\mathbb{T}^n}   (m+1)\frac{|\nabla u|^2}{2|P|^2}  +\frac{ \alpha m^2}{|P|^2}  \leq   \frac{\int_{\mathbb{T}^n} v(y)dy+\alpha }{|P|^2} \end{equation}
which in turns gives, since $m\geq 0$,  that   $\nabla u/|P|, u/|P| \to 0$ in $L^2$ as $|P|\to +\infty$, and that, locally uniformly in $\alpha\geq 0$, \[\frac{\bar H(P,\alpha)}{|P|^2}= \int_{\mathbb{T}^n} \frac{|\nabla u+P|^2}{2|P|^2}-\frac{v(y)+\alpha m}{|P|^2}dy\to \frac{1}{2} \qquad \textrm{as }|P|\to+\infty.\]
Moreover, arguing as in the proof of Proposition \ref{proplimit}, we get also 
\[\frac{|\bar b(P,\alpha)-P|}{|P| }\to 0 \qquad \textrm{as }|P|\to+\infty, \quad \textrm{loc. unif. in }\alpha\geq 0.\]

(iii) Integrating the first equation in \eqref{celldim1}  we get 
\[\bar H(P,\alpha)\geq \frac{|P|^2}{2} -\alpha-\int v(y)dy.\] Moreover, if we multiply the first equation in \eqref{celldim1} by $m$, the second by $u$ and integrate, we get,  that 
\[\bar H(P,\alpha)\leq \frac{|P|^2}{2}.\] 


Arguing again as in the proof of Proposition \ref{4.2}, we get the statement.

(iv) By (iii), the same arguments in the proof of Theorem \ref{hbar} apply. So,  using the explicit formula of $V$, we have that \[\frac{d}{d P_i} \bar H(P,\alpha)=   \bar b_i(P,\alpha)-\int_{\mathbb{T}^n} \alpha \tilde m_i m dy=   \bar b_i(P,\alpha)-\frac{\alpha}{2}\frac{d}{d P_i}(\|m\|_{L^2}^2) \] where $\tilde m_i$ is the solution to \eqref{mvarP}. 
\end{proof}
We observe that, for every $P$, $m$ cannot be a constant; actually, it would imply that also $u$ is constant which would contradict the first equation in~\eqref{celldim1}.   Jensen's inequality implies
\begin{equation}\label{jensen}
\|m\|_{L^2}>1=\|m\|_{L^1}^2.
\end{equation}
Assume for the moment that up to a subsequence 
\begin{equation}\label{cv2}\|m\|_{L^2}\to 1\qquad \textrm{as }|P|\to+\infty,\end{equation} then necessarily $\frac{d}{d P_i}(\|m\|_{L^2}^2)\neq 0$ at some  $i, P,\alpha$ and so \eqref{exunodim} holds true. Then we are left with the proof of~\eqref{cv2}.

\begin{proposition}\label{6.2} Let  $n=1$ and $(u,m)$ be the solution to \eqref{celldim1}. 
Then \eqref{cv2} holds true. 
\end{proposition}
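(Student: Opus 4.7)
The plan is to derive a single algebraic identity relating $\|m'\|_{L^2}^2$ to $\|m\|_{L^2}^2-1$ with an explicit $P^2$ prefactor, and then exploit the nonnegativity $\|m'\|_{L^2}^2\ge 0$ to conclude.

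First, multiplying the Fokker--Planck equation by $m$ and integrating by parts on $\T^1$, I obtain $\|m'\|_{L^2}^2 = \tfrac12\int_0^1 u''\,m^2\,dy$. Substituting $u''=\tfrac12(u'+P)^2-v-\alpha m-\bar H$ from the HJ equation, and then using the first integral $m(u'+P)=\bar b-m'$ of the Fokker--Planck equation (which gives $\int_0^1 m^2(u'+P)^2\,dy = \bar b^2+\|m'\|_{L^2}^2$ after integration and use of $\int m'=0$), I arrive at
\[
3\|m'\|_{L^2}^2 = \bar b^2 - 2\bar H\|m\|_{L^2}^2 - 2\alpha\int_0^1\! m^3\,dy - 2\int_0^1\! v\,m^2\,dy.
\]

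Next I would substitute the explicit values $\bar H = \tfrac12(P^2+\|u'\|_{L^2}^2)-\alpha-\int v$ (obtained by integrating the HJ equation over $\T^1$) and $\bar b = P + \int_0^1 mu'\,dy$. The combination $\bar b^2 - 2\bar H\|m\|_{L^2}^2$ produces the leading term $-P^2(\|m\|_{L^2}^2-1)$ together with lower-order corrections. Applying the sign inequalities (i) $\int m^3 \ge \|m\|_{L^2}^4\ge \|m\|_{L^2}^2$ (Jensen for the probability measure $m\,dy$ combined with $\|m\|_{L^2}\ge 1$), (ii) $(\int mu')^2\le \|m\|_{L^2}^2\|u'\|_{L^2}^2$ (Cauchy--Schwarz), and (iii) $\int vm^2\ge 0$, together with $\|m'\|_{L^2}^2\ge0$, leads to a quadratic inequality of the form $(P^2-C)\|m\|_{L^2}^2\le P^2+CP\|m\|_{L^2}$, where $C$ depends only on $\|v\|_\infty$ and on an upper bound for $\|u'\|_{L^2}$.

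The latter bound is uniform in $P$: from the derivation of \eqref{c9dim1} we have $\int_0^1 (m+1)|u'|^2\,dy \le 2(\int v+\alpha)$, and since this derivation uses only $v\ge 0$ and monotonicity of $V$, it applies also to the unbounded potential $V(y,m)=v(y)+m$ considered here. Solving the quadratic inequality for $P$ large yields $\|m\|_{L^2}\le 1+O(1/P)$, which combined with $\|m\|_{L^2}\ge 1$ from \eqref{jensen} gives \eqref{cv2}. The main obstacle is identifying this precise identity: one has to combine the two PDEs together with the integrated form $m(u'+P)=\bar b-m'$ of Fokker--Planck in exactly the right way so that the leading $P^2$-contributions from $\bar b^2$ and $2\bar H\|m\|_{L^2}^2$ align into $-P^2(\|m\|_{L^2}^2-1)$ rather than into divergent quantities.
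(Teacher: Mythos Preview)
Your argument is correct and takes a genuinely different route from the paper's. The paper first bounds $\|m\|_{L^2}$ uniformly in $P$ from \eqref{c5dim1}, then proves weak convergence $m\rightharpoonup 1$ by testing the Fokker--Planck equation with smooth $\phi$, next establishes a uniform $W^{1,2}$ bound on $\sqrt m$ (multiplying the Fokker--Planck equation by $\log m$), and finally invokes the compact embedding $W^{1,2}(0,1)\hookrightarrow C(0,1)$ to upgrade to strong convergence. Your approach instead exploits the one-dimensional first integral $m(u'+P)=\bar b-m'$ to obtain the closed identity $3\|m'\|_{L^2}^2=\bar b^2-2\bar H\|m\|_{L^2}^2-2\alpha\int m^3-2\int vm^2$, from which the bound $\|m\|_{L^2}^2-1=O(1/|P|)$ follows directly by sign considerations and the quadratic inequality. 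Your method is more elementary (no compactness or weak-convergence argument) and gives an explicit rate; the paper's method, on the other hand, extends to dimensions $n=2,3$ via Rellich--Kondrachov (as observed in the Remark following the Proposition), whereas your key identity relies on the first integral of the Fokker--Planck equation, which is special to $n=1$.
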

\begin{proof}
By \eqref{c9dim1} we get   that $\|m\|_{L^2}$  and $\|u\|_{W^{1,2}}$  are uniformly bounded with respect to $|P|$. 

Now we prove that $\sqrt{m}$ is uniformly bounded with respect to $P$ in $W^{1,2}$ using some argument of \cite[Lemma 2.5]{cllp1}.
We multiply the second equation  in \eqref{celldim1} by $\log m$ and  integrate.
We get, using periodicity, 
\[\int_{\mathbb{T}^1} \frac{| m'|^2}{m} + m' u' dy=0.\]So, by Cauchy-Schwarz inequality
\[\int_{\mathbb{T}^1}\frac{|m'|^2}{m}dy=-\int_{\mathbb{T}^1} m' u' dy\leq \frac{1}{2}\int_{\mathbb{T}^1}\frac{|m'|^2}{m}dy+ \frac{1}{2}\int_{\mathbb{T}^1} m |u'|^2 dy.\]
Therefore, by \eqref{c9dim1}, we conclude  \[\int_{\mathbb{T}^1}  |(\sqrt{m})'|^2 dy=\frac{1}{4}\int_{\mathbb{T}^1} \frac{|m'|^2}{m}dy\leq\frac{1}{4}\int_{\mathbb{T}^1} m |u'|^2 dy\leq \frac{\alpha+\int_{\mathbb{T}^1} v(y)dy}{4}.\]
Since $n=1$,   the embedding of $W^{1,2}(0,1)$ in $C(0,1)$ is compact, so we  conclude that, possibly passing to a subsequence, $\sqrt{m}\to \sqrt{m_\infty}$ uniformly as $|P|\to +\infty$. This implies that $m\to m_\infty$ in $C(0,1)$ as $P\to +\infty$, then also strongly in $L^2$. 

We are left to prove that $m_\infty=1$. 

It is sufficient to prove that $m\to 1$ weakly in $L^2$ as $|P|\to +\infty$. 
We apply  to the second equation in \eqref{celldim1}  a smooth periodic test function $\phi$ and we divide by $|P|$. By periodicity, we get
\[\int_{\mathbb{T}^1} m \phi' dy=\int_{\mathbb{T}^1} \frac{m  \phi''}{|P|} - \frac{m \phi' \cdot u'}{|P|} dy.
\]
Letting $|P|\to +\infty$ and recalling that $\nabla u/|P|\to 0$ in $L^2$ and that $\|m \|_{L^2}$ is uniformly bounded with respect to $|P|$, we get
\begin{equation}\label{pphi} 
\lim_{|P|\to+\infty}\int_{\mathbb{T}^1} m   \phi' dy=0, 
\end{equation}
for every smooth periodic  test function $\phi$. We observe that every smooth periodic function
$\psi$ can be written as $c+\phi'$, where $c=\int_0^1 \psi$ and $\phi$ is still periodic.
So, since $\int_0^1m=1$ we get  
\begin{equation*}\lim_{|P|\to+\infty}\int_{\mathbb{T}^1} (m-1)\psi dy = \lim_{|P|\to+\infty}\int_{\mathbb{T}^1} [(m-1) c  + m\phi'-\phi']dy =  \lim_{|P|\to+\infty}\int_{\mathbb{T}^1}  m\phi'dy=0.
\end{equation*}

%
%
%

\end{proof}
%
%

\section{Convergence for affine-constant initial data}\label{sect:affine}

We prove the homogenization result in the special case where  initial and terminal data of the system \eqref{systemintro}
are affine and constant, respectively. Our arguments are based on some a priori estimates which are inspired by estimates used in \cite{cllp1} for 
investigating the  long time behaviour for MFG systems.

Fix $P\in\mathbb{R}^n$ and consider the mean field game system
\begin{equation}\label{system1p}\begin{cases}
-u_t^\epsilon-\epsilon\Delta u^\epsilon +\frac{1}{2}|\nabla u^\epsilon|^2=V\left(\frac{x}{\epsilon}, m^\epsilon \right), & x\in \mathbb{R}^n, t\in (0,T)\\
m^\epsilon_t-\epsilon \Delta m^\epsilon-{\rm div}(m^\epsilon\nabla u^\epsilon)=0,& x\in \mathbb{R}^n, t\in (0,T)
\\ u^\epsilon(x,T)=P\cdot x\qquad m^\epsilon(x, 0)\equiv 1& x\in \mathbb{R}^n.
\end{cases}\end{equation}
Actually the initial data $P\cdot x$ is not periodic, so Proposition \ref{ex} does not apply directly.
In order to preserve the periodicity, for now on $\epsilon=\frac{1}{k}$, with $k\in \mathbb{N}$.  

We construct a solution as follows.

\begin{lemma}\label{lemmaper} There exists a  smooth solution $(u^\epsilon, m^\epsilon)$ to the mean field game system
\eqref{system1p}. Moreover  the maps $x\mapsto u^\epsilon(x,t)-P\cdot x$, $x\mapsto \nabla u^\epsilon(x,t)$, $x\mapsto m^\epsilon(x,t)$ are $\epsilon\mathbb{Z}^n$ periodic for every $t\in [0,T]$. Finally the solution is unique among all solutions such that $u^\epsilon(x,t)-P\cdot x$ and $m^\epsilon$ are $\epsilon\mathbb{Z}^n$ periodic.  \end{lemma}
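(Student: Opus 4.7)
\medskip

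\textbf{Proof plan.} The strategy is to reduce the problem on $\R^n$ to a periodic MFG on the torus $\eps\T^n$ by the substitution
\[
u^\eps(x,t) = P\cdot x + v^\eps(x,t),
\]
where $v^\eps$ is required to be $\eps\Z^n$-periodic. A direct computation shows that $(u^\eps,m^\eps)$ solves \eqref{system1p} if and only if $(v^\eps,m^\eps)$ solves the modified system
\[
\begin{cases}
-v^\eps_t-\eps\Delta v^\eps+\tfrac12|\nabla v^\eps+P|^2=V(x/\eps,m^\eps),\\
m^\eps_t-\eps\Delta m^\eps-{\rm div}(m^\eps(\nabla v^\eps+P))=0,\\
v^\eps(x,T)=0,\qquad m^\eps(x,0)=1.
\end{cases}
\]
Since $V(x/\eps,\cdot)$ is $\eps\Z^n$-periodic in $x$ (as $\eps=1/\kappa$ with $\kappa\in\N$) and the initial/terminal data are constant, this problem is naturally posed on $\eps\T^n$.

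\medskip

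First I would establish existence of a classical solution $(v^\eps,m^\eps)$ to the modified system on $\eps\T^n$. The argument of Proposition \ref{ex} carries over verbatim: apply the Cole--Hopf transformation $w^\eps:=\exp\{-v^\eps/(2\eps)\}$ modulated by the linear term $P\cdot x$, so that the first equation becomes linear (with a zeroth--order coefficient depending on $m^\eps$, $P$, and $V$) and one can run a Schauder-type fixed point argument on $m^\eps$ exactly as in \cite[Lemma 4.2]{cllp1}. The only difference compared to Proposition \ref{ex} is the presence of the additional first-order term $P\cdot\nabla v^\eps$ and the constant $|P|^2/2$ in the HJ equation, which do not affect the fixed-point scheme. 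Setting $u^\eps:=P\cdot x+v^\eps$ then yields a smooth solution to \eqref{system1p}; by construction $v^\eps=u^\eps-P\cdot x$ and $m^\eps$ are $\eps\Z^n$-periodic, and hence so is $\nabla u^\eps=P+\nabla v^\eps$.

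\medskip

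For uniqueness within the class of solutions with the stated periodicity, I would use the standard Lasry--Lions duality argument adapted to $\eps\T^n$. Given two such solutions $(u_1,m_1)$, $(u_2,m_2)$, the differences $\bar u:=u_1-u_2$ and $\bar m:=m_1-m_2$ are $\eps\Z^n$-periodic with $\bar u(\cdot,T)=0$ and $\bar m(\cdot,0)=0$. Computing
\[
\frac{d}{dt}\int_{\eps\T^n}\bar u\,\bar m\,dx,
\]
integrating over $[0,T]$, and exploiting the monotonicity of $V$ in $m$ (assumption \eqref{incr}) together with the identity
\[
(m_1-m_2)\bigl(\tfrac12|\nabla u_1|^2-\tfrac12|\nabla u_2|^2\bigr)-(\nabla u_1-\nabla u_2)\cdot(m_1\nabla u_1-m_2\nabla u_2)=-\tfrac{m_1+m_2}{2}|\nabla u_1-\nabla u_2|^2,
\]
one obtains
\[
\int_0^T\!\!\int_{\eps\T^n}\bigl(V(x/\eps,m_1)-V(x/\eps,m_2)\bigr)(m_1-m_2)\,dx\,dt+\tfrac12\!\int_0^T\!\!\int_{\eps\T^n}(m_1+m_2)|\nabla u_1-\nabla u_2|^2\,dx\,dt\le 0,
\]
which forces $m_1=m_2$ and then $u_1=u_2$.

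\medskip

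The main obstacle, although ultimately routine, is verifying that Proposition \ref{ex} (stated on $\T^n$) applies to the periodic problem on $\eps\T^n$ with the extra $P$-dependent terms; this amounts to checking that the Cole--Hopf linearization and the bootstrap regularity estimates go through unchanged, since the additional terms $P\cdot\nabla v^\eps$ and the constant $|P|^2/2$ introduce only bounded perturbations in the linearized equation and do not interfere with the fixed-point construction.
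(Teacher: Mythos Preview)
Your proposal is correct and follows essentially the same route as the paper: you make the substitution $u^\eps=P\cdot x+v^\eps$ (the paper writes $w^\eps$), reduce to a periodic MFG on $\eps\T^n$ with Hamiltonian $H^P(q)=\tfrac12|q+P|^2$, and invoke \cite[Lemma~4.2]{cllp1} for existence and uniqueness. The paper is more terse, citing that reference directly for both existence and uniqueness of the periodic auxiliary system, whereas you spell out the Cole--Hopf adaptation and the Lasry--Lions duality argument; the substance is the same.
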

\begin{proof}
Let $(w^\epsilon, m^\epsilon)$ be the unique $\epsilon\mathbb{Z}^n$ periodic smooth  solution to  the mean field game system
\begin{equation}\label{systemp}\begin{cases}
-w_t^\epsilon-\epsilon\Delta w^\epsilon +H^P(\nabla w^\epsilon)=V\left(\frac{x}{\epsilon}, m^\epsilon(x, t)\right), & x\in \mathbb{R}^n, t\in (0,T)\\
m^\epsilon_t-\epsilon \Delta m^\epsilon-{\rm div}(m^\epsilon \nabla{_P}  H^P(\nabla w^\epsilon))=0,& x\in \mathbb{R}^n, t\in (0,T)\\ w^\epsilon(x,T)=0\qquad m^\epsilon(x,0)=1
\end{cases}\end{equation} where \[H^P(q)=\frac{|q+P|^2}{2}.\] Note that $H^P$ is strictly convex and has superlinear growth,
moreover $V$ is monotone in the second argument, so \eqref{systemp} admits a unique $\epsilon\mathbb{Z}^n$ periodic solution (see \cite[Lemma 4.2]{cllp1}).

Define $u^\epsilon(x,t)= w^\epsilon(x,t)+P\cdot x$. Then $(u^\epsilon, m^\epsilon)$ is a smooth solution to \eqref{system1p}.  Finally $u^\epsilon(x,t)-P\cdot x$, $m^\epsilon(x,t)$ and  $\nabla u^\epsilon$ are $\epsilon\mathbb{Z}^n$ periodic with respect to $x$. 
\end{proof}

We recall the definition of $\bar H(P,\alpha)$ and $\bar b (P,\alpha)$ given respectively in Proposition \ref{cellprop} and in equation \eqref{bbar}. In particular $\bar H(P,1)$ is the  unique constant such that there exists a  solution $(u^1,m^1)$ to the cell system
  \begin{equation}\label{cell1} \begin{cases}-\Delta u^1 \!+\!\frac{1}{2}|\nabla u^1 \!+\!P|^2\!-\!V(y,   m^1 ) =
 \bar H(P,1)\\  -\Delta m^1 -{\rm div}\left(m^1 \left(\nabla u^1 +P\right)\right)=0,\\   \int_{\mathbb{T}^n} u^1=0 \qquad \int_{\mathbb{T}^n} m^1=1
\end{cases}  \end{equation} and  \[\bar b(P,1)=\int_{\mathbb{T}^n} (\nabla u^1+P)m^1(y)dy.\]
We recall that, by \cite[Thm 4.3, pag 136]{bbook}, $m^1>0$; in particular,  there exist constants $\delta>0$ and $K>0$ such that $0<\delta\leq m^1(y)\leq K$.

It is easy to check that $(u_0,m_0)$ with
\begin{equation}\label{u0m0} u^0(t,x)= P\cdot x +(t-T) \bar H(P,1)\qquad m^0(t,x)\equiv 1\end{equation} is a solution to the limit system
\begin{equation}\label{limitp}\begin{cases} -u_t+\bar H(\nabla u, m)=0&\qquad x\in\mathbb{R}^n,\, t\in(0,T)\\ m_t-{\rm  div }(\bar b(\nabla u, m) m)=0&\qquad x\in\mathbb{R}^n,\, t\in(0,T)\\ u(x,T)=P\cdot x, \ \ m(x,0)=1&\qquad x\in\mathbb{R}^n. \end{cases}\end{equation}
Let us now state our convergence result whose proof is postponed to section \ref{proofconv}.

\begin{theorem}\label{thm:conv}
Let $(u^\epsilon, m^\epsilon)$ be the solution of the mean field game system \eqref{system1p} defined in Lemma \ref{lemmaper}. Then for every compact $Q$ in $\mathbb{R}^n$, 
\begin{itemize} \item[i)] $u^\epsilon\to u^0$ in $L^2(Q\times[0,T])$ \item[ii)] $m^\epsilon\to m^0$ weakly in $L^1(Q\times[0,T])$ \end{itemize}
where $(u^0,m^0)$ is the solution to the effective problem \eqref{limitp} defined in \eqref{u0m0}.
\end{theorem}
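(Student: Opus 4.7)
The plan is to compare $(u^\eps,m^\eps)$ with the first-order asymptotic expansion from Section \ref{formalexp}. Let $(u^1,m^1)$ solve the cell problem \eqref{cell1} (corresponding to $\alpha=1$), and set
\[ w^\eps(x,t):=u^\eps(x,t)-u^0(x,t)-\eps u^1(x/\eps), \qquad \mu^\eps(x,t):=m^\eps(x,t)-m^1(x/\eps). \]
By Lemma \ref{lemmaper}, both $w^\eps$ and $\mu^\eps$ are $\eps\Z^n$-periodic in $x$, so the computations can be carried out on $\T^n$. Substituting the ansatz into \eqref{system1p} and using \eqref{cell1} to cancel the leading contributions, $(w^\eps,\mu^\eps)$ satisfies the coupled system
\[
\begin{cases}
-w^\eps_t-\eps\Delta w^\eps+\nabla w^\eps\cdot(P+\nabla u^1(x/\eps))+\tfrac12|\nabla w^\eps|^2=V(x/\eps,m^\eps)-V(x/\eps,m^1(x/\eps)),\\
\mu^\eps_t-\eps\Delta\mu^\eps-\textrm{div}(m^1(x/\eps)\nabla w^\eps)-\textrm{div}(\mu^\eps\nabla u^\eps)=0,
\end{cases}
\]
with $w^\eps(x,T)=-\eps u^1(x/\eps)$ (of size $O(\eps)$ in $L^\infty$) and $\mu^\eps(x,0)=1-m^1(x/\eps)$ (only $O(1)$ pointwise, but of zero mean).

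The heart of the argument is a Lasry--Lions-type energy identity. Testing the first equation against $\mu^\eps$, the second against $w^\eps$, subtracting and integrating over $\T^n\times[0,T]$, the second-order diffusion terms and the linear drift contributions in $(P+\nabla u^1)$ cancel, and combining the cubic terms yields
\[ \tfrac12\int_0^T\!\!\int_{\T^n}(m^\eps+m^1)|\nabla w^\eps|^2\,dxdt+\int_0^T\!\!\int_{\T^n}\bigl(V(x/\eps,m^\eps)-V(x/\eps,m^1)\bigr)(m^\eps-m^1)\,dxdt \]
\[ =\int_{\T^n}w^\eps(0)\mu^\eps(0)\,dx-\int_{\T^n}w^\eps(T)\mu^\eps(T)\,dx. \]
Using $m^1\geq\delta>0$ from Proposition \ref{cellprop} together with the monotonicity \eqref{incr} (and uniform $L^\infty$-bounds on $m^\eps$ obtained from classical parabolic regularity once $\nabla u^\eps$ is uniformly Lipschitz), the left-hand side dominates $\|\nabla w^\eps\|_{L^2(\T^n\times[0,T])}^2+\|\mu^\eps\|_{L^2(\T^n\times[0,T])}^2$.

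The principal obstacle is bounding the right-hand side by $O(\eps)$. The terminal contribution is immediately $O(\eps)$, since $|w^\eps(T)|\leq\eps\|u^1\|_\infty$ and $\|\mu^\eps(T)\|_{L^1}$ is bounded by mass conservation. The initial contribution $\int w^\eps(0)(1-m^1(x/\eps))\,dx$ is genuinely delicate because the oscillating factor is only $O(1)$ in $L^\infty$. To extract a gain of $\eps$ from the zero-mean property, I would solve $\textrm{div}_y F=1-m^1$ for a smooth periodic vector field $F\colon\T^n\to\R^n$ (solvable by Fredholm alternative since $\int(1-m^1)=0$), so that $1-m^1(x/\eps)=\eps\,\textrm{div}_x F(x/\eps)$, and integrate by parts to obtain $\int w^\eps(0)(1-m^1(x/\eps))\,dx=-\eps\int\nabla w^\eps(0)\cdot F(x/\eps)\,dx=O(\eps)$, provided $\|\nabla w^\eps(\cdot,0)\|_{L^1(\T^n)}$ is uniformly bounded. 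The latter reduces to a uniform Lipschitz estimate on $u^\eps$, which can be obtained by a Bernstein-type argument on the Hamilton--Jacobi equation, exploiting boundedness of $V$ and the affine terminal datum.

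From $\|\nabla w^\eps\|_{L^2}^2+\|\mu^\eps\|_{L^2}^2=O(\eps)$ one deduces both convergences. For $u^\eps\to u^0$ strongly in $L^2$, Poincar\'e on $\T^n$ yields $\|w^\eps(t)-\bar w^\eps(t)\|_{L^2(\T^n)}\leq C\|\nabla w^\eps(t)\|_{L^2(\T^n)}$, while the time-dependent spatial mean $\bar w^\eps(t)$ is controlled by integrating the $w^\eps$-equation over $\T^n$, invoking the already established energy bounds together with $\bar w^\eps(T)=0$ (valid because $u^1$ has zero mean on $\T^n$ and $1/\eps\in\N$). For the weak $L^p$ convergence $m^\eps\to m^0=1$, decompose $m^\eps-1=\mu^\eps+(m^1(x/\eps)-1)$: the first summand tends strongly to $0$ in $L^2$, and the second weakly to $0$ in any $L^p$ by oscillatory averaging (since $m^1-1$ has zero mean on $\T^n$). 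The exponent restriction $p<(n+2)/n$ reflects the sharp uniform parabolic $L^p$ estimates naturally available for the Fokker--Planck equation with bounded drift.
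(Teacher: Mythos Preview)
Your overall strategy---compare with the first-order expansion, derive the Lasry--Lions energy identity, and bound the boundary terms---matches the paper's. The genuine gaps lie in two unjustified a~priori estimates.

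\textbf{The initial boundary term.} Your $\textrm{div}\,F=1-m^1$ trick correctly produces a factor of $\eps$, but then you need $\|\nabla w^\eps(\cdot,0)\|_{L^1(\T^n)}$ bounded uniformly in $\eps$, which you reduce to a uniform Lipschitz bound on $u^\eps$ via Bernstein. This fails: differentiating the HJ equation in $x$ brings out $\nabla_x[V(x/\eps,m^\eps)]$, whose $y$-derivative contributes a term of order $1/\eps$, so the standard Bernstein maximum-principle argument gives no $\eps$-independent control. The paper avoids any Lipschitz estimate on $u^\eps$ by introducing an energy functional $E(v,n)$ (see \eqref{en}) which is \emph{exactly conserved} along the flow of the rescaled system \eqref{s1}; evaluating it at $t=T$ (where $v^\eps(\cdot,T)=-u^1$ is known) bounds it by a constant $C_P$, and evaluating at $t=0$ (where $n^\eps(\cdot,0)+m^1\equiv 1$) extracts the required bound $\|\nabla v^\eps(\cdot,0)\|_{L^2}\le C_P$. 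This conserved-energy idea (borrowed from \cite{cllp1}) is the key step your proposal is missing.

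\textbf{The $L^2$ bound on $\mu^\eps$.} You claim the monotonicity term dominates $\|\mu^\eps\|_{L^2}^2$, invoking \eqref{incr}. But \eqref{incr} gives $V_m\ge\gamma_K$ only on compact intervals $K$, and since $V$ is bounded one necessarily has $V_m(y,r)\to 0$ as $r\to\infty$. So this requires $m^\eps$ uniformly bounded in $L^\infty$, which you attribute to parabolic regularity given bounded drift. Even granting a uniform Lipschitz bound on $u^\eps$, the Fokker--Planck equation has \emph{vanishing} diffusion $\eps\Delta m^\eps$; an $L^\infty$ bound would need control of $\textrm{div}(\nabla u^\eps)=\Delta u^\eps$, which a Lipschitz bound does not provide. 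The paper never claims $\|n^\eps\|_{L^2}=O(\sqrt\eps)$: from the energy identity and strict monotonicity it only obtains $n^\eps\to 0$ a.e., and then invokes separate entropy-type estimates (bounding $\int_0^T\!\int|\nabla\sqrt{m^\eps}|^2$ as in \cite[Lemma~2.5]{cllp1}) to get uniform $L^{p'}$ bounds on $m^\eps$ for $p'\le(n+2)/n$, which combined with a.e.\ convergence yields strong $L^p$ convergence of $n^\eps$ for $p<(n+2)/n$. This is precisely why the exponent restriction appears; it does not follow if one had the $L^2$ estimate you assert.
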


\subsection{A priori estimates}

Let $(u^\epsilon, m^\epsilon)$ the solution to \eqref{system1p} given by Lemma \ref{lemmaper}. Consider the functions 
\[\begin{cases} v^\epsilon(y,t)=\frac{1}{\epsilon}u^\epsilon( \epsilon y, t)-   P\cdot y-\frac{1}{\epsilon} (t-T)\bar H(P, 1)-  u^1\left(y\right)\\
n^\epsilon( y,t)=m^\epsilon( \epsilon y, t)-m^1\left( y\right). \end{cases}\]

By Lemma \ref{lemmaper},  we get that $v^\epsilon, n^\epsilon$ are both $ \mathbb{Z}^n$-periodic and smooth functions.  It is a straightforward computation to check that they solve
\begin{equation} \label{s1} \begin{cases} -\epsilon v^\epsilon_t-  \Delta  v^\epsilon+H_P(y, \nabla v^\epsilon)= V\left(y, m^1+n^\epsilon\right)- V\left(y, m^1 \right)&\quad y\in\mathbb{T}^n,\,t\in(0,T)\\  
\epsilon n^\epsilon_t- \Delta n^\epsilon-{\rm div}(n^\epsilon \nabla_q H_P(y, \nabla v^\epsilon))={\rm div} (m^1 \nabla v^\epsilon) &\quad y\in\mathbb{T}^n,\,t\in(0,T)\\
v^\epsilon(y,T)=- u^1\left(y\right)\quad n^\epsilon(  y,0)=1-m^1\left(y\right)&\quad y\in\mathbb{T}^n  \end{cases}\end{equation}

where \begin{equation}\label{hp} H_P(y,q)=\frac{|q|^2}{2}+q\cdot(P+\nabla u^1(y)). \end{equation}

Note that system \eqref{s1} is not a mean field game system because of the presence of the term ${\rm div} (m^1 \nabla v^\epsilon)$. Nevertheless,  due to the divergence structure,  since $\int_{\mathbb{T}^n} n^\epsilon(y,0)dy=0$ and $n^\epsilon( \cdot,t)$, $\nabla v^\epsilon( \cdot,t)$,
$\nabla u^1(\cdot)$ are $\mathbb{Z}^n$-periodic,   we get that for every $t\in [0,T]$, \begin{equation}\label{nmediazero}\int_{\mathbb{T}^n} n^\epsilon ( y,t)dy=0.\end{equation}

\begin{lemma} \label{estimate1} For every $t_1, t_2\in [0,T]$ with $t_1\leq t_2$, we get
\begin{equation} \label{est1}
-\epsilon\left[ \int_{\mathbb{T}^n}  v^\epsilon n^\epsilon \right]_{t_1}^{t_2}= \int_{t_1}^{t_2}\int_{\mathbb{T}^n} \frac{2m^1+n^\epsilon}{2} |\nabla v^\epsilon|^2 + \left[V(y, m^1+n^\epsilon)-V(y,m^1)\right]n^\epsilon dy.\end{equation} \end{lemma}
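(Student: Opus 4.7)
The identity is exactly the kind of bilinear estimate that one obtains from the Lasry–Lions ``adjoint'' testing procedure: in a genuine MFG system, multiplying the HJ equation by the density and the Fokker–Planck equation by the value function and then subtracting produces a monotonicity-type inequality with the term $m|\nabla v|^2$ appearing on the right. Here, \eqref{s1} is not an MFG system because of the extra source ${\rm div}(m^1\nabla v^\eps)$ on the right-hand side of the $n^\eps$-equation, so the same computation should produce the standard MFG cancellations together with that additional term in the form $-m^1|\nabla v^\eps|^2$ after one integration by parts. The plan is therefore: multiply (i) in \eqref{s1} by $n^\eps$, multiply (ii) in \eqref{s1} by $v^\eps$, subtract the second identity from the first, and then integrate over $\T^n\times[t_1,t_2]$, using the $\Z^n$-periodicity of $v^\eps,n^\eps,u^1$ (and hence of $\nabla_q H_P(y,\nabla v^\eps)$) to discard boundary terms when integrating by parts in $y$.

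Three cancellations/rearrangements are to be tracked in order. First, the time derivatives combine as
\[
-\eps v^\eps_t n^\eps - \eps n^\eps_t v^\eps = -\eps\,\partial_t(v^\eps n^\eps),
\]
which upon integration in time produces exactly the left-hand side $-\eps\bigl[\int_{\T^n} v^\eps n^\eps\bigr]_{t_1}^{t_2}$. Second, the two Laplacian contributions $-\Delta v^\eps\cdot n^\eps$ and $+\Delta n^\eps\cdot v^\eps$ each integrate to $\pm\int_{\T^n}\nabla v^\eps\cdot\nabla n^\eps\,dy$ with opposite signs and so cancel. Third, the Hamiltonian terms $H_P(y,\nabla v^\eps)n^\eps$ and $+{\rm div}(n^\eps\nabla_q H_P(y,\nabla v^\eps))v^\eps$ combine via one integration by parts into $\int_{\T^n}n^\eps[H_P(y,\nabla v^\eps)-\nabla v^\eps\cdot\nabla_q H_P(y,\nabla v^\eps)]\,dy$; the explicit form \eqref{hp} gives the key pointwise identity
\[
H_P(y,q)-q\cdot\nabla_q H_P(y,q)=-\tfrac{1}{2}|q|^2,
\]
so this contribution equals $-\int_{\T^n} n^\eps|\nabla v^\eps|^2/2\,dy$.

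Finally, the right-hand sides of (i) and (ii) produce, after subtraction, the terms $[V(y,m^1+n^\eps)-V(y,m^1)]n^\eps$ and $-{\rm div}(m^1\nabla v^\eps)v^\eps$; integrating the latter by parts in $y$ (periodicity) gives $+\int_{\T^n}m^1|\nabla v^\eps|^2\,dy$. Moving the $-n^\eps|\nabla v^\eps|^2/2$ term from the left to the right and combining it with $m^1|\nabla v^\eps|^2$ produces the coefficient $(2m^1+n^\eps)/2$, which is exactly~\eqref{est1}.

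No genuine analytic obstacle is expected: Lemma~\ref{lemmaper} provides enough smoothness and periodicity for all integrations by parts to be justified, and the mean-zero property~\eqref{nmediazero} means that additive constants in $v^\eps$ would not affect the identity (but none arise anyway). The only point that requires mild care is making sure the subtraction (rather than the sum) of the two tested equations is used, so that the time derivatives assemble into the total derivative $\partial_t(v^\eps n^\eps)$ and the Laplacian cross-terms cancel; with the opposite sign convention one would instead obtain a less useful combination.
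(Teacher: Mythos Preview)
Your proposal is correct and follows essentially the same approach as the paper: multiply (i) in \eqref{s1} by $n^\eps$, (ii) by $v^\eps$, subtract, integrate over $\T^n\times[t_1,t_2]$, and use periodicity to handle the integrations by parts. Your write-up is in fact more detailed than the paper's, which simply records the subtracted integrand and invokes periodicity; your explicit tracking of the three cancellations (time derivatives, Laplacians, and the Hamiltonian identity $H_P-q\cdot\nabla_q H_P=-\tfrac12|q|^2$) together with the extra $m^1|\nabla v^\eps|^2$ term from the source ${\rm div}(m^1\nabla v^\eps)$ is exactly what is happening behind the paper's one-line computation.
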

\begin{proof}
We multiply the first equation in \eqref{s1} by $n^\epsilon$ and the second by $v^\epsilon$  subtract the first equation from the second,  and integrate in $\mathbb{T}^n\times [t_1,t_2]$:
\begin{eqnarray*} 0 &= &\int_{t_1}^{t_2} \int_{\mathbb{T}^n}-\epsilon( v^\epsilon n^\epsilon)_t-\Delta v^\epsilon n^\epsilon+\Delta n^\epsilon v^\epsilon+{\rm div} (n^\epsilon (\nabla v^\epsilon+P+\nabla u^1))v^\epsilon    \\& & +\int_{t_1}^{t_2} \int_{\mathbb{T}^n}n^\epsilon\left[\frac{|\nabla v^\epsilon|^2}{2}-\left(V(y, m^1+n^\epsilon)-V(y,m^1)\right) +\nabla v^\epsilon\cdot (P+\nabla u^1)\right]\\ && + \int_{t_1}^{t_2} \int_{\mathbb{T}^n}{\rm div} (m^1 \nabla v^\epsilon)v^\epsilon.\end{eqnarray*} 
Recalling the periodicity of $n^\epsilon, v^\epsilon, m^1$, we get  \eqref{est1}. \end{proof}

We define the following functional on $H^1(\mathbb{T}^n)\times H^1(\mathbb{T}^n) $
\begin{equation}\label{en} E(v,n)=\int_{\mathbb{T}^n} (n(y)+m^1(y)) H_P(y, \nabla v(y))+ \nabla v(y)\cdot (\nabla n(y)+\nabla m^1(y))-\Phi^{ 1}(y,n)dy
\end{equation}
where $(u^1, m^1)$ is the solution to the cell problem \eqref{cell1}, $H_P$ has been defined in \eqref{hp}, \begin{equation}\label{phi} \Phi^1(y,n)=\int_0^n V(y, s+m^1)-V(y,m^1)ds.\end{equation}  Note that  due to the fact that $V$ is increasing in the second variable, $\Phi^1\geq 0$.
\begin{lemma}\label{lemmaenergy}
Let $(v^\epsilon,n^\epsilon)$ the solution to \eqref{s1}. Then
  there exists a constant $C_P $  depending on $P$ (independent of $\epsilon$) such that  \[ E(v^\epsilon( \cdot, t), n^\epsilon(\cdot,t))\leq C_P\qquad \forall t\in [0,T].\]
Moreover   \begin{equation}\label{stimal2} \|\nabla v^\epsilon( \cdot,0)\|_{L^2(\mathbb{T}^n)}\leq C_P.\end{equation}
\end{lemma}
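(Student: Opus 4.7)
The plan is to show that $E(v^\eps(\cdot,t),n^\eps(\cdot,t))$ is \emph{conserved in time} along solutions of \eqref{s1}, and then to bound this conserved value by evaluating $E$ at $t=T$, where $v^\eps(\cdot,T)=-u^1$ is known explicitly and the cell problem \eqref{cell1} collapses the Hamiltonian into sign-definite pieces. The estimate \eqref{stimal2} will then follow by evaluating $E$ at $t=0$ (where $n^\eps(\cdot,0)=1-m^1$) and using coercivity.

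To establish $\tfrac{d}{dt}E\equiv 0$, I would differentiate under the integral and split the contributions into a $v^\eps_t$-part and an $n^\eps_t$-part. After integrating by parts in $y$ (periodicity kills the boundary terms), the $v^\eps_t$-part reads $-\int v^\eps_t\{{\rm div}((n^\eps+m^1)\nabla_q H_P)+\Delta(n^\eps+m^1)\}\,dy$. Expanding $m^1\nabla_q H_P=m^1\nabla v^\eps+m^1(P+\nabla u^1)$ and substituting ${\rm div}(m^1(P+\nabla u^1))=-\Delta m^1$ from \eqref{cell1}(ii), the bracket reduces to $\Delta n^\eps+{\rm div}(n^\eps\nabla_q H_P)+{\rm div}(m^1\nabla v^\eps)$, which is exactly $\eps n^\eps_t$ by \eqref{s1}(ii). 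Symmetrically the $n^\eps_t$-part reduces via \eqref{s1}(i) to $\eps\int v^\eps_t n^\eps_t\,dy$, and the two contributions cancel. This is the main obstacle: the system \eqref{s1} is \emph{not} of MFG type because of the inhomogeneous forcing ${\rm div}(m^1\nabla v^\eps)$, but the definition \eqref{en} of $E$ is engineered -- via the shift by $m^1$ inside $H_P$ and the linearisation $\Phi^1$ of the coupling -- precisely so that this forcing is absorbed by the cell equation for $m^1$.

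With $E$ conserved, I evaluate at $t=T$. From \eqref{cell1}(i), $\tfrac12|\nabla u^1+P|^2=\bar H(P,1)+V(y,m^1)+\Delta u^1$, so $H_P(y,-\nabla u^1)=\tfrac{|P|^2}{2}-\bar H(P,1)-V(y,m^1)-\Delta u^1$. Plugging into \eqref{en}, an integration by parts shows that the $-(n^\eps(\cdot,T)+m^1)\Delta u^1$ contribution cancels the cross term $-\int\nabla u^1\cdot\nabla(n^\eps(\cdot,T)+m^1)\,dy$; combined with mass conservation $\int(n^\eps+m^1)\,dy=1$, this yields
\[E(v^\eps(\cdot,T),n^\eps(\cdot,T))=\tfrac{|P|^2}{2}-\bar H(P,1)-\int_{\T^n}(n^\eps(\cdot,T)+m^1)V(y,m^1)\,dy-\int_{\T^n}\Phi^1(y,n^\eps(\cdot,T))\,dy.\]
Since $V\geq 0$, $n^\eps+m^1=m^\eps\geq 0$, and $\Phi^1\geq 0$, the last two integrals are nonpositive; hence $E\leq\tfrac{|P|^2}{2}-\bar H(P,1)=:C_P$, a bound inherited at every $t\in[0,T]$.

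Finally, to derive \eqref{stimal2}, I evaluate $E$ at $t=0$: since $n^\eps(\cdot,0)+m^1\equiv 1$, the cross term vanishes and $E(v^\eps(\cdot,0),1-m^1)=\int H_P(y,\nabla v^\eps(\cdot,0))\,dy-\int\Phi^1(y,1-m^1)\,dy$. Expanding $H_P$, the $P$-linear term vanishes by periodicity ($\int\nabla v^\eps(\cdot,0)\,dy=0$), and Cauchy--Schwarz plus Young on $\int\nabla v^\eps(\cdot,0)\cdot\nabla u^1\,dy$ gives the coercive lower bound $E(v^\eps(\cdot,0),1-m^1)\geq\tfrac14\|\nabla v^\eps(\cdot,0)\|_{L^2(\T^n)}^2-C_P'$, with $C_P'$ depending only on $\|\nabla u^1\|_{L^2}$ and $\int|\Phi^1(y,1-m^1)|\,dy$. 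Combined with the upper bound $E\leq C_P$, this yields \eqref{stimal2}.
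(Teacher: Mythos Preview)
Your proof is correct and follows the same three-step architecture as the paper: conservation of $E$ along the flow of \eqref{s1}, an upper bound obtained at $t=T$ where $v^\eps=-u^1$, and coercivity at $t=0$ where $n^\eps+m^1\equiv 1$. The only notable difference lies in the second step: the paper drops $\Phi^1\geq 0$ and then bounds $\Delta u^1+H_P(y,-\nabla u^1)\leq \Delta u^1+\tfrac{|P|^2}{2}$ pointwise via Young's inequality, arriving at $C_P=\|u^1\|_{C^2}+\tfrac{|P|^2}{2}$; you instead substitute the cell equation \eqref{cell1}(i) to obtain the exact identity for $E$ at $t=T$ and then exploit $V\geq 0$, $\Phi^1\geq 0$, yielding the sharper constant $C_P=\tfrac{|P|^2}{2}-\bar H(P,1)$ (which, by the coercivity bound in Proposition~\ref{4.2}, is even uniformly bounded in $P$ by $\|V\|_\infty$). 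Both routes are valid; yours extracts slightly more from the structure of the cell problem.
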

\begin{proof} First of all we compute
\begin{multline*}
\frac{d}{dt} E(v^\epsilon( \cdot,t), n^\epsilon( \cdot,t))= \int_{\mathbb{T}^n}  n^\epsilon_t   H_P(y, \nabla v^\epsilon)+ (n^\epsilon  +m^1 ) \nabla_q H_P(y, \nabla v^\epsilon ) \nabla v^\epsilon_t dy+\\+\int_{\mathbb{T}^n}   \nabla v^\epsilon_t \cdot (\nabla n^\epsilon +\nabla m^1 )+ \nabla v^\epsilon  \cdot  \nabla n^\epsilon_t  -\Phi^{ 1}_n(y,n^\epsilon) n^\epsilon_t dy;\end{multline*} 
integrating by parts and recalling \eqref{s1} and \eqref{cell1}, we get    
\begin{eqnarray*}\frac{d}{dt} E(v^\epsilon( \cdot,t), n^\epsilon( \cdot,t))&=& \int_{\mathbb{T}^n}  n^\epsilon_t \left[H_P(y, \nabla v^\epsilon( y,t))-\Delta v^\epsilon -V(y, n^\epsilon+m^1)+V(y, m^1)\right]\\&&+
\int_{\mathbb{T}^n}  v^\epsilon_t\left[-{\rm div}( n^\epsilon \nabla_q H_P(y, \nabla v^\epsilon)-{\rm div}( m^1 \nabla v^\epsilon )-\Delta n^\epsilon  \right] \\&&+\int_{\mathbb{T}^n}  v^\epsilon_t\left[-\Delta m^1-{\rm div} (m^1(P+\nabla u^1))\right] =\int_{\mathbb{T}^n} \epsilon  v^\epsilon_t n^\epsilon_t-\epsilon v^\epsilon_t n^\epsilon_t\\&=&0. \end{eqnarray*}
Therefore for every $t\in[0,T]$
\[ E(v^\epsilon( \cdot,t), n^\epsilon( \cdot,t))\equiv M_T\qquad \forall t\in[0, T].\] 
Then, recalling that $\Phi^1\geq 0$ and  $v^\epsilon(y,T)=-u^1(y)$ \[M_T= E(v^\epsilon( \cdot,T), n^\epsilon( \cdot,T))\leq  \int_{\mathbb{T}^n} (\Delta u^1+H_P(y, -\nabla u^1)) (n^\epsilon( y,T)+m^1(y)).\] By definition  of $H_P$ in \eqref{hp} and by the fact that $n^\epsilon+m^1\geq 0$ with  $\int_{\mathbb{T}^n} n^\epsilon(y,T)+m^1(y)=1$ (by \eqref{nmediazero}) 
\begin{eqnarray*} M_T&\leq&  \int_{\mathbb{T}^n}
\left(\Delta u^1-\nabla u^1\cdot P-\frac{|\nabla u^1|^2}{2}\right)(n^\epsilon( y,T)+m^1(y))\\&\leq&  \int_{\mathbb{T}^n} \left(\Delta u^1+\frac{|P|^2}{2}\right)(n^\epsilon( y,T)+m^1(y))dy\leq \|u^1\|_{C^2}+\frac{|P|^2}{2}:=C_P\end{eqnarray*}
which amounts to the first part of the statement. Let us now pass to prove \eqref{stimal2}. Recalling that $n^\epsilon(0, y)+m^1(y)\equiv 1$,
\begin{eqnarray*} C_P&\geq&  E(v^\epsilon(0,\cdot), n^\epsilon(0,\cdot))=\int_{\mathbb{T}^n} H_P(y, \nabla v^\epsilon(0,y) )- \Phi^1(y, 1-m^1(y))\\ &\geq &\int_{\mathbb{T}^n} \frac{1}{4} |\nabla v^\epsilon(0,y)|^2 - |P+\nabla u^1|^2- \Phi^1(y, 1-m^1(y)) dy\end{eqnarray*}
where the last inequality is a consequence of the Cauchy-Schwartz inequality.
Note that by definition \eqref{phi} we have that \[ \Phi^1(y, 1-m^1(y))=\int_0^{1-m^1(y)}  V(y,s+m^1)- V(y, m^1) ds\leq (V(y,1)-V(y, m^1))(1-m^1) . \]
Then,
\[\int_{\mathbb{T}^n} |\nabla v^\epsilon(0,y)|^2\leq 4 C_P+ 4\int_{T^n} \left[|P+\nabla u^1|^2+(V(y,1)-V(y, m^1))(1-m^1)\right]dy, \]
 which  implies that
$|\nabla v^\epsilon(0, \cdot)|$ is bounded in $L^2(\mathbb{T}^n)$.
\end{proof}
We are ready to prove the main lemma on a priori bounds.
\begin{lemma}\label{lemmaconv} There exists a constant $C_P$ depending on $P$ such that
\[\int_{0}^{T}\int_{\mathbb{T}^n} \frac{2m^1+n^\epsilon}{2} |\nabla v^\epsilon|^2 + \left[V(y, m^1+n^\epsilon)-V(y,m^1)\right]n^\epsilon dy\leq C_P\epsilon.\]
Moreover \[\lim_{\epsilon\to 0}\int_{0}^{T}\int_{\mathbb{T}^n}   |\nabla v^\epsilon(y,t)|^2dydt =0\qquad \textrm{and }\qquad  \lim_{\epsilon\to 0} \int_{0}^{T}\int_{\mathbb{T}^n}   |n^\epsilon(y,t)|dydt= 0. \]
\end{lemma}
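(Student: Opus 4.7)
The plan is to apply Lemma \ref{estimate1} on the full interval $[0,T]$. This rewrites the dissipation integral in the statement as $-\eps$ times the boundary term $\bigl[\int_{\T^n} v^\eps n^\eps\,dy\bigr]_{0}^{T}$. At both endpoints exactly one of the two factors in the product is explicit: $v^\eps(\cdot,T)=-u^1$ is smooth and bounded, while $n^\eps(\cdot,0)=1-m^1$ is smooth, bounded, and has zero mean. This asymmetry, combined with the energy estimates in Lemma \ref{lemmaenergy}, should produce an $O(1)$ bound on each boundary term, and hence the desired $O(\eps)$ bound.

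\textbf{Controlling the two boundary terms.} At $t=T$, I use that $n^\eps(\cdot,T)$ has zero mean by \eqref{nmediazero} and that $n^\eps(\cdot,T)\ge -m^1$ (because $m^\eps\ge 0$), so $\int_{\T^n}|n^\eps(\cdot,T)|\,dy\le 2\int_{\T^n}m^1\,dy=2$. Hence $|\int v^\eps(\cdot,T)n^\eps(\cdot,T)\,dy|\le 2\|u^1\|_\infty$. At $t=0$, I use that $n^\eps(\cdot,0)=1-m^1$ has zero mean, so $v^\eps(\cdot,0)$ may be replaced by its deviation from its average $\bar v^\eps$ without changing the integral. The Poincaré--Wirtinger inequality on $\T^n$ and the bound \eqref{stimal2} give
\[\Bigl|\int_{\T^n}v^\eps(\cdot,0)(1-m^1)\,dy\Bigr|\le C\,\|\nabla v^\eps(\cdot,0)\|_{L^2}\,\|1-m^1\|_{L^2}\le C_P.\]
Inserting both estimates into \eqref{est1} with $t_1=0,\;t_2=T$ yields the first assertion.

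\textbf{Consequences.} For the gradient, since $(n^\eps+m^1)(y,t)=m^\eps(\eps y,t)\ge 0$ and $m^1\ge\delta>0$ by Proposition \ref{cellprop}, one has $2m^1+n^\eps\ge m^1\ge\delta$, so
\[\frac{\delta}{2}\int_0^T\!\!\int_{\T^n}|\nabla v^\eps|^2\,dy\,dt\le C_P\eps\;\longrightarrow\;0.\]
For the pointwise convergence $n^\eps\to 0$, I exploit that the second (nonnegative) integrand $[V(y,m^1+n^\eps)-V(y,m^1)]n^\eps$ tends to $0$ in $L^1(\T^n\times(0,T))$, hence to $0$ a.e. along a subsequence of $\eps=1/k$. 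The local strict monotonicity \eqref{incr} of $V$ then forces $n^\eps(y,t)\to 0$ at those points.

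\textbf{Main obstacle.} The delicate point is the last step, because \eqref{incr} supplies a positive $\gamma_K$ only on compact intervals, and I do not yet have an a priori $L^\infty$-bound for $\{m^\eps\}$ uniform in $\eps$. I expect to handle this by truncation: restrict the analysis to the sets $\{|n^\eps|\le M\}$, on which $[V(y,m^1+n^\eps)-V(y,m^1)]n^\eps\ge \gamma_{K_M}(n^\eps)^2$, obtain $L^2$-smallness of the truncated $n^\eps$, and finally let $M\to\infty$ via a diagonal argument. Alternatively, a maximum-principle bound on the Fokker--Planck equation (using the gradient estimate just obtained to tame the drift $\nabla u^\eps$) would bypass the truncation altogether.
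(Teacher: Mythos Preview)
Your proposal is correct and follows essentially the same route as the paper: apply Lemma~\ref{estimate1} on $[0,T]$, bound the $t=0$ boundary term via Poincar\'e--Wirtinger and \eqref{stimal2}, bound the $t=T$ term using that $n^\eps+m^1\ge 0$ has unit mass, then exploit $2m^1+n^\eps\ge m^1\ge\delta$ for the gradient and monotonicity of $V$ for the a.e.\ convergence.

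The ``main obstacle'' you raise is not a genuine one, and neither truncation nor a maximum-principle argument is needed. The point is that $n^\eps\ge -m^1\ge -\|m^1\|_\infty$ is bounded \emph{below}, while for $n\ge 1$ the monotonicity of $V$ gives
\[
\bigl[V(y,m^1+n)-V(y,m^1)\bigr]\,n \;\ge\; \bigl[V(y,m^1+1)-V(y,m^1)\bigr]\,n \;\ge\; c_0\,n,
\]
with $c_0:=\gamma_K>0$ coming from \eqref{incr} applied on the fixed compact $K=[0,\|m^1\|_\infty+1]$. Hence the nonnegative function $g(y,n):=[V(y,m^1+n)-V(y,m^1)]\,n$ satisfies $\inf\{g(y,n):|n|\ge\eta,\ n\ge -\|m^1\|_\infty\}>0$ for every $\eta>0$, uniformly in $y\in\T^n$. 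Since $g(\cdot,n^\eps)\to 0$ in $L^1$ (hence a.e.\ along a subsequence), this directly forces $n^\eps\to 0$ a.e.\ along that subsequence; the paper's one-line justification is exactly this, stated tersely.
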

\begin{proof}
By the initial/terminal data of system \eqref{s1} and since $\int_{\mathbb{T}^n}m^1=1$, we rewrite \begin{eqnarray*}  \left[ \int_{\mathbb{T}^n}  v^\epsilon n^\epsilon \right]_{0}^{T} &=&   \int_{\mathbb{T}^n} v^\epsilon(0,y)(m^1(y)-1)dy- \int_{\mathbb{T}^n} n^\epsilon(T,y)u^1(y)\\ &= &   \int_{\mathbb{T}^n} (v^\epsilon(0,y)-\int_{\mathbb{T}^n}
v^\epsilon(0,z)dz)(m^1(y)-1)dy\\\ && -  \int_{\mathbb{T}^n} (n^\epsilon(T,y)+m^1(y))u^1(y)+  \int_{\mathbb{T}^n} m^1(y)u^1(y). \end{eqnarray*} So using Cauchy-Schwarz and  Poincar\'e  inequalities in the first term, we get
\begin{eqnarray*} \left| \left[ \int_{\mathbb{T}^n}  v^\epsilon n^\epsilon \right]_{0}^{T} \right| &\leq &   \left( \int_{\mathbb{T}^n} \left(v^\epsilon(0,y)-\int_{\mathbb{T}^n}
v^\epsilon(0,z)dz\right)^2 dy \right)^{1/2} \left( \int_{\mathbb{T}^n}((m^1(y))^2-1)dy\right)^{1/2}\\\ && + \int_{\mathbb{T}^n} (n^\epsilon(T,y)+m^1(y))|u^1(y)| dy+  \int_{\mathbb{T}^n} m^1(y)|u^1(y)|dy\\  & \leq & C \|\nabla v^\epsilon(0,\cdot)\|_{L^2(\mathbb{T}^n)}+ \|u^1\|_\infty   \int_{\mathbb{T}^n} (n^\epsilon(T,y)+m^1(y))dy+ \|u^1\|_\infty\|m^1\|_\infty \\ &=& C \|\nabla v^\epsilon(0,\cdot)\|_{L^2(\mathbb{T}^n)}+  K_P, \end{eqnarray*}  where $K_P=\|u^1\|_\infty(1+\|m^1\|_\infty)$.
So, by Lemma \ref{estimate1} and by \eqref{stimal2}, we get that
\begin{multline}\label{estimateint}\int_{0}^{T}\int_{\mathbb{T}^n} \frac{2m^1+n^\epsilon}{2} |\nabla v^\epsilon|^2 + \left[V(y, m^1+n^\epsilon)-V(y,m^1)\right]n^\epsilon dy \\ =-\epsilon\left[ \int_{\mathbb{T}^n}  v^\epsilon n^\epsilon \right]_{0}^{T}\leq C_P\epsilon \end{multline} and in particular, 
by the monotonicity of $V$,
\[\int_{0}^{T}\int_{\mathbb{T}^n} \frac{2m^1+n^\epsilon}{2} |\nabla v^\epsilon|^2 \leq C_P\epsilon. \] Now we recall that $m^\epsilon(t,\epsilon y)=m^1(y)+n^\epsilon(t,y)\geq 0$ for every $t\in [0,T]$ and $y\in \mathbb{T}^n$, moreover $m^1\geq \delta>0$. So we get that \[\int_{0}^{T}\int_{\mathbb{T}^n}   |\nabla v^\epsilon|^2 \leq \frac{C_P}{\delta}\epsilon. \]
Still by \eqref{estimateint} we get  \[\lim_{\epsilon\to 0}\int_{0}^{T}\int_{\mathbb{T}^n} (V(y, m^1+n^\epsilon)-V(y,m^1))n^\epsilon=0.\] 
For every $K>0$, we write 
\begin{eqnarray} \nonumber  O(\epsilon)&=& \int_{0}^{T}\int_{\mathbb{T}^n} (V(y, m^1+n^\epsilon)-V(y,m^1))n^\epsilon\\ &=& \int_{0}^{T}\int_{\mathbb{T}^n}  1_{\{|n^\epsilon|\leq K\}} (V(y, m^1+n^\epsilon)-V(y,m^1))n^\epsilon
\nonumber \\ && +  \int_{0}^{T}\int_{\mathbb{T}^n}  1_{\{|n^\epsilon|\geq K\}} (V(y, m^1+n^\epsilon)-V(y,m^1))n^\epsilon.   \nonumber\end{eqnarray} 
Taking advantage of  the strict monotonicity  \eqref{incr}, we have that  
$(V(y, m^1+n^\epsilon)-V(y,m^1))n^\epsilon\geq 0$, and also that there exists  $C$ depending on $K$ and $\|m^1\|_\infty$ such that 
\begin{eqnarray}\nonumber  O(\epsilon)  &=&
 \int_{0}^{T}\int_{\mathbb{T}^n}  1_{\{|n^\epsilon|\leq K\}} (V(y, m^1+n^\epsilon)-V(y,m^1))n^\epsilon  \\ 
&\geq &  C \int_{0}^{T}\int_{\mathbb{T}^n}  1_{\{|n^\epsilon|\leq K\}} |n^\epsilon|^2\geq \frac{ C}{T} \left( \int_{0}^{T}\int_{\mathbb{T}^n}  1_{\{|n^\epsilon|\leq K\}} |n^\epsilon|\right)^2.\label{tre} \end{eqnarray}
Still by \eqref{incr}, possibly increasing  $C$, there holds 
\[|V(y, n^\epsilon+m^1)-V(y,   m ^1)|>C \qquad{\rm on\ } 
\{y\in \mathbb{T}^n:\ |n^\epsilon|\geq K\}.\] 
Hence 
\begin{eqnarray} \nonumber
O(\epsilon) &= &\int_{0}^{T}\int_{\mathbb{T}^n}  1_{\{|n^\epsilon|\geq K\}} (V(y, m^1+n^\epsilon)-V(y,m^1))n^\epsilon\\ 
&\geq & C \int_{0}^{T}\int_{\mathbb{T}^n}  1_{\{|n^\epsilon|\geq K\}} |n^\epsilon|.\label{dueprimo}
 \end{eqnarray} In conclusion, by  \eqref{tre} and \eqref{dueprimo}, we get  $n^\epsilon\to 0$ in $L^1 (\mathbb{T}^n\times [0,T])$. 
\end{proof}

\subsection{Proof of Theorem \ref{thm:conv}}\label{proofconv}
We start proving the convergence of $m^\epsilon$ stated in point (ii).
We observe that, by  Lemma \ref{lemmaconv} , we have that
\begin{equation*}
\lim_{\epsilon\to 0}\int_0^T\int_{\mathbb{T}^n}|m^\epsilon(\epsilon y,t) -m^1(y)| dy dt=0.
\end{equation*}
By the change of variables $z=\epsilon y$, we get
\begin{equation*}
\lim_{\epsilon\to 0}\epsilon^{-n}\int_0^T\int_{\epsilon\mathbb{T}^n}|m^\epsilon(z,t) -m^1(z/\epsilon)| dz dt=0.
\end{equation*}
Fix a compact $Q$ in $\mathbb{R}^n$; then there exists a constant $K_Q\in \mathbb{N}$ such that $Q\subset K_Q [0,1]^n$.  So, using periodicity, we have
\begin{eqnarray*}
\int_0^T\int_{Q}|m^\epsilon(z,t) -m^1(z/\epsilon)| dz dt&\leq&
\int_0^T\int_{K_Q[0,1]^n}|m^\epsilon(z,t) -m^1(z/\epsilon)| dz dt=o(1).
\end{eqnarray*}
On the other hand, by weak convergence of a periodic function to its mean (see \cite{luk}), $m^1(x/\epsilon)$ weakly converge to $1$ in $L^1_{\text{loc}}(\mathbb{R}^n)$. Hence, point (ii) of the statement is completely proved.

Now we pass to the proof of i). Integrating on $(0,T)\times \mathbb{T}^n$ the first equation in \eqref{systemp}, we get
\begin{multline}\label{t1}
\int_{\mathbb{T}^n}w^\epsilon (\epsilon x,t)dx +\int_t^T\int_{\mathbb{T}^n} \frac12 |\nabla w^\epsilon (\epsilon x,s)+P|^2dxds  \\-\int_t^T\int_{\mathbb{T}^n} V(x,m^\epsilon(\epsilon x,s))dxds=0.
\end{multline}
Since, by Lemma~\ref{lemmaconv}, there holds
\[
\int_0^T\int_{\mathbb{T}^n} |(\nabla w^\epsilon (\epsilon x,t)+P)-(\nabla u^1(x)+P)|^2 dxdt \leq C\epsilon,
\]
the second term in \eqref{t1} converges to $\int_0^T\int_{\mathbb{T}^n} \frac12|\nabla u^1(x)+P|^2$. On the other hand, we claim that
\[
 V(x,m^\epsilon(\epsilon x,t))\to   V(x,m^1(x))\quad \text{in }L^1([0,T]\times \mathbb{T}^n).
\]
Indeed, following a similar argument as in \cite[Thm2.1]{cllp1}, we get that, calling $L$ the Lipschitz constant of $V$ in $\mathbb{T}^n\times[0, \|m^1\|_\infty+1]$,
\begin{eqnarray*}
&& \int_0^T\int_{\mathbb{T}^n} |V(x,m^\epsilon(\epsilon x,t))-V(x,m^1(x))|dxdt\\
&\leq& \int_0^T\int_{\{\ m^\epsilon(\epsilon x,t)>\|m^1\|_\infty+1\}}\!\! |V(x,m^\epsilon(\epsilon x,t))-V(x,m^1(x))|dxdt
+ L \int_0^T\int_{\mathbb{T}^n} |m^\epsilon(\epsilon x,t)-m^1(x)|dxdt \\
&\leq& \int_0^T\int_{\{\ m^\epsilon(\epsilon x,t)>m^1(x)+1\}}\!\! (V(x,m^\epsilon(\epsilon x,t))-V(x,m^1(x)))(m^\epsilon(\epsilon x,t)-m^1(x))dxdt\\ &&+ L \int_0^T\int_{\mathbb{T}^n} |m^\epsilon(\epsilon x,t)-m^1(x)|dxdt 
\\
&\leq& C_P \epsilon +L \int_0^T\int_{\mathbb{T}^n} |m^\epsilon(\epsilon x,t)-m^1(x)|dxdt=C_P \epsilon +L \int_0^T\int_{\mathbb{T}^n} |n^\epsilon(x,t)|dxdt
\end{eqnarray*}   where the last inequality is due to Lemma~\ref{lemmaconv}.  Again by   Lemma~\ref{lemmaconv},  we get that the right hand side converges to $0$ as $\epsilon\to 0$. Hence, our claim is completely proved.

So, by the definition of $w^\epsilon$ and of $\bar H (P,1)$,  we get from \eqref{t1} that, for every $t\in [0,T]$ there holds
\begin{eqnarray*} 
\lim_{\epsilon\to 0} \int_{\mathbb{T}^n}( u^\epsilon (\epsilon x,t)-\epsilon P\cdot x) dx &= & \int_t^T\int_{\mathbb{T}^n}\left[ -\frac12|\nabla u^1(x)+P|^2+V(x,m^1(x))\right]dxdt
\\&=& (t-T) \bar H(P,1).
\end{eqnarray*}

This implies that the function $\bar v^\epsilon(t):= \int_{\mathbb{T}^n} v^\epsilon(x,t)dx$ satisfies
\[\lim_{\epsilon\to 0}\epsilon\bar v^\epsilon(t)= \lim_{\epsilon\to 0} \int_{\mathbb{T}^n}[ u^\epsilon (\epsilon x,t)-\epsilon P\cdot x -(t-T)\bar H(P,1)-\epsilon u^1(x)]dx=0.  \]  Note that since all the previous estimates are independent of $t$, the convergence is also uniform for $t\in[0,T]$.
By  Poincar\'e inequality, recalling Lemma \ref{lemmaconv}, we get
 \[  \int_0^T \int_{\mathbb{T}^n} |v^\epsilon(x,t)-\bar v^\epsilon|^2 \leq C \int_0^T \int_{\mathbb{T}^n} |\nabla v^\epsilon|^2\leq C_P\epsilon .\] So, we get that
$ v^\epsilon(x,t)-\bar v^\epsilon(t)\to 0$ in $L^2(\mathbb{T}^n\times[0,T])$.
In particular the last two relations imply that $\epsilon v^\epsilon(x,t) \to 0$ in $L^2(\mathbb{T}^n\times[0,T])$, namely \[
\lim_{\epsilon\to 0}\int_0^T\int_{\mathbb{T}^n}| u^\epsilon (\epsilon x,t)-\epsilon P\cdot x-(t-T) \bar H(P,1)-\epsilon u^1(x)|^2dx =0.\] Since $\epsilon u^1\to 0$ in $L^2$ we get that  \[
\lim_{\epsilon\to 0}\int_0^T\int_{\mathbb{T}^n}| u^\epsilon (\epsilon x,t)-\epsilon P\cdot x-(t-T) \bar H(P,1)|^2dx =0.\]
Performing the change of variables $z=\epsilon x$ and choosing the constant $K_Q$ as before we get that
\[\lim_{\epsilon\to 0} K_Q^n \int_0^T\int_{Q}| u^\epsilon (z,t)- P\cdot z-(t-T) \bar H(P,1)|^2dz =0 \] which implies the desired result.

\subsection{Final remark on the convergence in the general case}\label{sect:general}

Here we briefly sketch some arguments for extending the convergence result beyond affine data, but still under restrictive assumptions; this issue will be addressed in a future work. 
Assume that 
\begin{itemize} \item   \eqref{systemeffintro} has a classical solution (this could  hold in a small time interval for regular data),
\item the solution to  \eqref{ucor} has a regular dependence  on  $P$ and $\alpha$,
\item the data are well-prepared: they agree with those of the asymptotic expansions up to a sufficiently high order.\end{itemize}
Let $(u(\cdot; x,t),m(\cdot; x,t))$ be the  solution to \eqref{ucor} with
$(P,\alpha)=(\nabla u^0(x,t), m^0(x,t))$ and let  $ m^2(\cdot; x,t)$  be the solution to \eqref{m2}. 
We put $\epsilon=\frac{1}{k}$, with $k\in \mathbb{N}$.
We define the error \[\begin{cases} v^\epsilon(x,t)=u^\epsilon(x,t)-u^0(x,t)-\epsilon u \left(\frac{x}{\epsilon};  x,t\right)=u^\epsilon(x,t)-\bar {u}^\epsilon(x,t),\\
n^\epsilon(x,t)=m^\epsilon(x,t)-m^0(x,t)\left(m \left(\frac{x}{\epsilon}; x,t\right)+\epsilon m^2\left(\frac{x}{\epsilon}; x,t\right)\right)=m^\epsilon(x,t)-\bar {m}^\epsilon(x,t).\end{cases}\]
The assumption that the data are  well-prepared reads: $ v^\epsilon(x,T)=0$ and $n^\epsilon(x,0)=0$.  This may be relaxed slightly using the alternative expansion in \ref{timedep}. 

For $(u^0,m^0)$ and $(u,m)$, $m^2$ regular, as in
Section \ref{formalexp} we get 
$v^\epsilon, n^\epsilon$ satisfy
\[\begin{cases} -v^\epsilon_t-\epsilon\Delta v^\epsilon+ \frac{1}{2}|\nabla v^\epsilon|^2+\nabla \bar {u}^\epsilon\cdot \nabla v^\epsilon- V(\frac{x}{\epsilon}, n^\epsilon+\bar  m ^\epsilon)+V(\frac{x}{\epsilon}, \bar  m ^\epsilon)= R_1^\epsilon \\
n^\epsilon_t{-\epsilon \Delta n^\epsilon}{-}{\rm div}\left(n^\epsilon(\nabla \bar {u}^\epsilon)\right){-}{\rm div}\left(m^\epsilon(\nabla v^\epsilon)\right)= R_2^\epsilon\end{cases}
\]where  $R_i^\epsilon\to 0$ uniformly as $\epsilon\to 0,$ $i\in \{1,2\}$. 
We multiply  the first equation by $n^\epsilon$ and the second by $v^\epsilon$, integrate on $\mathbb{T}^n\times[0, T]$ and subtract one equation from the other. 
Taking into account that the data are well prepared and 
$\frac{1}{2}n^\epsilon-m^\epsilon=-\frac{1}{2}(m^\epsilon+\bar  m ^\epsilon)<0$,  we get 
\begin{equation}\label{finalest}
\int_0^T\int_{\mathbb{T}^n}\frac{1}{2}(m^\epsilon+\bar  m ^\epsilon)|\nabla v^\epsilon|^2+\left[V\left(\frac{x}{\epsilon}, n^\epsilon+\bar  m ^\epsilon\right)-V\left(\frac{x}{\epsilon}, \bar  m ^\epsilon\right)\right] n^\epsilon dxdt=O(\epsilon).
\end{equation}

Since $\bar  m^\epsilon$ is bounded away from zero and uniformly bounded by our assumptions,  and $m^\epsilon\geq 0$, we can argue as in Lemma \ref{lemmaconv}. Then 
we get that $|\nabla v^\epsilon|\to 0$ in $L^2(\mathbb{T}^n\times [0, T])$. This (together with the  Poincar\'e inequality)  implies that  $v^\epsilon\to  0$ in  $L^2(\mathbb{T}^n\times [0, T])$ and 
moreover  $n^\epsilon\to 0$ in $L^1(\mathbb{T}^n\times [0, T])$. In conclusion  
 $u^\epsilon\to u^0$ in $L^2(\mathbb{T}^n\times [0, T])$ and
%
$m^\epsilon\to m^0$ weakly in $L^1(\mathbb{T}^n\times [0, T])$. 

Let us stress that even in this case,  the oscillations at highest order of $\bar {m}^\epsilon$ prevent to expect any stronger convergence of $m^\epsilon$ to $m^0.$

\end{document}